\newtheorem{theorem}{Theorem}[section]
\newtheorem{definition}[theorem]{Definition}
\newtheorem{proposition}[theorem]{Proposition}
\newtheorem{lemma}[theorem]{Lemma}
\newtheorem{corollary}[theorem]{Corollary}
\newtheorem{question}[theorem]{Question}
\theoremstyle{remark}
\newtheorem{remark}[theorem]{Remark}
\begin{document}

\title{Matrix models for noncommutative algebraic manifolds}

\author{Teodor Banica}
\address{T.B.: Department of Mathematics, Cergy-Pontoise University, 95000 Cergy-Pontoise, France.}
\email{{\tt teodor.banica@u-cergy.fr}}

\author{Julien Bichon}
\address{J.B.: Laboratoire de Math\'ematiques, Universit\'e Blaise Pascal, Campus universitaire des C\'ezeaux, 3 place Vasarely,
63178 Aubi\`ere Cedex, France.} \email{{\tt Julien.Bichon@math.univ-bpclermont.fr}}

\subjclass[2010]{46L89, 20G42, 16S38}
\keywords{Noncommutative algebraic manifold, Matrix model, Quantum group}

\begin{abstract}
We discuss the notion of matrix model, $\pi:C(X)\to M_K(C(T))$, for algebraic submanifolds of the free complex sphere, $X\subset S^{N-1}_{\mathbb C,+}$. When $K\in\mathbb N$ is fixed there is a universal such model, which factorizes as $\pi:C(X)\to C(X^{(K)})\subset M_K(C(T))$. We have $X^{(1)}=X_{class}$ and, under a mild assumption, inclusions $X^{(1)}\subset X^{(2)}\subset X^{(3)}\subset\ldots\subset X$. Our main results concern $X^{(2)},X^{(3)},X^{(4)},\ldots$, their relation with various half-classical versions of $X$, and lead
to the construction of families of higher half-liberations of the complex spheres and of the unitary groups, all having faithful matrix models. 
\end{abstract}

\maketitle

\section*{Introduction}

There are several possible definitions for the noncommutative algebraic manifolds. According to a well-known theorem of Gelfand, one reasonable point of view is that the noncommutative analogues of the compact real algebraic manifolds $X_{class}\subset\mathbb C^N$ should be the abstract spectra of the universal $C^*$-algebras of the following type:
$$C(X)=C^*\left(z_1,\ldots,z_N\Big|P_i(z_1,\ldots,z_N, z_1^*, \ldots ,z_N^*)=0\right)$$

Here the family of noncommutative polynomials $\{P_i\}$ must be such that the maximal $C^*$-norm on the universal $*$-algebra $<z_1,\ldots,z_N|P_i(z_1,\ldots,z_N,z_1^*, \ldots ,z_N^*)=0>$ is bounded. In order to avoid this issue, we will restrict here attention to the algebraic submanifolds of the free complex sphere, $X\subset S^{N-1}_{\mathbb C,+}$. That is, we will assume that the polynomial relations $P_i(z_1,\ldots,z_N)=0$ defining $X$ include the following two relations:
$$\sum_iz_iz_i^*=\sum_iz_i^*z_i=1$$

Associated to $X$ is its classical version, obtained as Gelfand spectrum of the algebra $C(X_{class})=C(X)/I$, where $I\subset C(X)$ is the commutator ideal. We have:
$$X_{class}=\left\{z\in\mathbb C^N\Big|P_i(z_1,\ldots,z_N, \overline{z_1}, \ldots ,\overline{z_N})=0\right\}$$

The general liberation philosophy is that of viewing $X$ as a ``liberation'' of $X_{class}$. This point of view was intensively developed in the quantum group case, starting with Wang's papers \cite{wa1}, \cite{wa2}. Several extensions, to the case of noncommutative homogeneous spaces, or more general manifolds, have been developed recently \cite{ba1}, \cite{ba2}, \cite{bi1}, \cite{bdu}.

We will be interested here in an alternative point of view, more analytical, coming from random matrix theory. Generally speaking, a matrix model for a noncommutative manifold $X$ is a representation of $C^*$-algebras, as follows:
$$\pi:C(X)\to M_K(C(T))$$

Here $T$ is a compact space, and $K<\infty$. This is of course the general algebraic framework. Further axioms can include the fact that $T$ is a compact Lie group, or an homogeneous space, or an abstract compact probability space. Observe that, with this latter assumption, $M_K(C(T))$ is a usual random matrix space, in the sense of probability theory, and we can obtain an integration functional on $X$, simply by setting:
$$\int_X\varphi=\frac{1}{K}\sum_{i=1}^K\int_T\pi(\varphi)_{ii}$$

In the quantum group case, there is a whole machinery devoted to the study of such models.
Our purpose here is to start an adaptation work for these methods, to the algebraic manifold case. We will extend one of the simplest available technologies, namely the $2\times2$ matrix model picture of the  ``half-liberation'' procedure \cite{bsp}, \cite{bve}, discussed in \cite{ba1}, \cite{bi1}, \cite{bdu}.

In order to explain our results, let us go back to the general matrix models for the algebraic manifolds, $\pi:C(X)\to M_K(C(T))$. When $K\in\mathbb N$ is fixed, one can abstractly construct a ``maximal'' such model, and this model must factorize as follows:
$$\pi:C(X)\to C(X^{(K)})\subset M_K(C(T))$$

Here $X^{(K)}\subset X$ is the closed subspace obtained by taking the image of $\pi$. Under a mild assumption,  we obtain in this way an algebraic submanifold of $X$, and  an increasing sequence of algebraic submanifolds of $X$, as follows:
$$X^{(1)}\subset X^{(2)}\subset X^{(3)}\subset\ldots\ldots\subset X$$
with $X^{(1)}=X_{class}$. In general, $X^{(K)}\subset X$ can be thought of as being the ``part of $X$ which is realizable with $K\times K$ random matrices''.

Our main results will concern the analogues of the equality $X_{class}= X^{(1)}$, for the higher order manifolds $X^{(2)},X^{(3)},X^{(4)},\ldots$ 
Our starting point is that for $X=S^{N-1}_{\mathbb R, *}$, the real half-liberated sphere or $X=  S^{N-1}_{\mathbb C, **}$, the complex half-liberated sphere, we have $X=X^{(2)}$. Investigating the general case  $K \geq 1$  will  lead 
to the construction of an operation $$X\to X_{1/K-class}$$ (with $X \subset S^{N-1}_{\mathbb C, +}$ assumed to be $K$-symmetric, see Section 6) which at $K=1$ is the operation $X\to X_{class}$, and with, at any $K$ 
$$X_{1/K-class}\subset X^{(K)}$$
In particular starting with $X=S^{N-1}_{\mathbb C, +}$, we will obtain the construction of a family 
 $K$-half liberated sphere  $S^{N-1}_{\mathbb C, K}$ with 
$$(S^{N-1}_{\mathbb C, K})_{class}=S^{N-1}_{\mathbb C, 1}=S^{N-1}_{\mathbb C}, \ S^{N-1}_{\mathbb C, 2}=S^{N-1}_{\mathbb C, **}, \ S^{N-1}_{\mathbb C, K}\subset (S^{N-1}_{\mathbb C, +})^{(K)}$$
 and $S^{N-1}_{\mathbb C, K}\not = S^{N-1}_{\mathbb C, K'}$ for $K \not=K'$.

Summarizing, our results produce higher versions of the previously known half-liberated spheres, and bring some non-trivial information on $X^{(K)}$ in general. 

Let us also mention that our framework also includes the  case of compact quantum groups, and produces quantum groups that are new, and could provide some interesting input for the classification program for the ``easy quantum groups'' in \cite{fre}, \cite{rwe},  \cite{tw2}.

As limit cases of the higher half-liberations we construct, we also get a sphere $S^{N-1}_{\mathbb C, \infty}$ and quantum group $U_{N, \infty}$ that we believe to be of interest. 
For the reader who is familiar with quantum group easiness, let us mention that the easy quantum group $U_{N, \infty}$ 
comes from the following diagrams:
$$\xymatrix@R=12mm@C=5mm{\circ\ar@{-}[drr]&\bullet\ar@{-}[drr]&\bullet\ar@{-}[dll]&\circ\ar@{-}[dll]\\\bullet&\circ&\circ&\bullet}\quad \quad 
\xymatrix@R=12mm@C=5mm{\circ\ar@{-}[drr]&\bullet\ar@{-}[drr]&\circ\ar@{-}[dll]&\bullet\ar@{-}[dll]\\\circ&\bullet&\circ&\bullet}$$


The paper is organized as follows: in Section 1 we recall the set-up for noncommutative algebraic manifolds. In Section 2 we discuss matrix models and the universal matrix model.  Sections 3-5 are devoted to the construction of higher half liberated spheres together with the construction of the associated faithful matrix models. Section 6 introduces the construction of the $1/K$-classical version of noncommutative manifolds. Section 7 briefly explains how the previous considerations apply as well to compact quantum groups.
In the final Section 8, we define limit versions of our previous spheres and unitary quantum groups.

\medskip

\textbf{Acknowledgements.} We thank Simon Riche for useful discussions on pure tensors, and the referee for pointing out reference \cite{har} on this topic.

\section{Noncommutative algebraic manifolds}


Let us recall that  the Gelfand theorem enables one to reconstruct  a compact space $X$  from $C(X)$, the algebra of continuous functions on
$X$, and conversely states that any commutative $C^*$-algebra (we assume that $C^*$-algebras are unital) is of this form. To be more precise, given a commutative $C^*$-algebra $A$, the underlying compact space $X=Spec(A)$ is the set of characters $\chi:A\to\mathbb C$, with topology making the evaluation maps continuous.

In view of Gelfand's theorem, we have the following traditional definition:

\begin{definition}
The category of noncommutative compact spaces is the category of unital $C^*$-algebras, with the arrows reversed. Given a noncommutative compact space $X$, coming from a $C^*$-algebra $A$, we write $A=C(X)$ and $X=Spec(A)$, and call $X$ the abstract spectrum of $A$.
\end{definition}

Observe that the category of usual compact spaces embeds into the category of noncommutative compact spaces. More precisely, a compact space $X$ corresponds to the noncommutative space associated to the algebra $A=C(X)$. In addition, in this situation, $X$ can be recovered as a Gelfand spectrum, $X=Spec(A)$. 

In this framework, an inclusion of $Y \subset X$ of noncommutative spaces corresponds to a surjective $C^*$-algebra
map $C(X) \rightarrow C(Y)$. Any noncommutative compact space $X$ contains a maximal classical compact subspace:

\begin{definition}
The classical version $X_{class}\subset X$ of a noncommutative compact space $X$ is defined by
$$C(X_{class})= C(X)_{ab}$$
where $C(X)_{ab}$ is  the quotient of $C(X)$ by the commutator ideal.

\end{definition}

As an illustration, let us discuss the case of the noncommutative algebraic manifolds. As yet another consequence of the Gelfand theorem, we can formulate:

\begin{definition}\label{def:algv0}
The noncommutative analogues of the compact real algebraic manifolds $X\subset\mathbb R^N$, $Y\subset\mathbb C^N$ are the abstract spectra of the universal $C^*$-algebras of type
\begin{eqnarray*}
C(X)&=&C^*\left(z_1,\ldots,z_N\Big|z_i=z_i^*,P_i(z_1,\ldots,z_N)=0\right)\\
C(Y)&=&C^*\left(z_1,\ldots,z_N\Big|P_i(z_1,\ldots,z_N, z_1^*, \ldots , z_N^*)=0\right)
\end{eqnarray*}
where the family of noncommutative polynomials $\{P_i\}$ is such that the maximal $C^*$-norm on the universal $*$-algebras on the right is bounded.
\end{definition}

This is of course an abstract definition, with the boundeness condition on the maximal $C^*$-norm being a real issue. We will discuss this issue in what follows.

In the context of Definition \ref{def:algv0}, the classical versions of $X,Y$, are given by 
\begin{eqnarray*}
X_{class}&=&\left\{z\in\mathbb R^N\Big|P_i(z_1,\ldots,z_N)=0\right\}\\
Y_{class}&=&\left\{z\in\mathbb C^N\Big|P_i(z_1,\ldots,z_N, \overline{z_1}, \ldots , \overline{z_N})=0\right\}
\end{eqnarray*}
Conversely, any such manifolds $X_{class},Y_{class}$ can be obtained from Definition \ref{def:algv0}, by adding the commutation relations between $z_i,z_i^*$ to the defining relations $P_i=0$.

Let us go back now to the boundedness condition in Definition 1.3. This is a true technical issue, and in order to avoid it, and to work with a much lighter formalism, we will assume that our manifolds appear as submanifolds of the ``free spheres''. This is also supported by the fact that a compact topological manifold can always be realized as closed subspace of an Euclidean  sphere.  

Consider the standard sphere, $S^{N-1}_\mathbb R=\{z\in\mathbb R^N|\sum_iz_i^2=1\}$, and the standard complex sphere, $S^{N-1}_\mathbb C=\{z\in\mathbb C^N|\sum_i|z_i|^2=1\}$. In order to discuss the free analogues of these spheres, we must first understand the associated algebras $C(S^{N-1}_\mathbb R),C(S^{N-1}_\mathbb C)$. The  well-known result here, coming from the Gelfand theorem, is as follows:

\begin{proposition}
We have the presentation results
$$C(S^{N-1}_\mathbb R)=C^*_{comm}\left(z_1,\ldots,z_N\Big|z_i=z_i^*,\sum_iz_i^2=1\right)$$
$$C(S^{N-1}_\mathbb C)=C^*_{comm}\left(z_1,\ldots,z_N\Big|\sum_iz_iz_i^*=\sum_iz_i^*z_i=1\right)$$
where by $C^*_{comm}$ we mean universal commutative $C^*$-algebra.
\end{proposition}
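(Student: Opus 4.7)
The plan is to invoke Gelfand duality and identify the Gelfand spectrum of each of the two universal commutative $C^*$-algebras with the corresponding classical sphere. Let me denote the universal commutative $C^*$-algebras on the right-hand sides of the proposition by $A_\mathbb R$ and $A_\mathbb C$ respectively.

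First I would check that these universal objects make sense as $C^*$-algebras, i.e.\ that the maximal $C^*$-seminorm on the underlying universal $*$-algebras is bounded. For the real case, the relations $z_i=z_i^*$ together with $\sum_i z_i^2=1$ force each $z_i^2$ to be a self-adjoint element bounded above by $1$ in any $C^*$-representation (as a positive summand of a positive sum equal to $1$), so $\|z_i\|\leq 1$ uniformly. The complex case is analogous: $z_iz_i^*$ and $z_i^*z_i$ are positive summands in a sum equal to $1$, so $\|z_i\|\leq 1$ in any $*$-representation. Hence $A_\mathbb R$ and $A_\mathbb C$ are well-defined commutative unital $C^*$-algebras.

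Next I would apply the Gelfand theorem, writing $A_\mathbb R=C(X_\mathbb R)$ and $A_\mathbb C=C(X_\mathbb C)$ where $X_\mathbb R=\mathrm{Spec}(A_\mathbb R)$ and $X_\mathbb C=\mathrm{Spec}(A_\mathbb C)$, and describe the spectra concretely. A character $\chi:A_\mathbb R\to\mathbb C$ is determined by the $N$-tuple $(\chi(z_1),\ldots,\chi(z_N))$, which must satisfy $\chi(z_i)\in\mathbb R$ (since $z_i=z_i^*$) and $\sum_i\chi(z_i)^2=1$. Thus the evaluation map $\chi\mapsto(\chi(z_i))_i$ gives a continuous injection $X_\mathbb R\hookrightarrow S^{N-1}_\mathbb R$. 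Conversely, every point $z\in S^{N-1}_\mathbb R$ defines a $*$-representation $\mathbb C[z_1,\ldots,z_N]\to\mathbb C$ by $z_i\mapsto z_i$, satisfying all the defining relations, hence a character of $A_\mathbb R$, and the resulting map $S^{N-1}_\mathbb R\to X_\mathbb R$ is the inverse of the previous one. Both spaces are compact Hausdorff, so this continuous bijection is a homeomorphism. A completely parallel argument works for the complex sphere: a character of $A_\mathbb C$ is an $N$-tuple $(\chi(z_i))\in\mathbb C^N$ with $\sum_i|\chi(z_i)|^2=1$, and conversely every such tuple defines a character.

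Having established $X_\mathbb R\cong S^{N-1}_\mathbb R$ and $X_\mathbb C\cong S^{N-1}_\mathbb C$ as compact spaces, Gelfand duality yields the claimed isomorphisms of $C^*$-algebras. The only mildly non-routine step is the boundedness check, but as noted above this is immediate from positivity in the defining relations. There is no real obstacle here; the argument is a textbook application of Gelfand duality to a finitely presented commutative $C^*$-algebra whose defining relations are of ``sphere'' type.
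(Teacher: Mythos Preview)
Your proof is correct and follows exactly the approach the paper indicates: the paper does not actually write out a proof for this proposition, merely stating it as ``the well-known result here, coming from the Gelfand theorem,'' and your argument is precisely the standard Gelfand-duality verification that makes this precise. There is nothing to add.
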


We can now proceed with ``liberation'', as follows \cite{bago, ba0}:

\begin{definition}
Associated to any $N\in\mathbb N$ are the universal $C^*$-algebras
\begin{eqnarray*}
C(S^{N-1}_{\mathbb R,+})&=&C^*\left(z_1,\ldots,z_N\Big|z_i=z_i^*,\sum_iz_i^2=1\right)\\
C(S^{N-1}_{\mathbb C,+})&=&C^*\left(z_1,\ldots,z_N\Big|\sum_iz_iz_i^*=1=\sum_iz_i^*z_i\right)
\end{eqnarray*}
whose abstract spectra $S^{N-1}_{\mathbb R,+},S^{N-1}_{\mathbb C,+}$ are called free analogues of $S^{N-1}_\mathbb R,S^{N-1}_\mathbb C$.
\end{definition}

Observe that the above two algebras are indeed well-defined, because the relations show that we have $||z_i||\leq1$, for any $C^*$-norm. Thus the biggest $C^*$-norm is bounded, and the above two enveloping $C^*$-algebras are well-defined.


We can now introduce the manifolds that we are interested in:

\begin{definition}
A closed subspace $X\subset S^{N-1}_{\mathbb C,+}$ is called algebraic when
$$C(X)=C(S^{N-1}_{\mathbb C,+})\Big/\Big<P_i(z_1,\ldots,z_N, z_1^*, \ldots , z_N^*)=0,\forall i\in I\Big>$$
for a certain family of noncommutative polynomials $P_i\in\mathbb C<z_1,\ldots,z_N, z_1^*, \ldots z_N^*>$.
\end{definition}

Observe that $S^{N-1}_\mathbb R,S^{N-1}_\mathbb C,S^{N-1}_{\mathbb R,+},S^{N-1}_{\mathbb C,+}$ are algebraic manifolds, and  there are many other examples.

Given $X \subset S^{N-1}_{\mathbb C,+}$, we denote by $\mathcal O(X)$ the $*$-subalgebra of $C(X)$ generated by the elements $z_i$ (the coordinate algebra of $X$), and requiring that $X$ is algebraic precisely means that $C(X)$ is the enveloping $C^*$-algebra of $\mathcal O(X)$.

In order to present some interesting classes of examples, we recall from Wang's paper \cite{wa1} that the free analogues of $O_N,U_N$ are constructed as follows:
\begin{eqnarray*}
C(O_N^+)&=&C^*\left((u_{ij})_{i,j=1,\ldots,N}\Big|u=\bar{u},u^t=u^{-1}\right)\\
C(U_N^+)&=&C^*\left((u_{ij})_{i,j=1,\ldots,N}\Big|u^*=u^{-1},u^t=\bar{u}^{-1}\right)
\end{eqnarray*}

To be more precise, $O_N^+,U_N^+$ are compact matrix quantum groups in the sense of Woronowicz \cite{wo1}, \cite{wo2}, with comultiplication, counit and antipode as follows:
$$\Delta(u_{ij})=\sum_ku_{ik}\otimes u_{kj}\quad,\quad\varepsilon(u_{ij})=\delta_{ij}\quad,\quad S(u_{ij})=u_{ji}^*$$

We recall that a closed quantum subgroup $G\subset U_N^+$, with standard coordinates denoted $v_{ij}$, is called full when $C(G)$ is the enveloping $C^*$-algebra of the $*$-algebra generated by the variables $v_{ij}$. As a basic example, the discrete group algebras $C^*(\Gamma)$ are full, while the reduced algebras $C^*_{red}(\Gamma)$, with $\Gamma$ not amenable, are not full. See \cite{ntu}, \cite{wo1}.

With this convention, we have the following result:

\begin{theorem}
The following are noncommutative algebraic manifolds:
\begin{enumerate}
\item The real algebraic submanifolds $X\subset S^{N-1}_\mathbb C$.

\item The finite quantum subspaces $X\subset S^{N-1}_{\mathbb C,+}$.

\item The full closed quantum subgroups $G\subset U_N^+$.
\end{enumerate}
\end{theorem}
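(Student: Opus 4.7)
The plan is to handle each of the three cases separately. In each case I will exhibit $C(X)$ (resp.\ $C(G)$) as the universal $C^*$-algebra generated by $z_1,\ldots,z_M$ subject to the sphere relations plus further polynomial relations, and then verify that this universal object really does give back $C(X)$ rather than some larger completion.

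Case (1) is the easiest. Given a real algebraic $X \subset S^{N-1}_{\mathbb C}$ cut out by polynomials $P_i(z,\bar z)$, I would present $C(X)$ as the quotient of $C(S^{N-1}_{\mathbb C,+})$ by the commutator relations $[z_i,z_j]=0$ and $[z_i,z_j^*]=0$ together with the relations $P_i=0$. The universal object is commutative by construction, so by Gelfand duality its spectrum is the closed subset of $S^{N-1}_{\mathbb C}$ cut out by the $P_i$, which is exactly $X$.

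For case (2), finite-dimensionality of $C(X)$ does the job. Since $C(X)$ is generated as a $C^*$-algebra by $z_1,\ldots,z_N$ and is finite-dimensional, the $*$-subalgebra $\mathcal{O}(X)$ they generate is forced to equal $C(X)$. Hence the kernel of $\mathcal{O}(S^{N-1}_{\mathbb C,+}) \twoheadrightarrow \mathcal{O}(X)$ has finite codimension and is generated as a $*$-ideal by finitely many polynomials $P_1,\ldots,P_m$. Taking these as the defining polynomials, the universal $*$-algebra with the sphere relations and $P_j=0$ is $\mathcal{O}(X) = C(X)$; because $C(X)$ is already a $C^*$-algebra, it coincides with its enveloping $C^*$-algebra, so the universal $C^*$-algebra is $C(X)$.

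Case (3) requires the most care. For $G \subset U_N^+$ with coordinates $u_{ij}$ I would embed $G$ into $S^{N^2-1}_{\mathbb C,+}$ via the rescaled coordinates $z_{ij}=u_{ij}/\sqrt{N}$: the biunitary relations of $U_N^+$ directly force $\sum_{ij}z_{ij}z_{ij}^*=\sum_{ij}z_{ij}^*z_{ij}=1$, yielding a surjective $*$-homomorphism $C(S^{N^2-1}_{\mathbb C,+}) \twoheadrightarrow C(G)$. The rescaled biunitary relations $\sum_k z_{ik} z_{jk}^*=\delta_{ij}/N$, etc.\ are polynomial, and the further relations defining $G$ inside $U_N^+$ come from a Hopf $*$-ideal in $\mathcal{O}(U_N^+)$, whose generators are polynomials in the $u_{ij}$, equivalently in the $z_{ij}$. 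The main point is then to identify this polynomially-presented universal $C^*$-algebra with $C(G)$, and this is exactly what the fullness assumption delivers: by definition, $C(G)$ is the enveloping $C^*$-algebra of $\mathcal{O}(G)$, which is precisely what is needed. The recurring subtle step across all three cases is this matching of a universal $C^*$-algebra with $C(X)$: in (1) it comes from Gelfand duality, in (2) from finite-dimensionality, and in (3) it is the definition of fullness; I expect (3) to be the main obstacle, since without fullness the enveloping $C^*$-algebra could properly strictly contain $C(G)$.
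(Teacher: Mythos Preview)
Your argument is correct in all three cases, and for (1) and (2) it is essentially the paper's proof, stated with slightly more care. One small remark on (2): your claim that the kernel is generated by \emph{finitely many} polynomials is true (an ideal of finite codimension in a finitely generated algebra is always finitely generated as a two-sided ideal), but you neither justify it nor need it, since Definition~1.6 allows an arbitrary family $\{P_i\}$; taking the whole kernel as the family already suffices.

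For (3) you take a genuinely simpler route than the paper. You observe that the kernel of $\mathcal O(U_N^+)\twoheadrightarrow\mathcal O(G)$, viewed as a family of noncommutative polynomials in the $z_{ij}$, cuts out $\mathcal O(G)$, and then invoke fullness, which by definition says $C(G)=C^*(\mathcal O(G))$; this is exactly the characterization of ``algebraic'' recorded right after Definition~1.6. The paper instead appeals to Woronowicz's Tannaka--Krein duality to exhibit an explicit presentation
\[
C(G)=C(O_N^+)\Big/\Big(T\in Hom_G(u^{\otimes k},u^{\otimes l}),\ \forall k,l\Big),
\]
so that the defining relations come from intertwiners. Your approach is more elementary and entirely sufficient for the bare statement; the paper's approach buys an explicit and structured family of relations, which is informative in its own right and connects the algebraicity of $G$ to its representation theory, but is not logically required here.
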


\begin{proof}
All these results are well-known, the proof being as follows:

(1) This is clear from definitions. Observe also that, conversely, a closed subset $X\subset S^{N-1}_{\mathbb C}$ is algebraic precisely when it is a real algebraic manifold, in the usual sense.

(2) When the subspace $X\subset S^{N-1}_{\mathbb C,+}$ is finite, in the sense that the algebra $C(X)$ is finite dimensional, we have $C(X)=\mathcal O(X)$.

(3) Our claim here is that we have inclusions of algebraic manifolds, as follows:
$$G\subset U_N^+\subset S^{N^2-1}_{\mathbb C,+}$$

Indeed, regarding the inclusion at right, let $u_{ij}$ be the standard coordinates of $U_N^+$. Since $u=(u_{ij})$ is biunitary we have $\sum_ju_{ij}u_{ij}^*=\sum_ju_{ij}^*u_{ij}=1$ for any $i$, so the rescaled variables $z_{ij}=u_{ij}/\sqrt{N}$ satisfy the equations for $S^{N^2-1}_{\mathbb C,+}$. In addition, since the biunitarity conditions on $u$ are algebraic, we obtain in this way an algebraic submanifold:
$$C(U_N^+)=C(S^{N^2-1}_{\mathbb C,+})\Big/\Big(zz^*=z^*z=\bar{z}z^t=z^t\bar{z}=\frac{1}{N}\cdot1_N\Big)$$

Regarding the inclusion at left, this comes by definition, and what is left to prove is that $G\subset U_N^+$ is algebraic. But this follows from Woronowicz's Tannakian results in \cite{wo2}. Indeed, in the orthogonal case, $G\subset O_N^+$, we have the following presentation result:
$$C(G)=C(O_N^+)\Big/\Big(T\in Hom_G(u^{\otimes k},u^{\otimes l}),\forall k,l\in\mathbb N\Big)$$
where $Hom_G(u^{\otimes k},u^{\otimes l})$ denotes the space of morphisms of representations, and the notation means that each $T \in 
Hom_G(u^{\otimes k},u^{\otimes l})$ defines a family of algebraic relations between the $u_{ij}'s$, in the standard way.

In the unitary case the proof is similar, replacing the tensor powers of $u$ by tensor powers of $u$ and $\bar{u}$.  See \cite{mal}.
\end{proof}



\section{Matrix models}

We discuss now the notion of matrix model. 
For $X\subset S^{N-1}_{\mathbb C,+}$ infinite, there is no faithful representation of $C(X)$ into a matrix algebra $M_K(\mathbb C)$, and we will use the following notion.

\begin{definition}\label{def:mod}
A matrix model for $X\subset S^{N-1}_{\mathbb C,+}$ is a morphism of $C^*$-algebras
$$\pi:C(X)\to M_K(C(T))$$
where $T$ is a compact space, and $K\geq1$ is an integer.
\end{definition}

As already mentioned in the introduction, this is of course just the general framework, and $T$ might have some more structure.
In this paper we will focus on the basic theory, and use Definition \ref{def:mod} as it is.

As a first example, at $K=1$ a matrix model is simply a morphism of $C^*$-algebras $\pi:C(X)\to C(T)$, with $T$ being a compact space. Such a morphism must come from a continuous map $p:T\to X_{class} \subset X$, and if $\pi$ is assumed to be faithful, then $X=X_{class}$ and $p$ must be surjective. 

To generalize the above considerations at $K\geq 2$, we will use the following definition.

\begin{definition}
Let  $X\subset S^{N-1}_{\mathbb C,+}$. We define $X^{(K)} \subset X$ by
$$C(X^{(K)}) = C(X)/J_K$$
where $J_K$ is the intersection of the kernels of all matrix representations $C(X) \rightarrow M_L(\mathbb C)$, for any $ L \leq K$.
\end{definition}

Clearly the definition can be made for any $C^*$-algebra. We have
$$X_{class}=X^{(1)} \subset X^{(2)} \subset X^{(3)} \ldots \ldots \subset  X$$
and $X^{(\infty)}=\bigcup_{K\geq1}X^{(K)} = X$ if and only $C(X)$ is residually finite-dimensional, see \cite{chi} for a recent paper on that topic, in the context of quantum groups.

\begin{proposition}\label{prop:X=XK}
 Let $Y \subset X\subset S^{N-1}_{\mathbb C,+}$. Then $Y \subset X^{(K)}$ if and only if any irreducible representation of $C(Y)$ has dimension $\leq K$. In particular $X^{(K)} = X$ if and only if any irreducible representation of $C(X)$ has dimension $\leq K$.
\end{proposition}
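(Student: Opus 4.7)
The key observation is that the inclusion $Y \subset X^{(K)}$ is equivalent to the quotient map $q : C(X) \twoheadrightarrow C(Y)$ vanishing on the ideal $J_K$, i.e., $J_K \subset \ker q$. So the proposition reduces to understanding which quotients of $C(X)$ factor through $C(X^{(K)}) = C(X)/J_K$.

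For the ``only if'' direction, assume $Y \subset X^{(K)}$, so that $C(Y)$ is a quotient of $C(X^{(K)})$ and every irreducible representation of $C(Y)$ pulls back to an irreducible representation of $C(X^{(K)})$. It therefore suffices to bound the dimensions of irreducible representations of $C(X^{(K)})$. By the very definition of $J_K$, the algebra $C(X^{(K)})$ embeds in the direct product $\prod_\alpha M_{L_\alpha}(\mathbb{C})$ indexed by all matrix representations of $C(X)$ of size $L_\alpha \leq K$. By the Amitsur--Levitzki theorem, each factor satisfies the standard polynomial identity $S_{2K}$, and this identity passes to products and to $C^*$-subalgebras; hence $C(X^{(K)})$ satisfies $S_{2K}$. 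Given any irreducible representation $\rho : C(X^{(K)}) \to B(H)$, the image $\rho(C(X^{(K)}))$ inherits $S_{2K}$ but also contains the compact operators $K(H)$. If $\dim H \geq K+1$, then $K(H)$ would contain a copy of $M_{K+1}(\mathbb{C})$, contradicting the sharpness half of Amitsur--Levitzki. Hence $\dim H \leq K$.

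For the converse, assume every irreducible representation of $C(Y)$ has dimension $\leq K$. Gelfand--Naimark gives a faithful embedding $C(Y) \hookrightarrow \prod_\pi B(H_\pi)$ over the irreducible representations $\pi$, and by hypothesis each factor is a matrix algebra of size $\leq K$. Thus $b \in C(Y)$ vanishes iff $\pi(b) = 0$ for every irreducible representation $\pi : C(Y) \to M_L(\mathbb{C})$ with $L \leq K$. Given $a \in J_K$, set $b = q(a)$: for any such $\pi$ the composition $\pi \circ q : C(X) \to M_L(\mathbb{C})$ is one of the matrix representations entering the definition of $J_K$, so it kills $a$, giving $\pi(b) = 0$. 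Therefore $b = 0$ and $J_K \subset \ker q$. The ``in particular'' statement follows by taking $Y = X$ and using $X^{(K)} \subset X$.

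The main obstacle is the ``only if'' direction, namely the nontrivial fact that a $C^*$-subalgebra of a product of matrix algebras of bounded size is subhomogeneous of that same bound. The polynomial-identity approach via Amitsur--Levitzki, combined with the standard fact that an irreducible representation of a $C^*$-algebra has range containing the compacts, seems to be the cleanest route; alternatively, once the factorization $C(X^{(K)}) \subset M_K(C(T))$ advertised in the introduction is established, one can argue directly from the fact that irreducible representations of $M_K(C(T))$ have dimension $\leq K$ together with the PI argument to transfer this bound to subalgebras.
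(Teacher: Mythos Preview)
Your argument follows the same route as the paper's: the ``if'' direction via separation of points by irreducible representations, and the ``only if'' direction via a polynomial-identity (Amitsur--Levitzki) argument, which the paper compresses into a citation of Dixmier, Proposition~3.6.3. Your write-up essentially supplies the details the paper omits.

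One step needs correction. The claim that an irreducible representation $\rho:A\to B(H)$ of a $C^*$-algebra always has range containing $K(H)$ is \emph{not} a general fact; it characterises GCR (type~I) $C^*$-algebras and fails, for instance, for the Calkin algebra. As stated, your use of it is circular: you would need to know in advance that $C(X^{(K)})$ is type~I, which is essentially what you are proving. The fix is standard and keeps your strategy intact. Since $\rho(C(X^{(K)}))$ satisfies $S_{2K}$ and is WOT-dense in $B(H)$ by the double commutant theorem, separate WOT-continuity of multiplication lets the multilinear identity $S_{2K}$ pass to $B(H)$, forcing $\dim H\le K$ by the sharpness of Amitsur--Levitzki. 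Equivalently, one invokes Kaplansky's theorem that a primitive PI ring is a simple algebra of bounded dimension over its centre, applied to the primitive $C^*$-algebra $\rho(C(X^{(K)}))$; this is what underlies Dixmier's 3.6.3.
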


\begin{proof}
If any irreducible representation of $C(Y)$ has dimension $\leq K$, then $Y \subset X^{(K)}$ follows from the standard fact that the irreducible representations of a $C ^*$-algebra separate its points, see e.g. \cite{dix}. Conversely, if $Y\subset X^{(K)}$, it is enough to show that  any  irreducible representation of $C(X^{(K)})$ has dimension $\leq K$: this follows from a polynomial identity argument, as in \cite[Proposition 3.6.3]{dix}.
\end{proof}

The connection with the previous considerations is:

\begin{proposition}\label{prop:faithimplX=XK}
If $X\subset S^{N-1}_{\mathbb C,+}$ has a faithful matrix model
$C(X)\to M_K(C(T))$, then $X=X^{(K)}$.
\end{proposition}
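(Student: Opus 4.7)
The plan is to unwind the definition of $X^{(K)}$ directly and exhibit enough matrix representations into $M_K(\mathbb C)$ to separate points of $C(X)$. By definition, $C(X^{(K)}) = C(X)/J_K$ where $J_K$ is the intersection of the kernels of all $*$-representations $C(X) \to M_L(\mathbb C)$ with $L \leq K$. So to prove $X = X^{(K)}$ it suffices to show $J_K = 0$.

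First I would produce, for each point $t \in T$, a representation $\pi_t : C(X) \to M_K(\mathbb C)$ by composing the model $\pi$ with the evaluation map $\mathrm{ev}_t : M_K(C(T)) \to M_K(\mathbb C)$, $(f_{ij}) \mapsto (f_{ij}(t))$. Each $\pi_t$ lands in $M_K(\mathbb C)$, so each $\pi_t$ is one of the representations whose kernels are intersected to form $J_K$. Hence
$$J_K \;\subset\; \bigcap_{t \in T} \ker \pi_t.$$

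Next I would observe that this intersection is exactly $\ker \pi$: an element $(f_{ij}) \in M_K(C(T))$ is zero if and only if $f_{ij}(t) = 0$ for every $t$ and every $i,j$, which is precisely the condition that $\mathrm{ev}_t((f_{ij})) = 0$ for all $t$. Therefore
$$J_K \;\subset\; \ker \pi \;=\; 0,$$
using faithfulness of $\pi$ at the last step. This gives $C(X^{(K)}) = C(X)$, i.e.\ $X = X^{(K)}$.

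There is no serious obstacle here; the argument is essentially a straight unwinding of definitions, and the only point that requires a moment of thought is the elementary fact that the evaluation maps $\{\mathrm{ev}_t\}_{t\in T}$ jointly separate points of $M_K(C(T))$, which in turn transfers the injectivity of $\pi$ into injectivity of the family $\{\pi_t\}_{t\in T}$. Alternatively, one could invoke Proposition \ref{prop:X=XK} and argue that any irreducible representation of $C(X)$, being separated from zero by some $\pi_t$, has dimension at most $K$; but the direct route above is cleaner.
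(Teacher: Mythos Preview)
Your proof is correct and in fact more direct than the paper's. The paper argues via Proposition \ref{prop:X=XK}: since $C(X)$ embeds faithfully into $M_K(C(T))$, every irreducible representation of $C(X)$ has dimension $\leq K$ (using that irreducible representations of $M_K(C(T))$ have dimension exactly $K$, and that any irreducible representation of a $C^*$-subalgebra arises as a subrepresentation of an irreducible of the ambient algebra), and then Proposition \ref{prop:X=XK} gives $X=X^{(K)}$. You instead bypass this representation-theoretic detour entirely: the evaluation maps $\pi_t=\mathrm{ev}_t\circ\pi$ already give a separating family of $K\times K$ representations, so $J_K\subset\bigcap_t\ker\pi_t=\ker\pi=0$ by faithfulness. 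Your route is more elementary and self-contained; the paper's route has the mild advantage of simultaneously establishing the stronger statement that every irreducible representation of $C(X)$ has dimension $\leq K$, which is used elsewhere (e.g.\ in Corollary \ref{cor:irrSNK}). Your closing remark about an alternative via Proposition \ref{prop:X=XK} is a bit garbled as written (the phrase ``being separated from zero by some $\pi_t$'' does not by itself bound the dimension of an irreducible), but this does not affect the main argument.
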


\begin{proof}
This follows from Proposition \ref{prop:X=XK} and 
standard representation theory \cite{dix}: the irreducible representations of $M_K(C(T))$ all have dimension $K$, and an irreducible representation of a subalgebra is always isomorphic to a  subrepresentation of an irreducible representation of the big algebra.
\end{proof}

We now discuss the universal $K\times K$-matrix model, 
 a $C^*$-algebra analogue of character varieties for discrete groups or finite-dimensional algebras, see e.g. \cite{lubmag,kra}.

\begin{proposition}\label{prop:univmodel}
 Given  $X\subset S^{N-1}_{\mathbb C,+}$ algebraic, the category of its $K\times K$ matrix models, with $K\geq 1$ being fixed, has a universal object
$\pi_K:C(X)\to M_K(C(T_K))$.
This means that if $\rho : C(X) \rightarrow M_K(C(T))$ is any matrix model, there exists
a commutative diagram
$$\xymatrix{C(X)\ar[rr]^{\pi}\ar[rd]_{\rho}&&M_K(C(T_K))\ar@{-->}[dl]\\&M_K(C(T))&}$$
where the vertical map on the right is unique and arises from a continuous map $T\to T_K$.
\end{proposition}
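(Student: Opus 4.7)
The plan is to build $T_K$ as an ``affine variety of $K \times K$ matrix representations'' of the coordinate algebra $\mathcal O(X)$. For each coordinate $z_i$ and each pair of matrix indices $a,b\in\{1,\ldots,K\}$ I would introduce a formal commuting scalar generator $x_{i,a,b}$, and assemble them into matrices $Z_i := (x_{i,a,b})_{a,b}$, with $Z_i^*$ meaning the conjugate-transpose matrix. Then $C(T_K)$ is to be defined as the universal commutative unital $C^*$-algebra generated by $\{x_{i,a,b}\}$ subject to the entry-wise unpacking of every defining polynomial relation $P(Z_1,\ldots,Z_N,Z_1^*,\ldots,Z_N^*)=0$ of $X$ (including the sphere relations).

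For this universal construction to produce a genuine $C^*$-algebra one needs a uniform bound on the generators, and this is built into the hypothesis $X\subset S^{N-1}_{\mathbb C,+}$: unpacking $\sum_i Z_iZ_i^*=1_K$ on the $(a,a)$-entry yields $\sum_{i,b}|x_{i,a,b}|^2=1$, so $\|x_{i,a,b}\|\leq 1$ in any $C^*$-seminorm. Hence $C(T_K)$ is a well-defined unital commutative $C^*$-algebra and $T_K=Spec(C(T_K))$ is a compact subset of a polydisk. The assignment $z_i\mapsto Z_i$ respects every defining relation of $X$ by construction, so it extends to a $*$-algebra morphism $\mathcal O(X)\to M_K(C(T_K))$, and then, via the enveloping $C^*$-algebra property of $C(X)$, to the desired morphism $\pi_K$.

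For the universal property, given any matrix model $\rho:C(X)\to M_K(C(T))$, set $y_{i,a,b}:=\rho(z_i)_{a,b}\in C(T)$. These entries commute pairwise because they all live in the commutative algebra $C(T)$, and they satisfy exactly the relations imposed on the $x_{i,a,b}$ since $\rho$ is a unital $*$-homomorphism. The universal property of $C(T_K)$ therefore produces a unique unital $*$-morphism $\phi:C(T_K)\to C(T)$ with $\phi(x_{i,a,b})=y_{i,a,b}$; its amplification $M_K(\phi):M_K(C(T_K))\to M_K(C(T))$ factors $\rho$ through $\pi_K$, and Gelfand duality converts $\phi$ into the required continuous map $T\to T_K$, unique because $\phi$ is determined on the generators.

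The only step carrying real content is the boundedness of the generators $x_{i,a,b}$, which is handed to us by the sphere relations; the rest is a routine universal-property argument combined with Gelfand duality, and no genuine obstacle is to be expected.
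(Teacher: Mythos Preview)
Your argument is correct and follows essentially the same route as the paper. The only cosmetic difference is that the paper presents $C(T_K)$ with one generator $x_{ij}(a)$ for \emph{every} element $a\in\mathcal O(X)$, together with the relations expressing that $a\mapsto(x_{ij}(a))$ is a unital $*$-homomorphism, whereas you present it with generators $x_{i,a,b}$ only for the coordinate functions $z_i$ and impose the defining polynomial relations of $X$ entrywise; since $\mathcal O(X)$ is itself presented by those polynomials, the two descriptions give the same commutative $C^*$-algebra, and the boundedness and universality arguments coincide.
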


\begin{proof}
Consider the universal commutative $C^*$-algebra generated by elements $x_{ij}(a)$, with $1 \leq i,j \leq K$, $a \in \mathcal O(X)$, subject to the relations ($a,b \in \mathcal O(X)$, $\lambda \in \mathbb C$, $1 \leq i,j\leq K$):
$$x_{ij}(a+\lambda b) = x_{ij}(a)+\lambda x_{ij}(b), \ x_{ij}(ab)= \sum_kx_{ik}(a)x_{kj}(b)$$
$$x_{ij}(1)=\delta_{ij}, \ x_{ij}(a)^*= x_{ji}(a^*)$$  
This indeed well-defined because of the relations $\sum_l \sum_k x_{ik}(z_l^*)x_{ki}(z_l)=1$. Let $T_K$ be the spectrum of this $C^*$-algebra. Since $X$ is algebraic, we get a matrix model
$$\pi : C(X) \rightarrow M_K(C(T_K)), \ \pi(z_k) = (x_{ij}(z_k))$$
and it is immediate, by construction of $T_K$ and $\pi$, that we have the announced universal matrix model.
\end{proof}

\begin{proposition}\label{prop:kerunivmodel}
 Let $X\subset S^{N-1}_{\mathbb C,+}$ with $X$ algebraic and $X_{class} \not= \emptyset$. Let $\pi:C(X)\to M_K(C(T_K))$ be the universal matrix model.
Then we have 
$$C(X^{(K)})= C(X)/Ker(\pi)$$
and hence $X=X^{(K)}$ if and only if $X$ has a faithful $K \times K$-matrix model.
\end{proposition}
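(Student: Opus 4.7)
The plan is to establish the key equality $\ker(\pi)=J_K$. Once this is proved, the first assertion is immediate by taking quotients, and the ``iff'' statement follows by combining it with Proposition \ref{prop:faithimplX=XK}: if $X=X^{(K)}$ then $J_K=0$, hence $\ker(\pi)=0$, so $\pi$ itself is a faithful $K\times K$-matrix model; the converse direction is exactly Proposition \ref{prop:faithimplX=XK}.

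For the inclusion $J_K\subseteq\ker(\pi)$, I would argue pointwise on $T_K$. For each $t\in T_K$, the evaluation map $\mathrm{ev}_t\colon M_K(C(T_K))\to M_K(\mathbb C)$ composed with $\pi$ yields a $K$-dimensional representation $\mathrm{ev}_t\circ\pi$ of $C(X)$. An element $a\in J_K$ lies in the kernel of every matrix representation of dimension $\leq K$, in particular of every $\mathrm{ev}_t\circ\pi$. Since an element of $M_K(C(T_K))$ vanishes if and only if it vanishes at every point of $T_K$, we deduce $\pi(a)=0$.

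The reverse inclusion $\ker(\pi)\subseteq J_K$ is the main technical point and the place where the hypothesis $X_{class}\neq\emptyset$ is used. Given $a\in\ker(\pi)$ and any matrix representation $\rho\colon C(X)\to M_L(\mathbb C)$ with $L\leq K$, I need $\rho(a)=0$. In the case $L=K$, the universal property of $\pi$ (Proposition \ref{prop:univmodel}) applied to the one-point space $T=\mathrm{pt}$ gives $\rho=\mathrm{ev}_{t_0}\circ\pi$ for some $t_0\in T_K$, hence $\rho(a)=0$. For $L<K$, a character $\chi\colon C(X)\to\mathbb C$ (which exists because $X_{class}\neq\emptyset$) lets me pad $\rho$ into the $K$-dimensional representation $\tilde\rho:=\rho\oplus\chi^{\oplus(K-L)}\colon C(X)\to M_K(\mathbb C)$. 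The previous case applied to $\tilde\rho$ gives $\tilde\rho(a)=0$, and restricting to the first block yields $\rho(a)=0$. The main obstacle is precisely this padding step: without a character, a general representation of dimension $L<K$ could not be captured by the universal $K\times K$-model, so the hypothesis $X_{class}\neq\emptyset$ is essential to the proof.
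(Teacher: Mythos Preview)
Your proof is correct and follows essentially the same approach as the paper: both establish $\ker(\pi)=J_K$ by using pointwise evaluation on $T_K$ for the inclusion $J_K\subseteq\ker(\pi)$, and by padding an $L$-dimensional representation (with $L<K$) up to a $K$-dimensional one via a character of $C(X)$ (guaranteed by $X_{class}\neq\emptyset$) for the reverse inclusion. The only cosmetic difference is that the paper phrases the first inclusion contrapositively and does not separate out the case $L=K$ explicitly.
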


\begin{proof}
 We have to show that $Ker(\pi)=J_K$, the latter ideal being the intersection of the kernels of all matrix representations $C(X) \rightarrow M_L(\mathbb C)$, for any $ L \leq K$. For $a \not \in Ker(\pi)$, we see that $a \not \in J_K$ by evaluating at an appropriate element of $T_K$.

Conversely, let $a \in Ker(\pi)$. Let $\rho : C(X) \rightarrow M_L(\mathbb C)$ be a representation with $L \leq K$, and let $\varepsilon : C(X) \rightarrow \mathbb C$ be a representation. We extend $\rho$ to a representation $\rho' : C(X) \rightarrow M_K(\mathbb C)$ by letting, for any $b \in C(X)$,
$$\rho'(b) = \begin{pmatrix} \rho(b) & 0 \\
              0 & \varepsilon(b) I_{K-L}  
   \end{pmatrix}$$
and the universal property of the universal matrix model yields that $\rho'(a)=0$, since $\pi(a)=0$. Hence $\rho(a)=0$. 
We thus have $a \in J_K$, and $Ker(\pi)\subset J_K$, and the first statement is proved.
The last statement follows from the first one and Proposition \ref{prop:faithimplX=XK}
\end{proof}

\begin{proposition}\label{prop:univmodelalgebraic}
 Let $X\subset S^{N-1}_{\mathbb C,+}$ with $X$ algebraic and $X_{class} \not=\emptyset$. Then $X^{(K)}$ is algebraic as well.
\end{proposition}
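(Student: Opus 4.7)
My plan is to identify $C(X^{(K)})$ with the enveloping $C^*$-algebra of the $*$-subalgebra $\mathcal O(X^{(K)}) := q(\mathcal O(X))$, where $q : C(X) \twoheadrightarrow C(X^{(K)})$ is the canonical quotient. Because $X$ is algebraic, $\mathcal O(X)$ is a quotient of $\mathcal O(S^{N-1}_{\mathbb C,+})$ by a $*$-ideal of noncommutative polynomials, so $\mathcal O(X^{(K)})$ is of the same form. Hence once I show that $C(X^{(K)})$ is the enveloping $C^*$-algebra of $\mathcal O(X^{(K)})$, I obtain a presentation of $C(X^{(K)})$ as in Definition \ref{def:algv0}, and $X^{(K)}$ will be algebraic.

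The essential input is the embedding $C(X^{(K)}) \hookrightarrow M_K(C(T_K))$ from Proposition \ref{prop:kerunivmodel}. Since $M_K(C(T_K))$ satisfies the Amitsur--Levitzki identity $s_{2K} = 0$, so does the $*$-subalgebra $\mathcal O(X^{(K)})$. Given any $*$-representation $\phi : \mathcal O(X^{(K)}) \to B(H)$, the composition $\phi \circ q$ is a $*$-representation of $\mathcal O(X)$; since $X$ is algebraic, $C(X)$ is the enveloping $C^*$-algebra of $\mathcal O(X)$, so $\phi \circ q$ extends uniquely to a $C^*$-representation $\tilde\phi : C(X) \to B(H)$. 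The image $\tilde\phi(C(X))$ is the norm closure of $\phi(\mathcal O(X^{(K)}))$, and the joint norm continuity of the polynomial $s_{2K}$ ensures that $\tilde\phi(C(X))$ still satisfies $s_{2K} = 0$.

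I then invoke the converse of Amitsur--Levitzki for $C^*$-algebras: a $C^*$-algebra satisfying $s_{2K}=0$ has all irreducible representations of dimension at most $K$ (standard via Kaplansky density, since otherwise the identity would transfer to all of $B(H)$ for some $H$ of dimension $>K$, contradicting Amitsur--Levitzki). Letting $Y \subset X$ denote the noncommutative subspace corresponding to the quotient $C(X) \twoheadrightarrow \tilde\phi(C(X))$, Proposition \ref{prop:X=XK} applied to $Y \subset X$ gives $Y \subset X^{(K)}$, so $\tilde\phi$ factors through $C(X^{(K)})$. This yields the bound $\|\phi(a)\| \leq \|a\|_{C(X^{(K)})}$ for every $a \in \mathcal O(X^{(K)})$, establishing that the norm inherited from $C(X^{(K)})$ is the maximal $C^*$-seminorm on $\mathcal O(X^{(K)})$, which is exactly what is needed. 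The only non-formal ingredient is the $C^*$-algebraic converse of Amitsur--Levitzki; everything else is bookkeeping against the earlier propositions of this section.
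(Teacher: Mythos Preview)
Your argument is correct and follows essentially the same strategy as the paper: both hinge on the fact that the dense $*$-subalgebra inherits the Amitsur--Levitzki identity $s_{2K}=0$ from its embedding into $M_K(C(T_K))$, pass this identity to the relevant $C^*$-completion by continuity, and then invoke the $C^*$-algebraic converse (all irreducibles have dimension $\le K$) to conclude. The only difference is in packaging: the paper proves injectivity of the canonical map $C^*(\mathcal O(X)/\mathrm{Ker}\,\pi_0)\to M_K(C(T_K))$ by padding each separating irreducible up to size $K$ via a character $\varepsilon$ and routing it back through the universal model space $T_K$, whereas you verify the enveloping-$C^*$-algebra property directly by factoring an arbitrary $*$-representation of $\mathcal O(X^{(K)})$ through $C(X^{(K)})$ via Proposition~\ref{prop:X=XK}, which is a mild streamlining that avoids repeating the character-extension trick already used in Proposition~\ref{prop:kerunivmodel}.
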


\begin{proof}
 We retain the notation in the proof of Proposition \ref{prop:univmodel}, and consider the map
$\pi_0 : \mathcal O(X) \rightarrow M_K(C(T_K))$, $z_l \mapsto (x_{ij}(z_l))$. It induces a $*$-algebra map 
$$\tilde{\pi_0} : C^*(\mathcal O(X)/Ker(\pi_0)) \rightarrow M_K(C(T_K))$$
We need to show that $\tilde{\pi}_0$ is injective. Indeed, since the universal model factorizes 
$$\pi : C(X) \overset{p}\to C^*(\mathcal O(X)/Ker(\pi_0))\overset{\tilde{\pi}_0}\to M_K(C(T_K))$$
where $p$ is canonical surjection, we will get that $Ker(\pi)= Ker(p)$, and hence, according to the previous proposition, that $C(X^{(K)})= C(X)/Ker(p) =  C^*(\mathcal O(X)/Ker(\pi_0))$, showing that $X^{(K)}$ is indeed algebraic. 

Since $\mathcal O(X)/Ker(\pi_0)$ is isomorphic to a $*$-subalgebra of $M_K(C(T_K))$, it satisfies the standard Amitsur-Levitski polynomial identity $S_{2K}(x_1, \ldots , x_{2K})=0$, and by density so does  $C^*(\mathcal O(X)/Ker(\pi_0))$. Hence any irreducible representation of  $C^*(\mathcal O(X)/Ker(\pi_0))$ has dimension $\leq K$ (again see the proof of Proposition 3.6.3 in \cite{dix}). Thus if $a \in  C^*(\mathcal O(X)/Ker(\pi_0))$ is a nonzero element, we can, by the same reasoning as in the proof of the previous proposition, find a representation $\rho :  C^*(\mathcal O(X)/Ker(\pi_0)) \rightarrow M_K(\mathbb C)$ such that $\rho(a)\not=0$ (because a given algebra map $\varepsilon : C(X) \rightarrow \mathbb C$ induces an algebra map $C(T_K) \rightarrow \mathbb C$, $x_{ij}(a) \mapsto \delta_{ij} \varepsilon(a)$, which enables us to extend representations similarly as before). By construction the universal model space yields an algebra  map $M_K(C(T_K)) \rightarrow M_K(\mathbb C)$ whose composition with $\tilde{\pi_0}p=\pi$ is $\rho p$, so $\tilde{\pi_0}(a)\not=0$, and $\tilde{\pi}_0$ is injective.
\end{proof}

Summarizing the results of the section, we have proved:

\begin{theorem}
  Let $X\subset S^{N-1}_{\mathbb C,+}$ with $X$ algebraic and $X_{class} \not=\emptyset$.
Then we have an increasing sequence of algebraic submanifolds
$$X_{class}= X^{(1)}\subset X^{(2)}\subset X^{(3)}\subset\ldots\ldots\subset X$$
where $C(X^{(K)}) \subset M_K(C(T_K))$ is obtained by factorizing the universal matrix model. 
\end{theorem}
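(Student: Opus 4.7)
The plan is essentially bookkeeping: this theorem is a summary of Propositions \ref{prop:univmodel}--\ref{prop:univmodelalgebraic}, and I would assemble it directly from them rather than redo any analysis. The three items to verify are the chain of inclusions, the fact that each $X^{(K)}$ is algebraic, and the claimed factorization $C(X^{(K)}) \subset M_K(C(T_K))$ arising from the universal matrix model.

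First I would establish the inclusions. Since $J_K$ is defined as the intersection of the kernels of all representations $C(X) \to M_L(\mathbb C)$ with $L \leq K$, enlarging $K$ only enlarges the family of representations involved, hence $J_{K+1} \subset J_K$, and the corresponding quotient maps give surjections $C(X^{(K+1)}) \twoheadrightarrow C(X^{(K)})$, i.e. $X^{(K)} \subset X^{(K+1)}$. The terminal inclusion $X^{(K)} \subset X$ is built into the definition as a quotient. For the initial equality $X^{(1)} = X_{class}$, the ideal $J_1$ is the intersection of kernels of characters $C(X) \to \mathbb C$; since $X_{class} \neq \emptyset$ such characters exist, and they separate the points of the abelianization, so $J_1$ coincides with the commutator ideal, giving $C(X^{(1)}) = C(X)_{ab} = C(X_{class})$.

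Next I would invoke Proposition \ref{prop:univmodelalgebraic} directly to conclude that each $X^{(K)}$ is algebraic. Finally, for the factorization, Proposition \ref{prop:univmodel} supplies the universal $K \times K$-matrix model $\pi_K : C(X) \to M_K(C(T_K))$, and Proposition \ref{prop:kerunivmodel} identifies $\mathrm{Ker}(\pi_K) = J_K$; hence $C(X^{(K)}) = C(X)/\mathrm{Ker}(\pi_K)$ embeds into $M_K(C(T_K))$ as the image of $\pi_K$, which is the content of the last clause.

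Since every step is a citation of a result already established, there is no genuine obstacle; the only point worth flagging is the role of the standing hypothesis $X_{class} \neq \emptyset$. It is used both in the identification $X^{(1)} = X_{class}$ above and, more essentially, in the padding trick of Proposition \ref{prop:kerunivmodel} that extends an $L$-dimensional representation (with $L < K$) to a $K$-dimensional one by appending a character on the complementary block; without it the chain of inclusions could degenerate at the bottom and the identification $\mathrm{Ker}(\pi_K)=J_K$ would not go through.
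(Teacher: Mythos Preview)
Your proposal is correct and matches the paper's treatment exactly: the paper presents this theorem with the preamble ``Summarizing the results of the section, we have proved'' and gives no further argument, so the content is precisely the assembly of Propositions \ref{prop:univmodel}--\ref{prop:univmodelalgebraic} together with the elementary observations on $J_{K+1}\subset J_K$ and $J_1$ being the commutator ideal that you spell out. Your remark on where the hypothesis $X_{class}\neq\emptyset$ is actually used is also accurate and a nice clarification.
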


\section{Higher versions of half-liberated complex spheres}

In this section we  define, for any $K\geq 2$, a $K$-half-liberated sphere, and study its first basic properties.
As a warm-up, let us recall the definitions  of the various half-liberated spheres.

\begin{definition}
The noncommutative spaces $S^{N-1}_{\mathbb R,*} \subset S^{N-1}_{\mathbb C,**}\subset S^{N-1}_{\mathbb C,*}\subset S^{N-1}_{\mathbb C,+}$ defined by
\begin{eqnarray*}
C(S^{N-1}_{\mathbb R,*})&=&C(S^{N-1}_{\mathbb R,+})\Big/\Big<abc=cba,\forall a,b,c\in\{z_i\}\Big>\\
C(S^{N-1}_{\mathbb C,**})&=&C(S^{N-1}_{\mathbb C,+})\Big/\Big<abc=cba,\forall a,b,c\in\{z_i,z_i^*\}\Big>\\
C(S^{N-1}_{\mathbb C,*})&=&C(S^{N-1}_{\mathbb C,+})\Big/\Big<ab^*c=cb^*a,\forall a,b,c\in\{z_i\}\Big>
\end{eqnarray*}
are called respectively the half-liberated real sphere, the half-liberated complex sphere and the full half-liberated complex sphere.
\end{definition}

These spheres, which are obviously algebraic, arose as natural quantum homogeneous spaces over appropriate quantum groups.
\begin{enumerate}
 \item $S^{N-1}_{\mathbb R,*}$ corresponds to the half-liberated orthogonal quantum group $O_N^*$ from \cite{bsp}. It is known that $C(S^{N-1}_{\mathbb R,*})$ has a faithful $2\times 2$ matrix model \cite{bi1}, so that  $(S^{N-1}_{\mathbb R,*})^{(2)}=S^{N-1}_{\mathbb R,*}$, and the subspaces $X \subset S^{N-1}_{\mathbb R,*}$ are well understood.
\item  $S^{N-1}_{\mathbb C,**}$ corresponds to some half-liberated unitary quantum group $U_N^{**}$ from \cite{bdu}.
We have $S^{N-1}_{\mathbb C,**} \subset S^{2N-1}_{\mathbb R, *}$, and $C(S^{N-1}_{\mathbb C,**})$ has a faithful $2\times 2$ matrix model as well.
\item $S^{N-1}_{\mathbb C,*}$ corresponds to the full half-liberated unitary quantum group $U_N^*$ from \cite{bdd}, and is more mysterious.
\end{enumerate}

The following result will be our starting point  to define ``higher'' versions of $S^{N-1}_{\mathbb C,**}$.

\begin{proposition}\label{prop:incluhalf}
Let $X\subset S^{N-1}_{\mathbb C,+}$, with coordinates $z_1,\ldots,z_N$.
\begin{enumerate}
\item $X\subset S^{N-1}_{\mathbb C,*}$ precisely when $\{z_iz_j^*\}$ commute, and $\{z_i^*z_j\}$ commute as well.

\item $X\subset S^{N-1}_{\mathbb C,**}$ precisely when the variables $\{z_iz_j,z_iz_j^*,z_i^*z_j,z_i^*z_j^*\}$ all commute.
\end{enumerate}
\end{proposition}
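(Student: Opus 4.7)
The plan is to treat both parts by the same template: the forward directions follow from one or two applications of the defining triple relations (and their adjoints), while the backward directions use the sphere identities $\sum_m z_m z_m^* = \sum_m z_m^* z_m = 1$ to synthesize a triple relation from the assumed quadratic commutations.

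For the forward direction of (1) I would work from the defining relation $z_iz_j^*z_k = z_kz_j^*z_i$ and, taking adjoints, also $z_k^*z_jz_i^* = z_i^*z_jz_k^*$. Applying the first inside $(z_iz_j^*)(z_kz_l^*) = (z_iz_j^*z_k)z_l^*$ and then the second to the resulting middle triple $z_j^*z_iz_l^*$ gives the commutation of $\{z_iz_j^*\}$; the commutation of $\{z_i^*z_j\}$ is obtained symmetrically. For the forward direction of (2), starting from $abc=cba$ for $a,b,c\in\{z_i,z_i^*\}$, the commutation of two length-two words follows by applying the triple relation first to the last three letters of $z_a^{\epsilon_1}z_b^{\epsilon_2}z_c^{\epsilon_3}z_d^{\epsilon_4}$ and then to the first three, a short bookkeeping step with no surprises.

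The main obstacle is the backward direction of (1), where a three-term relation must be derived from relations on four letters. The idea is to insert $1=\sum_m z_m^*z_m$ on the right of $z_iz_j^*z_k$, producing $\sum_m (z_iz_j^*)(z_kz_m^*)z_m$; commuting the two bracketed factors uses precisely the hypothesis that $\{z_az_b^*\}$ commute, and regrouping the interior of the resulting $\sum_m z_kz_m^*z_iz_j^*z_m$ as $(z_m^*z_i)(z_j^*z_m)$ allows a second swap using the hypothesis that $\{z_a^*z_b\}$ commute. Reassembling and collapsing the sum via $\sum_m z_mz_m^*=1$ delivers $z_kz_j^*z_i$. The delicate point is that the resolution of the identity has to be inserted at exactly the right spot and with exactly the right variant ($z_m^*z_m$ versus $z_mz_m^*$) so that only the two types of commutations available by hypothesis are needed, and the unavailable ones (notably for $\{z_az_b\}$) are avoided.

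Once this template is in hand, the backward direction of (2) follows by running the same argument on $z_a^{\epsilon_1}z_b^{\epsilon_2}z_c^{\epsilon_3}$ for arbitrary signs $\epsilon_i\in\{1,*\}$: inserting $1=\sum_m z_m^*z_m$, performing two quadratic commutations that are now unconditionally available under the stronger hypothesis of (2), and collapsing by $\sum_m z_mz_m^*=1$ yields $z_c^{\epsilon_3}z_b^{\epsilon_2}z_a^{\epsilon_1}$. Since the two forward implications are formal and the same insertion trick handles both backward directions, no separate difficulty is expected for part (2) beyond the case analysis on the $\epsilon_i$'s, which is uniform.
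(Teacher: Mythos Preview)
Your proposal is correct and follows essentially the same approach as the paper: the forward directions use two applications of the triple relation (or its adjoint), and the backward directions insert a resolution of the identity $\sum_m z_m^*z_m=1$ and then perform the available quadratic commutations before collapsing via $\sum_m z_mz_m^*=1$. The only cosmetic difference is where the identity is inserted: the paper inserts $e^*e$ after the first letter, leading to the chain $ae^*eb^*c=ab^*ce^*e=ce^*ab^*e=cb^*ee^*a$, whereas you insert it at the right end, which happens to save one commutation; this is a harmless variation of the same trick.
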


\begin{proof}
Regarding the first assertion, the implication ``$\implies$'' follows from the following two computations, using the $ab^*c=cb^*a$ rule:
$$ab^*cd^*=cb^*ad^*=cd^*ab^*$$ 
$$a^*bc^*d=c^*ba^*d=c^*da^*b$$

As for the implication ``$\Longleftarrow$'', this is obtained as follows, by using the commutation assumptions in the statement, and by summing over $e=z_i$:
$$ae^*eb^*c=ab^*ce^*e=ce^*ab^*e=cb^*ee^*a\implies ab^*c=cb^*a$$

The proof of the second assertion is similar, because we can remove all the $*$ signs, except for those concerning $e^*$, and use the above computations with $a,b,c,d\in\{z_i,z_i^*\}$.
\end{proof}

We  now define, for any $K\geq 2$, a $K$-half-liberated sphere.

\begin{definition}
For $K\geq 2$,  the noncommutative space $ S^{N-1}_{\mathbb C,K}\subset S^{N-1}_{\mathbb C,+}$ defined by
$$C(S^{N-1}_{\mathbb C, K})= C(S^{N-1}_{\mathbb C,+})\Big/\Big< [z_{i_1}\cdots z_{i_K}, z_{j_{1}} \cdots z_{j_K}]=0
=  [z_{i_1} \cdots z_{i_K}, z_{j_1}^* \cdots z_{j_K}^*]\Big>$$
is called the $K$-half-liberated complex sphere.
\end{definition}

It is clear that these spheres are algebraic.
The definition also makes  sense at $K=1$, with $S^{N-1}_{\mathbb C,1}=S^{N-1}_{\mathbb C}$. As for the $2$-half-liberated complex sphere, it indeed coincides with the half-liberated complex sphere from the previous section.

\begin{proposition}\label{prop:Khalf}
 The following relations hold in $C(S_{\mathbb C, K}^{N-1})$:
\begin{enumerate}
\item $z_{i_1}(z_{i_2} \cdots z_{i_K})z_{i_{K+1}} =  z_{i_{K+1}}(z_{i_2} \cdots z_{i_K})z_{i_1}$,
\item $[z_iz_j^*,z_kz_l^*]=[z_i^*z_j,z_k^*z_l]=0 =[z_iz_j^*,z_k^*z_l]$,
\item $[z_{i_1}\cdots z_{i_K}, z_iz_j^*]=0= [z_{i_1}\cdots z_{i_K}, z_i^*z_j].$
\end{enumerate}
In particular we have $S_{\mathbb C, K}^{N-1} \subset S^{N-1}_{\mathbb C,*}$, and at $K=2$ we have 
$S_{\mathbb C, 2}^{N-1} = S^{N-1}_{\mathbb C,**}$
\end{proposition}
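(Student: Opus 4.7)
The key computational tool I will use is the iterated sphere identity: for each $m\geq 0$, summing over length-$m$ monomials $M=z_{n_1}\cdots z_{n_m}$ gives $\sum_M MM^*=\sum_M M^*M=1$, obtained by induction from $\sum_iz_iz_i^*=\sum_iz_i^*z_i=1$. This lets me splice a resolution of $1$ between two variables in a mixed word so as to split a mixed factor into the product of two pure length-$K$ subwords of types $z\cdots z$ and $z^*\cdots z^*$, which are the only words to which the defining relations apply directly. Call these defining relations (R1) and (R2); note that (R1) together with its $*$-adjoint and (R2) imply that any two pure length-$K$ monomials commute. My plan is to prove (3) first, then (2), then the inclusion via Proposition \ref{prop:incluhalf}, then (1), and finally the $K=2$ equality.

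For (3), I write $Pz_iz_j^*=\sum_M P(z_iM)(M^*z_j^*)$ with $M$ of length $K-1$ in the $z_l$'s; both $z_iM$ and $M^*z_j^*$ are pure length $K$ and hence commute with $P$ by (R1), (R2), so the sum collapses to $z_iz_j^*P$. The commutator $[P,z_i^*z_j]$ is entirely analogous, using instead $N$ of length $K-1$ in the $z_l^*$'s. For (2), once (3) and its $*$-adjoint are in hand, the same splicing applied to the second factor works: $z_kz_l^*=\sum_M(z_kM)(M^*z_l^*)$ writes it as a sum of products of pure length-$K$ words, each of which commutes with $z_iz_j^*$; the remaining two commutators in (2) are handled by the mirror decomposition $z_k^*z_l=\sum_N(z_k^*N)(N^*z_l)$ with $N$ of length $K-1$ in the $z_l^*$'s. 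Proposition \ref{prop:incluhalf} then gives the inclusion $S^{N-1}_{\mathbb C,K}\subset S^{N-1}_{\mathbb C,*}$, and in particular the relation $ab^*c=cb^*a$ is available in $C(S^{N-1}_{\mathbb C,K})$.

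For (1), let $X=z_{i_2}\cdots z_{i_K}$; inserting $\sum_LLL^*=1$ for $L$ of length $K-1$ in the $z_l$'s and applying (R1) to commute the pure length-$K$ factors $z_aX$ and $z_bL$ gives $z_aXz_b=\sum_Lz_bLz_aXL^*$. A parallel computation using (R2) and $\sum_cz_c^*z_c=1$ shows $z_aXL^*=L^*\sigma(z_aX)$, where $\sigma(Y)=\sum_cz_c^*Yz_c$; substituting and collapsing $\sum_LLL^*=1$ leads to $z_aXz_b=\sum_cz_bz_c^*z_aXz_c$, and the same derivation with $a,b$ swapped yields $z_bXz_a=\sum_cz_az_c^*z_bXz_c$. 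The relation $z_bz_c^*z_a=z_az_c^*z_b$ valid in $S^{N-1}_{\mathbb C,*}$ now makes the two sums coincide, proving (1). For the $K=2$ equality, the inclusion $S^{N-1}_{\mathbb C,**}\subset S^{N-1}_{\mathbb C,2}$ is immediate from $abcd=cbad=cdab$ via two applications of $abc=cba$. For the converse, I must derive $abc=cba$ for all $a,b,c\in\{z_i,z_i^*\}$; up to $*$-adjoint this reduces to four orbits, represented by $(zzz)$, $(zz^*z)$, $(zzz^*)$ and $(z^*zz)$. The first two are (1) at $K=2$ and the defining relation of $S^{N-1}_{\mathbb C,*}$; for $(zzz^*)$ I write $z_az_bz_c^*=\sum_dz_az_bz_c^*z_dz_d^*$, commute $z_az_b$ past $z_c^*z_d$ using (3), and apply (1) at $K=2$ to swap $z_dz_az_b$ into $z_bz_az_d$, collapsing the sum to $z_c^*z_bz_a$; the $(z^*zz)$ case is the mirror argument, inserting $1=\sum_dz_dz_d^*$ on the left and using (R2) together with (1) at $K=2$. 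The main obstacle throughout is the bookkeeping of which resolution $\sum MM^*$ or $\sum M^*M$ (with $M$ in $z_l$'s or $z_l^*$'s) to insert where, so that the subwords produced are pure length-$K$ of the type required by the available commutation relations.
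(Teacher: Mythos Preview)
Your argument is correct. For items (2) and (3) it is essentially the paper's proof: the paper phrases the same splicing trick by saying that $z_iz_j^*$ and $z_i^*z_j$ lie in the commutative $C^*$-subalgebra $A$ generated by the pure length-$K$ words $z_{i_1}\cdots z_{i_K}$, but the underlying computation is exactly your insertion of $\sum_M MM^*=1$ with $M$ of length $K-1$.

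Where you diverge is in (1) and in the $K=2$ identification. For (1) the paper gives a two-step computation using only (3): inserting $\sum_j z_jz_j^*=1$ once and commuting $z_{i_2}\cdots z_{i_K}z_j$ past $z_j^*z_{i_{K+1}}$, then $z_{i_1}z_j^*$ past $z_{i_{K+1}}z_{i_2}\cdots z_{i_K}$, gives
\[
z_{i_1}Xz_{i_{K+1}}=\sum_j z_{i_1}z_j^*z_{i_{K+1}}Xz_j=\sum_j z_{i_{K+1}}Xz_{i_1}z_j^*z_j=z_{i_{K+1}}Xz_{i_1}.
\]
Your route via $\sigma(Y)=\sum_c z_c^*Yz_c$ and the $S^{N-1}_{\mathbb C,*}$ relation $z_bz_c^*z_a=z_az_c^*z_b$ is valid but longer; it buys nothing extra here, though the operator $\sigma$ is in fact the map $\gamma$ (up to the obvious variant) that the paper later exploits in Lemma~\ref{gamma}. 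For the equality $S^{N-1}_{\mathbb C,2}=S^{N-1}_{\mathbb C,**}$, the paper simply invokes Proposition~\ref{prop:incluhalf}(2): once (2), (3), (R1), (R2) and their adjoints are known at $K=2$, all degree-two monomials in $\{z_i,z_i^*\}$ commute, which is the characterization of $S^{N-1}_{\mathbb C,**}$ given there. Your explicit case analysis of the $abc=cba$ relations is correct but unnecessary given that characterization.
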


\begin{proof}
Let $A$ be the $C^*$-subalgebra of $C(S_{\mathbb C, K}^{N-1})$ generated by the elements of the form $z_{i_1}\cdots z_{i_K}$. By construction $A$ is a commutative $C^*$-algebra, since it is generated by elements that pairwise commute. We have 
$$z_iz_j^* = \sum_{\alpha_1, \ldots , \alpha_{K-1}} z_i z_{\alpha_1} \cdots z_{\alpha_{K-1}} z_{\alpha_{K-1}}^* \cdots z_{\alpha_{1}}^*z_j^*$$
and hence $z_iz_j^* \in A$. Similarly $z_i^*z_j \in A$.
 Hence the commutativity of $A$ ensures that the elements $z_{i_1}\cdots z_{i_K}$, $z_i^*z_j$ and $z_iz_j^*$ all commute, and this gives the second and third relations. We get 
\begin{align*}z_{i_1}(z_{i_2} \cdots z_{i_K})z_{i_{K+1}} & =\sum_j z_{i_1}z_{i_2} \cdots z_{i_K}z_jz_j^*z_{i_{K+1}}
=\sum_jz_{i_1}z_j^*z_{i_{K+1}}z_{i_2} \cdots z_{i_K}z_j \\
&= \sum_j z_{i_{K+1}}z_{i_2} \cdots z_{i_K}z_{i_1}z_j^*z_j = z_{i_{K+1}}(z_{i_2} \cdots z_{i_K})z_{i_1}
\end{align*}
which gives the first relations. The last assertion then follows from Proposition \ref{prop:incluhalf}.
\end{proof}

We will see in Section 5 that even more commutation relations hold in $C(S_{\mathbb C, K}^{N-1})$.

\begin{remark}\label{rem:nonreal}
It would of course be possible to define a real version by  $$C(S^{N-1}_{\mathbb R, K})= C(S^{N-1}_{\mathbb R,+})\Big/\Big< [z_{i_1}\cdots z_{i_K}, z_{j_{1}} \cdots z_{j_K}]=0\Big>$$
For $K$ even, Propositions \ref{prop:incluhalf} and \ref{prop:Khalf} give that $C(S^{N-1}_{\mathbb R, K})=C(S^{N-1}_{\mathbb R, *})$, while for $K$ odd it is not difficult to see that $C(S^{N-1}_{\mathbb R, K})=C(S^{N-1}_{\mathbb R})$. Hence nothing new is obtained in the real case. In the complex case, we will see in Corollary \ref{cor:irrSNK} that for $K \not = K'$, the $C^*$-algebras $C(S^{N-1}_{\mathbb C, K})$ and $C(S^{N-1}_{\mathbb C, K'})$ are not isomorphic.
\end{remark}

Our goal now is to construct a faithful matrix model for $S_{\mathbb C, K}^{N-1}$. We will use the following construction. Consider $(S^{N-1}_{\mathbb C})^K$, the product of $K$ copies of $S^{N-1}_{\mathbb C}$, that we endow
with  the action of the cyclic group $\mathbb Z_K=\langle \tau\rangle$ given by cyclic permutation of the factors:
$$\tau(z_0, \ldots z_{K-1}) = (z_{K-1}, z_0, \ldots, z_{K-2})$$
Denote  by $a_{i,c}$, $1\leq i \leq N$, $0 \leq c \leq K-1$,  the canonical generators of  $C((S^{N-1}_{\mathbb C})^K)$, with $a_{ic}(z_0,  \cdots, z_{K-1}) = (z_{c})_i$  (the second indices are considered modulo $K$).

The above  action of $\mathbb Z_K$ induces a $C^*$-action on $C((S^{N-1}_{\mathbb C})^K)$, $\tau (a_{i,c})=a_{i,c+1}$. We thus form the crossed product $C^*$-algebra $ C((S^{N-1}_{\mathbb C})^K) \rtimes \mathbb Z_K$.

\begin{proposition}\label{prop:mapcrossed}
 We have a $*$-algebra map
\begin{align*}
\pi : C(S^{N-1}_{\mathbb C, K}) &\longrightarrow C((S^{N-1}_{\mathbb C})^K) \rtimes \mathbb Z_K  \\
z_i &\longmapsto a_{i,0} \otimes \tau
\end{align*}
\end{proposition}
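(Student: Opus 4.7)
The plan is to invoke the universal property defining $C(S^{N-1}_{\mathbb C, K})$: it suffices to produce elements $A_i := a_{i,0} \otimes \tau \in C((S^{N-1}_{\mathbb C})^K) \rtimes \mathbb Z_K$ satisfying (i) the sphere relations $\sum_i A_iA_i^* = 1 = \sum_i A_i^* A_i$, and (ii) the commutation relations $[A_{i_1}\cdots A_{i_K}, A_{j_1} \cdots A_{j_K}] = 0 = [A_{i_1}\cdots A_{i_K}, A_{j_1}^*\cdots A_{j_K}^*]$. First I would record the crossed product calculus: writing elements as $a \otimes \tau^r$ with multiplication $(a \otimes \tau^r)(b \otimes \tau^s) = a\, \tau^r(b) \otimes \tau^{r+s}$, involution $(a \otimes \tau^r)^* = \tau^{-r}(a^*) \otimes \tau^{-r}$, and the $\mathbb Z_K$-action $\tau(a_{i,c}) = a_{i,c+1}$ on generators.

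For (i), I would compute $A_i^* = \tau^{-1}(a_{i,0}^*) \otimes \tau^{-1} = a_{i,K-1}^* \otimes \tau^{-1}$, and then
\begin{equation*}
A_i A_i^* = a_{i,0}\,\tau(a_{i,K-1}^*) \otimes \tau\cdot\tau^{-1} = a_{i,0}\, a_{i,0}^* \otimes 1.
\end{equation*}
Summing over $i$ and using that $(a_{1,0},\dots,a_{N,0})$ parametrizes the first copy of $S^{N-1}_{\mathbb C}$, we get $\sum_i A_iA_i^* = 1$; analogously $\sum_i A_i^*A_i = \sum_i a_{i,K-1}^* a_{i,K-1} \otimes 1 = 1$.

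The decisive step is (ii), and this is where the choice $|\mathbb Z_K| = K$ is doing all the work. Iterating the multiplication rule one obtains
\begin{equation*}
A_{i_1} A_{i_2}\cdots A_{i_K} = a_{i_1,0}\,\tau(a_{i_2,0})\,\tau^2(a_{i_3,0})\cdots \tau^{K-1}(a_{i_K,0}) \otimes \tau^K = a_{i_1,0}\,a_{i_2,1}\cdots a_{i_K,K-1}\otimes 1,
\end{equation*}
and similarly a $K$-fold product of $A_{j_\ell}^*$'s equals $a_{j_1,K-1}^*\,a_{j_2,K-2}^*\cdots a_{j_K,0}^* \otimes 1$. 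Thus every length-$K$ monomial in the $A_i$'s and $A_i^*$'s lands in the subalgebra $C((S^{N-1}_{\mathbb C})^K) \otimes 1$, which is commutative, so the required commutators vanish automatically.

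The only real conceptual point is this observation that the $\mathbb Z_K$-grading of the crossed product forces length-$K$ words to sit in the abelian degree-$0$ piece; everything else is bookkeeping in the semidirect product, and there is no genuine obstacle. Once (i) and (ii) are verified, the universal property gives the $*$-algebra map $\pi$, which then extends to a $C^*$-morphism since the target is a $C^*$-algebra and $C(S^{N-1}_{\mathbb C,K})$ is the enveloping $C^*$-algebra of its coordinate $*$-algebra.
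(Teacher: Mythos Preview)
Your proof is correct and follows essentially the same route as the paper: verify that the elements $a_{i,0}\otimes\tau$ satisfy the sphere relations and that length-$K$ monomials land in the commutative subalgebra $C((S^{N-1}_{\mathbb C})^K)\otimes 1$, then invoke the universal property. The paper's version is slightly terser but the computations and the key observation are identical.
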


\begin{proof}
 The existence of $\pi$ follows from the verification that the elements  $a_{i,0} \otimes \tau$ satisfy the defining relations of $C(S^{N-1}_{\mathbb C, K})$. We have
$$\sum_i(a_{i,0} \otimes \tau)(a_{i,0} \otimes \tau)^* = \sum_i (a_{i,0} \otimes \tau)(a_{i,K-1}^* \otimes \tau^{-1})= \sum_i a_{i,0} a_{i,0}^*\otimes 1=1 \otimes 1$$
and similarly
$$\sum_i(a_{i,0} \otimes \tau)^*(a_{i,0} \otimes \tau) = 1 \otimes 1$$
We also have 
$$a_{i_1,0} \otimes \tau \cdots a_{i_{K},0} \otimes \tau = a_{i_1,0} a_{i_2,1} \dots a_{i_{K},K-1}\otimes 1$$
and $$(a_{i_1,0} \otimes \tau)^* \cdots (a_{i_{K},0} \otimes \tau)^* = a_{i_1,K-1}^* a_{i_2,K-2}^* \cdots a_{i_{K},0}^*\otimes 1$$
We conclude easily from these identities.
\end{proof}

To prove the injectivity of the above map $\pi$, we will need some auxiliary material, developed in the next section.

\section{Pure tensors}

In this section we establish some technical results, for later use, in order to prove the injectivity of the map in Proposition \ref{prop:mapcrossed}. 

Let $V,W$ be finite dimensional vector spaces. Recall that an element $X\in V\otimes W$ is said to be a pure tensor if $X=v\otimes w$ with $v\in V\setminus \{0\},w\in W\setminus \{0\}$. We denote by $\mathcal P(V\otimes W)$ the set of pure tensors. 
It is immediate that $\mathcal P(V\otimes W)$ can be identified with the Segre variety $\Sigma_{V,W}$ \cite{har}, that is the image of the Segre map
\begin{align*}
\sigma :  \mathbb P(V) \times \mathbb P(W) &\longrightarrow\mathbb P(V \otimes W) \\
([v], [w]) &\longmapsto [v \otimes w] 
\end{align*}
More generally now, if $V_1,\ldots,V_K$ are finite dimensional vector spaces, we say that $X\in V_1\otimes\cdots\otimes V_K$ is a pure tensor if $X=v_1\otimes\cdots\otimes v_K$, with $v_1\in V_1\setminus \{0\},\ldots,v_K \in V_K\setminus \{0\}$, and we denote by $\mathcal P(V_1\otimes\cdots\otimes V_K)$ the set of pure tensors.

Working now in $(\mathbb C^N)^{\otimes K}$ endowed with its canonical basis, the following result characterizes the pure tensors:

\begin{lemma}\label{lemm:pureKcoor} 
 Let $r \in (\mathbb C^N)^{\otimes K}$ with 
$$r= \sum_{i_1, \ldots , i_K} r_{i_1, \ldots , i_K} e_{i_1} \otimes \cdots \otimes e_{i_K}$$
Then $r \in \mathcal P((\mathbb C^N)^{\otimes K})$ if and only if 
$$r_{i_1, i_2, \ldots ,i_K} r_{j_1, j_2, \ldots ,j_K} = r_{l_1, l_2 ,\ldots ,l_K} r_{k_1, k_2 \ldots ,k_K}$$
whenever $\{i_1,j_1\}=\{l_1,k_1\}, \{i_2,j_2\}=\{l_2,k_2\}, \ldots , \{i_K,j_K\}= \{l_K, j_K\}$. 
\end{lemma}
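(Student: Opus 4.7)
The forward direction is an immediate computation. For $r=v_1\otimes\cdots\otimes v_K$ we have $r_{i_1,\ldots,i_K}=\prod_s(v_s)_{i_s}$, so
$$r_{i_1,\ldots,i_K}\,r_{j_1,\ldots,j_K}\;=\;\prod_{s=1}^{K}(v_s)_{i_s}(v_s)_{j_s},$$
and each factor $(v_s)_{i_s}(v_s)_{j_s}$ depends only on the multiset $\{i_s,j_s\}$. Replacing the pairs by any $(l_s,k_s)$ with $\{l_s,k_s\}=\{i_s,j_s\}$ at every position therefore leaves the product unchanged.

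For the converse I would restrict to $r\neq 0$ (the zero tensor is not in $\mathcal{P}$ by the stated definition), fix a multi-index $a=(a_1,\ldots,a_K)$ with $c:=r_a\neq 0$, and introduce the ``frozen slice'' vectors
$$(v_s)_i\;:=\;r_{a_1,\ldots,a_{s-1},\,i,\,a_{s+1},\ldots,a_K}\in\mathbb{C}^N,\qquad 1\leq s\leq K.$$
The target is the single coordinatewise identity
$$r_{i_1,\ldots,i_K}\cdot c^{K-1}\;=\;\prod_{s=1}^{K}(v_s)_{i_s},$$
which is equivalent to $r=c^{-(K-1)}\,v_1\otimes\cdots\otimes v_K$ and hence to $r$ being a pure tensor.

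I would establish this identity by peeling off one coordinate at a time. Applying the hypothesis to the pair $(i_1,\ldots,i_K)$ and $(a_1,\ldots,a_K)$ with simultaneous swap in positions $2,\ldots,K$ gives
$$r_{i_1,i_2,\ldots,i_K}\cdot c\;=\;r_{i_1,a_2,\ldots,a_K}\cdot r_{a_1,i_2,\ldots,i_K}\;=\;(v_1)_{i_1}\cdot r_{a_1,i_2,\ldots,i_K}.$$
Multiplying by another copy of $c$ and applying the hypothesis to $(a_1,i_2,\ldots,i_K)$ and $a$, swapping only positions $3,\ldots,K$, extracts $(v_2)_{i_2}$ and leaves the residual $r_{a_1,a_2,i_3,\ldots,i_K}$. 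Iterating this move $K-1$ times extracts $(v_1)_{i_1},\ldots,(v_{K-1})_{i_{K-1}}$ in succession, while the final residual $r_{a_1,\ldots,a_{K-1},i_K}$ is by definition $(v_K)_{i_K}$, which yields the identity.

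The argument is really just careful bookkeeping of indices; the main (very mild) obstacle is to check at each stage that the chosen coordinate swap genuinely preserves the multiset condition, which is immediate since exchanging entries between two multi-indices at any subset of positions leaves the coordinate-wise multisets unchanged.
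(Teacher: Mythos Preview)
Your argument is correct. The forward direction is immediate, and for the converse your ``peeling'' procedure is valid: at each step the swap you perform manifestly preserves every coordinate-wise multiset, so the hypothesis applies, and after $K-1$ applications you obtain $r_{i_1,\ldots,i_K}\,c^{K-1}=\prod_s (v_s)_{i_s}$. (One should also note each $v_s\neq 0$, but this is free since $(v_s)_{a_s}=r_a=c\neq 0$.) Your observation that the case $r=0$ must be excluded is also to the point; the relations hold trivially for the zero tensor, which is not pure by the paper's convention.

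The paper's own proof is different in style: it simply identifies the $K=2$ equations with the classical defining equations of the Segre variety (citing Harris) and then invokes induction on $K$ without details. Your route is more elementary and entirely self-contained---it avoids both the external reference and the inductive reduction by constructing the tensor factors directly from slices of $r$ through a fixed nonzero coordinate. The paper's approach has the advantage of placing the result in its natural algebro-geometric context (the Segre embedding), whereas yours has the advantage of being an explicit, verifiable computation that does not require the reader to unpack what ``easily shown by induction'' means.
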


\begin{proof}
 At $K=2$, the given equations are those that define the  Segre variety as an algebraic variety, see e.g. the top of page 26 in \cite{har}. The result at $K>2$ is easily shown by induction.
\end{proof}

We now endow $\mathbb C^N$ with its canonical Hilbert space structure.

\begin{proposition}\label{prop:prespure}
 The set $\mathcal P_u((\mathbb C^N)^{\otimes K})$ of pure tensors in $(\mathbb C^N)^{\otimes K}$ of norm $1$ is a compact subspace of $(\mathbb C^N)^{\otimes K}$, and $C(\mathcal P_u((\mathbb C^N)^{\otimes K}))$ is isomorphic to the universal commutative $C^*$-algebra with generators $r_{i_1, \ldots , i_K}$,  with $i_1, \ldots , i_K \in \{1, \ldots ,  N\}$, subject to the relations 
$$\sum_{i_1, \ldots , i_K} r_{i_1, i_2, \ldots ,i_K} r_{i_1, i_2, \ldots ,i_K}^*=1$$
$$r_{i_1, i_2, \ldots ,i_K} r_{j_1, j_2, \ldots ,j_K} = r_{l_1, l_2 ,\ldots ,l_K} r_{k_1, k_2 \ldots ,k_K}$$ 
whenever $\{i_1,j_1\}=\{l_1,k_1\}, \{i_2,j_2\}=\{l_2,k_2\}, \cdots , \{i_K,j_K\}= \{l_K, j_K\}$. 
\end{proposition}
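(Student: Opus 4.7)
The plan is to apply Gelfand duality: construct the universal commutative $C^*$-algebra $A$ with the stated generators and relations, produce a canonical surjection $\varphi : A \to C(\mathcal P_u((\mathbb C^N)^{\otimes K}))$, and identify $\mathrm{Spec}(A)$ with $\mathcal P_u((\mathbb C^N)^{\otimes K})$ using Lemma \ref{lemm:pureKcoor}.

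First I would establish compactness. The tensor product map $\sigma : (S^{N-1}_{\mathbb C})^K \to (\mathbb C^N)^{\otimes K}$, $(v_1,\ldots,v_K) \mapsto v_1 \otimes \cdots \otimes v_K$, is continuous, and its image is exactly $\mathcal P_u((\mathbb C^N)^{\otimes K})$: any unit pure tensor $w_1 \otimes \cdots \otimes w_K$ has $\prod_i \|w_i\| = 1$, so replacing each $w_i$ by $w_i/\|w_i\|$ produces the same tensor with unit factors. Hence $\mathcal P_u((\mathbb C^N)^{\otimes K})$ is a continuous image of a compact Hausdorff space, hence compact. Next, the normalization relation $\sum_I r_I r_I^* = 1$ forces $\|r_I\| \leq 1$ in every $C^*$-seminorm, so the universal commutative $C^*$-algebra $A$ is well-defined, and $A = C(X)$ for $X = \mathrm{Spec}(A)$.

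The coordinate functions $\widetilde r_{i_1,\ldots,i_K} : r \mapsto r_{i_1,\ldots,i_K}$ on $\mathcal P_u((\mathbb C^N)^{\otimes K})$ satisfy the normalization by definition of the unit sphere, the commutation relations trivially, and the quadratic Segre relations by the ``only if'' direction of Lemma \ref{lemm:pureKcoor}. The universal property then produces a $*$-homomorphism $\varphi : A \to C(\mathcal P_u((\mathbb C^N)^{\otimes K}))$, $r_{i_1,\ldots,i_K} \mapsto \widetilde r_{i_1,\ldots,i_K}$, which is surjective by Stone--Weierstrass (the $\widetilde r_{i_1,\ldots,i_K}$ separate points, and the image of a $C^*$-morphism is closed and $*$-stable). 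Dually, $\varphi$ corresponds to a continuous injection $j : \mathcal P_u((\mathbb C^N)^{\otimes K}) \hookrightarrow X$, and the task reduces to proving that $j$ is surjective.

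To this end, given a character $\chi : A \to \mathbb C$, set $\lambda_{i_1,\ldots,i_K} = \chi(r_{i_1,\ldots,i_K})$ and form
$$r = \sum_{i_1,\ldots,i_K} \lambda_{i_1,\ldots,i_K} \, e_{i_1} \otimes \cdots \otimes e_{i_K} \in (\mathbb C^N)^{\otimes K}.$$
Applying $\chi$ to the normalization relation gives $\|r\|^2 = \sum_I |\lambda_I|^2 = 1$, so $r \neq 0$; its coordinates satisfy the Segre relations of Lemma \ref{lemm:pureKcoor}, so by the ``if'' direction $r \in \mathcal P_u((\mathbb C^N)^{\otimes K})$, and $\chi$ is evaluation at $r$. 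Thus $j$ is a continuous bijection between compact Hausdorff spaces, hence a homeomorphism, and $\varphi$ is an isomorphism. The real content of the argument lies in Lemma \ref{lemm:pureKcoor}, the classical fact that the Segre variety is cut out by the quadratic minor relations; given that input, the present proposition is a direct Gelfand duality exercise, the only point requiring some care being the extraction of unit factors in the compactness step.
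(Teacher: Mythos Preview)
Your proof is correct and follows essentially the same route as the paper: both arguments reduce the isomorphism to Gelfand duality together with Lemma~\ref{lemm:pureKcoor}, identifying characters of the universal algebra with points of $\mathcal P_u((\mathbb C^N)^{\otimes K})$. The only minor difference is the compactness step: the paper argues that $\mathcal P_u((\mathbb C^N)^{\otimes K})$ is closed (via Lemma~\ref{lemm:pureKcoor}) and bounded, whereas you exhibit it as the continuous image of $(S^{N-1}_{\mathbb C})^K$; your version is equally valid and in fact anticipates the surjectivity half of Lemma~\ref{lemm:projpure}.
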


\begin{proof}
 It follows from Lemma \ref{lemm:pureKcoor} that $\mathcal P_u((\mathbb C^N)^{\otimes K})$ is closed in $(\mathbb C^N)^{\otimes K}$, and hence $\mathcal P_u((\mathbb C^N)^{\otimes K})$ is a closed and bounded subset of $(\mathbb C^N)^{\otimes K}$, so is compact. Now let $r= \sum_{i_1, \ldots , i_K} r_{i_1, \ldots , i_K} e_{i_1} \otimes \cdots \otimes e_{i_K}$ satisfying the relations in the statement. Then $r$ is a pure tensor by Lemma \ref{lemm:pureKcoor}, and 
$$||r||^2= \sum_{i_1, \ldots , i_K} r_{i_1, i_2, \ldots ,i_K} \overline{r_{i_1, i_2, \ldots ,i_K}}=1$$
Thus $r$ is pure tensor of norm $1$, and the result follows from Gelfand duality.
\end{proof}

We now link pure tensors and spheres. Let $\mathbb T^{K-1}$ be the subgroup of $\mathbb T^K$ formed by elements $(\lambda_1,\ldots ,\lambda_K)$ satisfying $\lambda_1\cdots\lambda_K=1$. There is a natural continuous action of $\mathbb T^{K-1}$ on $(S^{N-1}_{\mathbb C})^K$, by componentwise multiplication. With this convention, we have:

\begin{lemma}\label{lemm:projpure}
 The map 
$$
 (S^{N-1}_{\mathbb C})^K  \longrightarrow \mathcal P_u((\mathbb C^N)^{\otimes K}), \quad
(z_1, \ldots, z_K) \longmapsto z_1 \otimes \cdots \otimes z_K$$
induces an homeomorphism between $(S^{N-1}_{\mathbb C})^K/\mathbb T^{K-1}$ and $\mathcal P_u((\mathbb C^N)^{\otimes K})$
\end{lemma}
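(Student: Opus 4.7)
The plan is to verify in turn that the map is well defined, $\mathbb{T}^{K-1}$-invariant, surjective onto $\mathcal{P}_u((\mathbb{C}^N)^{\otimes K})$, and that the induced map on the quotient is injective; then a standard compactness argument upgrades the resulting continuous bijection to a homeomorphism.

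First, if $z_1,\ldots,z_K \in S^{N-1}_{\mathbb{C}}$, then $z_1 \otimes \cdots \otimes z_K$ is nonzero with $\|z_1 \otimes \cdots \otimes z_K\| = \prod_i \|z_i\| = 1$, so the map indeed lands in $\mathcal{P}_u((\mathbb{C}^N)^{\otimes K})$. Next, for $(\lambda_1,\ldots,\lambda_K) \in \mathbb{T}^{K-1}$ (i.e.\ $\lambda_1\cdots\lambda_K = 1$), we have
$$(\lambda_1 z_1) \otimes \cdots \otimes (\lambda_K z_K) = (\lambda_1\cdots\lambda_K)\, z_1 \otimes \cdots \otimes z_K = z_1 \otimes \cdots \otimes z_K,$$
so the map factors through the quotient $(S^{N-1}_{\mathbb{C}})^K / \mathbb{T}^{K-1}$.

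For surjectivity, any unit pure tensor can be written $v_1 \otimes \cdots \otimes v_K$ with each $v_i \neq 0$; setting $w_i = v_i/\|v_i\|$ and $c = (\|v_1\|\cdots\|v_K\|)^{-1}$, one has $w_1 \otimes \cdots \otimes w_K = c\, v_1 \otimes \cdots \otimes v_K$, and the norm condition forces $c = 1$. So every element of $\mathcal{P}_u((\mathbb{C}^N)^{\otimes K})$ is of the form $w_1 \otimes \cdots \otimes w_K$ with $w_i \in S^{N-1}_{\mathbb{C}}$. For injectivity on the quotient, suppose $z_1 \otimes \cdots \otimes z_K = w_1 \otimes \cdots \otimes w_K$ with all entries in $S^{N-1}_{\mathbb{C}}$. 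The standard uniqueness of rank-one tensor decompositions (proved by induction on $K$, using at each step that a nonzero simple tensor $v \otimes y$ equals $v' \otimes y'$ only when $v,v'$ and $y,y'$ are proportional) yields scalars $\lambda_i \in \mathbb{C}^*$ with $w_i = \lambda_i z_i$. Since both sides are unit vectors, $|\lambda_i| = 1$, and comparing the two tensors gives $\prod_i \lambda_i = 1$, so $(\lambda_1,\ldots,\lambda_K) \in \mathbb{T}^{K-1}$ and the two tuples represent the same orbit.

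Finally, the map $(z_1,\ldots,z_K) \mapsto z_1 \otimes \cdots \otimes z_K$ is continuous, so the induced map on the quotient is a continuous bijection. The source $(S^{N-1}_{\mathbb{C}})^K / \mathbb{T}^{K-1}$ is compact (quotient of a compact space by a continuous group action) and Hausdorff (the acting group is compact, so orbits are closed and the quotient separates points), and the target is Hausdorff as a subspace of $(\mathbb{C}^N)^{\otimes K}$; hence the continuous bijection is automatically a homeomorphism. The only nontrivial input is the uniqueness of the decomposition of a pure tensor up to the $\mathbb{T}^{K-1}$-scaling, which is the point one has to take some care with.
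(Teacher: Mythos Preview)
Your proof is correct and follows essentially the same route as the paper: verify continuity, $\mathbb T^{K-1}$-invariance, surjectivity via normalization, injectivity on orbits via the uniqueness (up to scalars) of pure tensor factorizations, and conclude by the compact--Hausdorff argument. The paper carries out the injectivity step slightly more explicitly, peeling off one factor at a time using linear forms, but this is exactly the ``standard uniqueness'' you invoke; your version is in fact a bit more careful about why the quotient is Hausdorff.
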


\begin{proof}
 The map in the statement is clearly continuous. Consider now an arbitrary element $v_1\otimes\ldots\otimes v_K\in\mathcal P_u((\mathbb C^N)^{\otimes K})$. Since this element has norm 1, we have:
$$v_1\otimes\ldots\otimes v_K=\frac{1}{||v_1||}v_1\otimes\ldots\otimes\frac{1}{||v_K||}v_K$$
Thus this element belongs to the image of our map, and our map is surjective.

It is clear that the image of two elements lying in the same $\mathbb T^{K-1}$-orbit is the same. Conversely, assume $z_1\otimes\ldots\otimes z_K =z_1'\otimes\ldots\otimes z_K'$. If $z_K,z'_K$ are not colinear, by using appropriate linear forms, we obtain $z_1\otimes\ldots\otimes z_{K-1}=0$, contradicting the norm 1 property. Thus there exists $\alpha_K\in\mathbb T$ such that $z'_K=\alpha_Kz_K$, and by using again an appropriate linear form we see that $z_1\otimes\ldots\otimes z_{K-1}=z'_1 \otimes\ldots\otimes\alpha_Kz'_{K-1}$. Continuing this process, we see that $(z_1,\ldots,z_K)$ and $(z'_1,\ldots,z'_K)$ belong to the same $\mathbb T^{K-1}$-orbit, as needed. Our map is  a continuous bijection between compact spaces, and hence an homeomorphism.   
\end{proof}

We have the following $C^*$-algebraic translation of the previous result, using the notation introduced at the end of the previous section:

\begin{proposition}\label{prop:inv}
 We have a $C^*$-algebra isomorphism 
\begin{align*}
\Psi :  C(\mathcal P_u((\mathbb C^N)^{\otimes K})) & \longrightarrow C((S^{N-1}_{\mathbb C})^K)^{\mathbb T^{K-1}} \\
r_{i_1, i_2, \ldots ,i_K} & \longmapsto a_{i_1,0} \cdots a_{i_K,K-1}
\end{align*}
\end{proposition}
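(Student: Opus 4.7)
The plan is to assemble $\Psi$ as the composition of two standard isomorphisms coming from Lemma \ref{lemm:projpure}, and then verify that the composite has the explicit description on generators stated in the proposition.

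First, I would note that the elements $a_{i_1,0}a_{i_2,1}\cdots a_{i_K,K-1}$ actually lie in the fixed point algebra $C((S^{N-1}_{\mathbb C})^K)^{\mathbb T^{K-1}}$. Indeed, writing the action of $(\lambda_1,\ldots,\lambda_K)\in \mathbb T^{K-1}$ on $(S^{N-1}_{\mathbb C})^K$ by componentwise multiplication, a quick computation shows that each $a_{i,c}$ transforms by a scalar $\lambda_{c+1}^{\pm 1}$, so the product $a_{i_1,0}\cdots a_{i_K,K-1}$ transforms by $(\lambda_1\lambda_2\cdots\lambda_K)^{\pm 1}=1$. Thus the proposed target algebra indeed contains the images of the generators $r_{i_1,\ldots,i_K}$.

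Next, the homeomorphism from Lemma \ref{lemm:projpure} gives a $C^*$-algebra isomorphism
$$C(\mathcal P_u((\mathbb C^N)^{\otimes K})) \;\cong\; C\bigl((S^{N-1}_{\mathbb C})^K/\mathbb T^{K-1}\bigr),$$
and the standard fact that for a compact group $G$ acting continuously on a compact Hausdorff space $X$ one has $C(X/G)\cong C(X)^G$ (via pullback along the quotient map) yields
$$C\bigl((S^{N-1}_{\mathbb C})^K/\mathbb T^{K-1}\bigr)\;\cong\; C((S^{N-1}_{\mathbb C})^K)^{\mathbb T^{K-1}}.$$
Composing these two isomorphisms produces a $C^*$-algebra isomorphism $\Psi$ between the two algebras in the statement.

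It remains to check that $\Psi$ sends $r_{i_1,\ldots,i_K}$ to $a_{i_1,0}\cdots a_{i_K,K-1}$. Unravelling, the generator $r_{i_1,\ldots,i_K}\in C(\mathcal P_u((\mathbb C^N)^{\otimes K}))$ is the coordinate function extracting the coefficient of $e_{i_1}\otimes\cdots\otimes e_{i_K}$. Pulling this back along $(z_0,\ldots,z_{K-1})\mapsto z_0\otimes\cdots\otimes z_{K-1}$ gives $(z_0)_{i_1}(z_1)_{i_2}\cdots(z_{K-1})_{i_K}=a_{i_1,0}a_{i_2,1}\cdots a_{i_K,K-1}$, which is precisely the formula for $\Psi(r_{i_1,\ldots,i_K})$. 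The most delicate point in the argument is ensuring that one uses the presentation of $C(\mathcal P_u((\mathbb C^N)^{\otimes K}))$ given in Proposition \ref{prop:prespure} (i.e.\ checking that the image elements satisfy the required relations so that $\Psi$ is well defined as an algebra map to begin with); but the commutativity of the target and the computation $a_{i_1,0}a_{i_2,1}\cdots a_{i_K,K-1}\cdot a_{j_1,0}a_{j_2,1}\cdots a_{j_K,K-1}=a_{j_1,0}a_{i_2,1}\cdots a_{i_K,K-1}\cdot a_{i_1,0}a_{j_2,1}\cdots a_{j_K,K-1}$ (and its analogues in the other slots) shows immediately that the Segre-type relations of Lemma \ref{lemm:pureKcoor} hold, while $\sum_{i_1,\ldots,i_K}a_{i_1,0}a_{i_2,1}\cdots a_{i_K,K-1}a_{i_1,0}^*a_{i_2,1}^*\cdots a_{i_K,K-1}^*=1$ follows slot by slot from $\sum_i a_{i,c}a_{i,c}^*=1$. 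Hence $\Psi$ is well-defined, and by construction it coincides with the isomorphism produced above, finishing the proof.
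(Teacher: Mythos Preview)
Your proof is correct and follows essentially the same approach as the paper: both arguments recognize $\Psi$ as the pullback along the homeomorphism of Lemma~\ref{lemm:projpure}, combined with the identification $C(X/G)\cong C(X)^G$. The paper states this in a single sentence, while you have helpfully expanded the details (the invariance check, the explicit pullback computation on generators, and the direct verification of the presentation relations); the last of these is redundant once the isomorphism is obtained from the homeomorphism, as you yourself note.
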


\begin{proof}
Since we have $a_{ic}(z_0,\ldots,z_{K-1})=(z_c)_i$, this is precisely the $C^*$-algebra morphism induced by the homeomorphism found in the previous lemma.
\end{proof}

\section{Matrix models for higher liberated complex spheres}

We now will show that the map in Proposition \ref{prop:mapcrossed} is injective, providing a faithful matrix model for $S^{N-1}_{\mathbb C, K}$. 

Our first result is the connection of the considerations of the previous section with $S^{N-1}_{\mathbb C, K}$, as follows:

\begin{proposition}\label{prop:connec}
There exists a $C^*$-algebra map
$$\Phi: C(\mathcal P_u((\mathbb C^N)^{\otimes K}))\to C(S^{N-1}_{\mathbb C,K}), \quad r_{i_1,\ldots , i_K}\mapsto z_{i_1}\cdots z_{i_K}$$
whose image is the $C^*$-subalgebra of $C(S^{N-1}_{\mathbb C,K})$ generated by the elements of: 
$$\Delta_K=\left\{z_{i_1}^{e_1}\cdots z_{i_s}^{e_s}\Big|\ s \geq 0, \ e_i \in\{1,*\}, \ \#\{e_i=1\}=\#\{e_i=*\}[K]\right\}$$  
\end{proposition}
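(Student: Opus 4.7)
The plan is to verify $\Phi$ is well-defined by checking the presentation from Proposition \ref{prop:prespure}, to observe the easy inclusion of the image in $C^*\langle \Delta_K\rangle$, and then to establish the reverse inclusion by a combinatorial induction on monomial length. Writing $R_{\vec i} := z_{i_1}\cdots z_{i_K}$, I need to check three families of relations in $C(S^{N-1}_{\mathbb C, K})$. The commutativity of the $R_{\vec i}$ among themselves and with their adjoints is built into the two defining relations of $C(S^{N-1}_{\mathbb C, K})$. The normalization $\sum_{\vec i} R_{\vec i} R_{\vec i}^* = 1$ follows by iterating $\sum_j z_j z_j^* = 1$. The ``pure tensor'' relations $R_{\vec i} R_{\vec j} = R_{\vec l} R_{\vec k}$ (whenever $\{i_r, j_r\} = \{l_r, k_r\}$ for every $r$) reduce, by composition, to swapping a single pair $(i_r, j_r) \leftrightarrow (j_r, i_r)$, which I obtain by applying Proposition \ref{prop:Khalf}(1) to the all-$z$ $(K+1)$-window $z_{i_r}(z_{i_{r+1}}\cdots z_{i_K} z_{j_1}\cdots z_{j_{r-1}}) z_{j_r}$ sitting inside $R_{\vec i} R_{\vec j}$.

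For the image, the inclusion $\mathrm{Image}(\Phi) \subseteq C^*\langle \Delta_K\rangle$ is immediate since $R_{\vec i}, R_{\vec i}^* \in \Delta_K$. For the reverse inclusion I will show that every monomial $w = z_{i_1}^{e_1}\cdots z_{i_s}^{e_s} \in \Delta_K$ lies in the image. As building blocks, iterating $\sum_\alpha z_\alpha z_\alpha^* = 1$ yields
$$z_{i_1}\cdots z_{i_k} z_{j_k}^* \cdots z_{j_1}^* \;=\; \sum_{\vec\alpha} R_{i_1,\ldots,i_k,\alpha_1,\ldots,\alpha_{K-k}}\, R_{j_1,\ldots,j_k,\alpha_1,\ldots,\alpha_{K-k}}^*$$
for $1 \le k \le K$ (and the analogous $z^*$-first identity), placing in particular the pairs $z_i z_j^*$ and $z_i^* z_j$ in $\mathrm{Image}(\Phi)$. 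For general $w$ I then proceed by strong induction on $s$, splitting on the length $l$ of the initial monotype run (so $e_1 = \cdots = e_l$ and $e_{l+1} \ne e_l$ if $l < s$): if $l \ge K$ I peel off an $R$ or $R^*$; if $l = 1$ I peel off the pair $z_{i_1}^{e_1} z_{i_2}^{e_2}$; and if $2 \le l \le K-1$ I insert the identity $1 = \sum_{\vec\alpha} z_{\alpha_1}\cdots z_{\alpha_{K-l}} z_{\alpha_{K-l}}^*\cdots z_{\alpha_1}^*$ between positions $l$ and $l+1$ to form an initial $R$-block, peel it off, and iterate on the remainder, which is again in $\Delta_K$.

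I expect the main obstacle to lie in this last case: the remainder after one insertion-and-peel has length $s + K - 2l$, which exceeds $s$ when $l \le K/2$, so a naive length-induction does not close. The way around this is that after the insertion the remainder begins with at least $K - l + 1$ consecutive $z^*$-letters, so the next iteration is much more constrained---typically allowing a direct $R^*$-peel, or returning us to the pair case. Termination must therefore be secured by a well-founded refinement of length, such as the lexicographic pair (length, initial-run length) or a potential function measuring distance to a $K$-block-decomposable canonical form. Explicit small cases, for instance $w = z_1 z_1 z_1^* z_1 z_1^* z_1^*$ with $K = 3$ expanding as a finite sum of fourfold products $R R^* R R^*$, confirm that the iterated procedure does terminate in finitely many steps and produces the desired expression of $w$ as a sum of products of $R_{\vec i}$ and $R_{\vec i}^*$.
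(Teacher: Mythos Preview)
Your verification that $\Phi$ is well-defined and the easy inclusion $\mathrm{Image}(\Phi)\subseteq C^*\langle\Delta_K\rangle$ are correct and match the paper. The genuine gap is in the termination of your inductive procedure for the reverse inclusion.

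You correctly diagnose the problem: in the range $2\le l\le K-1$ your insert-and-peel step can increase length, and you propose to rescue the induction via a lexicographic refinement such as $(\text{length},\ \text{initial-run length})$. But that particular ordering does not work --- when $l<K/2$ the length strictly increases, so the pair increases lexicographically regardless of the second coordinate --- and checking small cases is not a proof. Your scheme \emph{can} be completed: two consecutive insert-and-peel steps always cut the length by at least $2$, because the new initial run has length $l'\ge K-l+1$, whence $l+l'\ge K+1$ and the net length change over two steps is $2K-2(l+l')\le -2$. Equivalently, one may induct on the number of maximal monotype runs, which each insert-and-peel decreases by one (the initial $z$-run disappears and the inserted $(z^*)^{K-l}$ block merges with the following $z^*$-run). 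Either observation closes the argument, but as written your proof leaves this essential point unproved.

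The paper avoids the difficulty with a different and cleaner decomposition. When the word begins $z_{i_1}\cdots z_{i_t}\,z_{i_{t+1}}^*\,x$ with $1\le t<K$, instead of padding the initial $z$-run up to length $K$, the paper pads the lone $z^*$ up to length $t$: inserting $\sum_{\alpha} z_{\alpha_{t-1}}^*\cdots z_{\alpha_1}^*\,z_{\alpha_1}\cdots z_{\alpha_{t-1}}$ immediately after $z_{i_{t+1}}^*$ produces a prefix of the balanced form $z_{i_1}\cdots z_{i_t}\,z_{j_1}^*\cdots z_{j_t}^*$ --- precisely the building block you already placed in $\mathrm{Image}(\Phi)$ --- times a remainder $z_{\alpha_1}\cdots z_{\alpha_{t-1}}x\in\Delta_K$ of length exactly $s-2$. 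The induction then closes on length alone, with no auxiliary well-ordering needed.
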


\begin{proof}
The existence of a morphism $\Phi$ as in the statement follows from Proposition \ref{prop:prespure}, from the defining relations of $C(S^{N-1}_{\mathbb C,K})$, and from Proposition \ref{prop:Khalf}. 

We have to prove that any element of $\Delta_K$ belongs to the image of $\Phi$. So, let $z=z_{i_1}^{e_1}\cdots z_{i_s}^{e_s}\in\Delta_K$. We proceed by induction on $s$. We have 6 cases, as follows:

Case 1: $z=z_{i_1} \cdots z_{i_K}x$, with $x$ monomial in $\{z_i,z_i^*\}$. Then $x\in\Delta_K$ and by the induction we have $x\in Im(\Phi)$, and since $z_{i_1}\cdots z_{i_K}\in Im(\Phi)$, we get $z\in Im(\Phi)$. 

Case 2: $z=z_{i_1}^*\cdots z_{i_K}^*x$, with $x$ monomial in $\{z_i,z_i^*\}$. This is similar to Case 1.

Case 3: $z=z_{i_1}\cdots z_{i_t}z_{j_1}^*\cdots z_{j_t}^*$. If $t>K$ we have $z \in Im(\Phi)$ by Case 1. Otherwise:
$$z=\sum_{i_{t+1}\ldots i_K}z_{i_1}\cdots z_{i_t}z_{i_{t+1}}\cdots z_{i_K}z_{i_K}^*\cdots z_{i_{t+1}}^*z_{j_1}^*\cdots z_{j_t}^*\in Im(\Phi)$$

Case 4: $z=z_{i_1}^*\cdots z_{i_t}^*z_{j_1}\cdots z_{j_t}$. This is similar to Case 3.

Case 5: $z=z_{i_1}\cdots z_{i_t}z_{i_{t+1}}^*x$ with $1\leq t<K$ and $x$ monomial in $\{z_i,z_i^*\}$. We have:
$$z=z_{i_1}\cdots z_{i_t}z_{i_{t+1}}^*x=\sum_{\alpha_3,\ldots,\alpha_{t+1}}
z_{i_1}\cdots z_{i_t}z_{i_{t+1}}^*z_{\alpha_{t+1}}^*\cdots z_{\alpha_3}^* z_{\alpha_3}\cdots z_{\alpha_{t+1}}x$$
Hence the elements $y=z_{\alpha_3}\cdots z_{\alpha_{t+1}}x$ belong to $\Delta_K$, and by induction,  belong to $Im(\Phi)$. Thus by using Case 3 we conclude that $z\in Im(\Phi)$. 

Case 6: $z=z_{i_1}^*\cdots z_{i_t}^*z_{i_{t+1}}x$ with $1 \leq t<K$ and $x$ monomial in $\{z_i,z_i^*\}$. We can proceed here as in Case 5, and by using Case 4, we obtain $z\in Im(\Phi)$.
\end{proof}

The above result shows in particular that for $x,y\in\Delta_K$, we have $[x,y]=0$, since these elements belong to the image of a commutative algebra, and this provides an alternative description of $C(S_{\mathbb C, K}^{N-1})$.

\begin{corollary}\label{cor:Delta_K}
We have $$C(S_{\mathbb C, K}^{N-1})= C(S_{\mathbb C, +}^{N-1})/ \Big/\Big<[x,y]=0, \ x,y\in\Delta_K\Big>$$
\end{corollary}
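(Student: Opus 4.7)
The plan is to show that the two presentations define the same quotient of $C(S^{N-1}_{\mathbb C,+})$ by proving that the defining ideals coincide. Write $I_K \subset C(S^{N-1}_{\mathbb C,+})$ for the closed two-sided ideal generated by the commutators $[z_{i_1}\cdots z_{i_K}, z_{j_1}\cdots z_{j_K}]$ and $[z_{i_1}\cdots z_{i_K}, z_{j_1}^*\cdots z_{j_K}^*]$, and let $J$ be the closed two-sided ideal generated by $[x,y]$ for $x,y\in\Delta_K$. The goal is $I_K = J$.

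For the inclusion $I_K \subseteq J$, I would simply observe that any monomial of the form $z_{i_1}\cdots z_{i_K}$ has $K$ unstarred letters and $0$ starred ones, so the difference is $K \equiv 0 \pmod K$, placing it in $\Delta_K$; analogously $z_{j_1}^*\cdots z_{j_K}^*$ lies in $\Delta_K$. Hence every generator of $I_K$ is of the form $[x,y]$ with $x,y\in\Delta_K$, so it already belongs to $J$.

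The other inclusion $J \subseteq I_K$ is exactly the content of the remark preceding the corollary, and is the one real step. By Proposition \ref{prop:connec}, the $*$-homomorphism $\Phi: C(\mathcal P_u((\mathbb C^N)^{\otimes K}))\to C(S^{N-1}_{\mathbb C,K})$ has image equal to the $C^*$-subalgebra generated by $\Delta_K$. Since the domain $C(\mathcal P_u((\mathbb C^N)^{\otimes K}))$ is a commutative $C^*$-algebra (being a $C^*$-algebra of continuous functions on a compact space), its image under $\Phi$ is likewise commutative. In particular, for any $x,y\in\Delta_K$ we have $[x,y]=0$ in $C(S^{N-1}_{\mathbb C,K})$, which means precisely that $J$ maps to zero in the quotient $C(S^{N-1}_{\mathbb C,+})/I_K$, i.e.\ $J \subseteq I_K$.

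Combining the two inclusions gives $I_K = J$, and the corollary follows. I do not expect any technical obstacle: both inclusions are immediate once Proposition \ref{prop:connec} is in hand; the only substantive point is the appeal to the commutativity of the Segre-type algebra $C(\mathcal P_u((\mathbb C^N)^{\otimes K}))$, which supplies the extra commutation relations in $\Delta_K$ beyond the ones built into the definition of $S^{N-1}_{\mathbb C,K}$.
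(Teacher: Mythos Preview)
Your argument is correct and is exactly the approach the paper takes: the paper observes, in the sentence preceding the corollary, that Proposition~\ref{prop:connec} forces all elements of $\Delta_K$ to commute in $C(S^{N-1}_{\mathbb C,K})$ because they lie in the image of the commutative algebra $C(\mathcal P_u((\mathbb C^N)^{\otimes K}))$, while the reverse inclusion is immediate since the defining commutators of $C(S^{N-1}_{\mathbb C,K})$ involve monomials already in $\Delta_K$. You have simply written out these two inclusions explicitly.
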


 We will need:

\begin{proposition}\label{prop:ZK-grading}
The algebra $C(S^{N-1}_{\mathbb C,K})$ has a natural $\mathbb Z_K$-grading whose $0$-component is the $C^*$-subalgebra generated by the elements of $\Delta_K$.
\end{proposition}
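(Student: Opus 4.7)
The plan is to produce the $\mathbb Z_K$-grading from an honest action of $\mathbb Z_K$ on $C(S^{N-1}_{\mathbb C,K})$ by $*$-automorphisms, and then identify its fixed-point subalgebra with the $C^*$-subalgebra generated by $\Delta_K$ via a standard averaging argument.

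First I would fix a primitive $K$-th root of unity $\zeta = e^{2\pi i/K}$ and define, at the level of generators, the map $\alpha(z_i) = \zeta z_i$. To see that this extends to a $*$-automorphism of $C(S^{N-1}_{\mathbb C,K})$, I need to verify that $\alpha$ preserves all defining relations. The sphere relations $\sum_i z_iz_i^* = \sum_i z_i^*z_i = 1$ are invariant because each summand picks up a factor $\zeta\bar\zeta = 1$. The commutation relations
$$[z_{i_1}\cdots z_{i_K},z_{j_1}\cdots z_{j_K}]=0=[z_{i_1}\cdots z_{i_K},z_{j_1}^*\cdots z_{j_K}^*]$$
become, after applying $\alpha$, the same relations rescaled by $\zeta^{2K}$ and $\zeta^K\bar\zeta^K = 1$ respectively; both factors equal $1$. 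Since $\alpha$ has inverse $z_i \mapsto \bar\zeta z_i$ (again by the same check), $\alpha$ is an automorphism, and the assignment $k \mapsto \alpha^k$ defines a continuous action $\alpha:\mathbb Z_K \to \mathrm{Aut}(C(S^{N-1}_{\mathbb C,K}))$.

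Next I would extract the grading from $\alpha$ via spectral subspaces. For each $m \in \mathbb Z_K$ set
$$A_m = \{a \in C(S^{N-1}_{\mathbb C,K}) : \alpha(a) = \zeta^m a\},$$
and define the averaging projections
$$E_m(a) = \frac{1}{K}\sum_{k=0}^{K-1} \zeta^{-km}\alpha^k(a).$$
Each $E_m$ is a bounded linear idempotent with image $A_m$, and $\sum_m E_m = \mathrm{id}$, so $C(S^{N-1}_{\mathbb C,K}) = \bigoplus_{m \in \mathbb Z_K} A_m$ as a topological direct sum, yielding the $\mathbb Z_K$-grading. Multiplicativity $A_m A_n \subseteq A_{m+n}$ and $A_m^* = A_{-m}$ follow from $\alpha$ being a $*$-automorphism.

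Finally I would identify $A_0$ with the $C^*$-subalgebra $B$ generated by $\Delta_K$. The inclusion $B \subseteq A_0$ is immediate: a monomial $z_{i_1}^{e_1}\cdots z_{i_s}^{e_s}$ with $\#\{e_j=1\} \equiv \#\{e_j=*\} \pmod K$ is sent by $\alpha$ to itself. For the reverse inclusion, note that on any monomial $w = z_{i_1}^{e_1}\cdots z_{i_s}^{e_s}$ we have $\alpha(w) = \zeta^{d(w)}w$, where $d(w) = \#\{e_j=1\} - \#\{e_j=*\}$, so $E_0(w)$ equals $w$ if $w \in \Delta_K$ and vanishes otherwise. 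Hence $E_0$ maps the (dense) $*$-algebra $\mathcal O(S^{N-1}_{\mathbb C,K})$ into $B$, and since $E_0$ is continuous, $A_0 = E_0(C(S^{N-1}_{\mathbb C,K})) \subseteq B$.

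The only subtle point is the last density step: one must be sure that $E_0$ is continuous (which holds because it is an average of $*$-automorphisms) and that its image is closed in $C(S^{N-1}_{\mathbb C,K})$ (which holds because $E_0$ is a bounded projection). This is the only place where some care is needed; everything else is a routine verification of invariance of the defining relations.
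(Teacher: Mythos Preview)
Your proof is correct and follows essentially the same approach as the paper: both obtain the $\mathbb Z_K$-grading from the action of the group $\mu_K$ of $K$-th roots of unity on $C(S^{N-1}_{\mathbb C,K})$ via $\omega\cdot z_i=\omega z_i$, and then identify the zero component with the closure of the span of $\Delta_K$ by a density argument. Your version is simply more explicit, spelling out the averaging projections $E_m$ and the continuity step that the paper compresses into the single sentence ``$\mathcal O(S^{N-1}_{\mathbb C,K})_0$ is the $*$-subalgebra generated by the elements of $\Delta_K$ and is dense in $C(S^{N-1}_{\mathbb C,K})_0$.''
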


\begin{proof}
The group $\mu_K$ of $K$-th roots of unity acts on $C(S^{N-1}_{\mathbb C, K})$ by $\omega\cdot z_i=\omega z_i$. Let us set:
$$C(S^{N-1}_{\mathbb C,K})_j=\left\{a\in C(S^{N-1}_{\mathbb C,K})\Big|\ \omega\cdot a=\omega^ja, \forall\omega\right\}$$
We obtain in this way an algebra $\mathbb Z_K$-grading, as follows:
$$C(S^{N-1}_{\mathbb C,K})=\bigoplus_{j=0}^{K-1}C(S^{N-1}_{\mathbb C,K})_j$$
Since $\mathcal O(S^{N-1}_{\mathbb C,K})_0$ is the $*$-subalgebra generated by the elements of $\Delta_K$ and is dense in $C(S^{N-1}_{\mathbb C,K})_0$, we are done.
\end{proof}

We now have all the ingredients to prove the following result:

\begin{theorem}\label{thm:faithmodel}
 There exists a faithful matrix model
$$C(S^{N-1}_{\mathbb C,K}) \rightarrow M_K(C((S^{N-1}_{\mathbb C})^K))$$
\end{theorem}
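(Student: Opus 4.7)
The plan is to show that the map $\pi$ constructed in Proposition \ref{prop:mapcrossed} is already injective, and then to convert the crossed product $C((S^{N-1}_{\mathbb C})^K) \rtimes \mathbb Z_K$ into a matrix algebra in the standard way. Writing $A=C((S^{N-1}_{\mathbb C})^K)$, the crossed product by the finite group $\mathbb Z_K$ coincides with the reduced crossed product and embeds faithfully into $M_K(A)$ via the regular covariant representation (send $a\in A$ to the diagonal matrix $\mathrm{diag}(a,\tau\cdot a,\ldots,\tau^{K-1}\cdot a)$ and $\tau$ to the cyclic shift permutation matrix). Composing $\pi$ with this embedding then produces the desired matrix model into $M_K(A)$, and faithfulness follows from that of $\pi$ itself.

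The injectivity of $\pi$ is what has to be proved, and the tool is the $\mathbb Z_K$-grading from Proposition \ref{prop:ZK-grading}. First I would observe that $\pi$ is equivariant for the $\mu_K$-actions: on the source, $\omega\cdot z_i=\omega z_i$, and on the crossed product one has the dual gauge action $\omega\cdot(a\otimes\tau)=\omega(a\otimes\tau)$. Both sides therefore carry faithful conditional expectations onto their degree-$0$ subalgebras, namely the averaging maps
$$E'(x)=\frac1K\sum_{\omega\in\mu_K}\omega\cdot x,\qquad E(y)=\frac1K\sum_{\omega\in\mu_K}\omega\cdot y,$$
and $\pi$ intertwines them: $\pi\circ E' = E\circ\pi$. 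Faithfulness of each averaging map on positive elements is immediate, since $E'(x^*x)=\tfrac1K\sum(\omega\cdot x)^*(\omega\cdot x)$ is a sum of positives.

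The key step is then injectivity on the degree-$0$ piece. By Proposition \ref{prop:connec}, the subalgebra $C(S^{N-1}_{\mathbb C,K})_0$ is exactly the image of the map $\Phi$ from $C(\mathcal P_u((\mathbb C^N)^{\otimes K}))$. A direct computation (already performed in the proof of Proposition \ref{prop:mapcrossed}) shows that
$$\pi(z_{i_1}\cdots z_{i_K})=a_{i_1,0}\,a_{i_2,1}\cdots a_{i_K,K-1}\otimes 1,$$
which under the identification of the degree-$0$ part of the crossed product with $A$ is exactly the formula defining $\Psi$ in Proposition \ref{prop:inv}. Thus $\pi\circ\Phi=\iota\circ\Psi$, where $\iota:A^{\mathbb T^{K-1}}\hookrightarrow A$ is the inclusion. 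Since $\Psi$ is an isomorphism by Proposition \ref{prop:inv}, the composition $\pi\circ\Phi$ is injective, hence $\Phi$ is injective and its image $C(S^{N-1}_{\mathbb C,K})_0$ is mapped injectively by $\pi$.

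To conclude, if $x\in\ker\pi$ then $\pi(x^*x)=0$, so $\pi(E'(x^*x))=E(\pi(x^*x))=0$; but $E'(x^*x)$ lies in $C(S^{N-1}_{\mathbb C,K})_0$ where $\pi$ is injective, so $E'(x^*x)=0$, and faithfulness of $E'$ on positives gives $x=0$. The main obstacle is really the degree-$0$ step: it is exactly there that all the pure-tensor machinery of Section 4 is needed, via the identification of $C(S^{N-1}_{\mathbb C,K})_0$ with the Segre-type invariant algebra $A^{\mathbb T^{K-1}}$. Everything else (the crossed-product-to-matrix-algebra embedding and the grading/expectation argument) is formal.
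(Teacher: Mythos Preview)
Your proof is correct and follows essentially the same route as the paper: reduce injectivity of $\pi$ to the degree-$0$ component via the $\mathbb Z_K$-grading, identify $\pi\circ\Phi$ with $\Psi\otimes 1$ using the computations from Propositions \ref{prop:mapcrossed} and \ref{prop:inv}, and then compose with the standard embedding of the crossed product into $M_K(C((S^{N-1}_{\mathbb C})^K))$. The only difference is that the paper leaves the reduction-to-degree-$0$ step as a ``standard argument'', whereas you spell it out via the faithful averaging expectation $E'$; this is a welcome clarification but not a different idea.
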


\begin{proof}
 We first show that the map $\pi$ from Proposition \ref{prop:mapcrossed} is injective.
By  Proposition \ref{prop:ZK-grading}   $C(S^{N-1}_{\mathbb C,K})$ is $\mathbb Z_K$-graded. The $C^*$-algebra $C((S^{N-1}_{\mathbb C})^K)\rtimes\mathbb Z_K$ is $\mathbb Z_K$-graded as well, with:   
$$(C((S^{N-1}_{\mathbb C})^K)\rtimes\mathbb Z_K)_j=C((S^{N-1}_{\mathbb C})^K)\otimes\tau^j$$
Since $\pi$ preserves the grading, by a standard argument it is enough to show that the restriction of $\pi$ to the zero component is injective. We use the maps $\Psi,\Phi$ from Proposition \ref{prop:inv} and Proposition \ref{prop:connec}. By Propositions \ref{prop:connec} and \ref{prop:ZK-grading}, the image of $\Phi$ is the $0$-component of $C(S^{N-1}_{\mathbb C, K})$. We have: 
$$\pi\Phi(r_{i_1\ldots i_K})=a_{i_1,0}\cdots a_{i_K,K-1}\otimes 1=\Psi(r_{i_1, \ldots,i_K})\otimes 1$$

Thus $\pi\Phi=\Psi\otimes 1$, and since $\Psi$ is injective, we conclude that $\pi$ is injective on the algebra $C(S^{N-1}_{\mathbb C,K})_0=Im(\Phi)$, hence on $C(S^{N-1}_{\mathbb C,K})$. 

The theorem now follows by using the standard embedding $$C((S^{N-1}_{\mathbb C})^K) \rtimes \mathbb Z_K \subset M_K(C((S^{N-1}_{\mathbb C})^K))$$
 obtained using the permutation matrix of a $K$-cycle.
\end{proof}

\begin{corollary}\label{cor:irrSNK}
 The representations of $C(S^{N-1}_{\mathbb C,K})$ have the following properties:
\begin{enumerate}
\item There exist irreducible representations of dimension $K$.

\item Any irreducible representation is finite dimensional, of dimension $\leq K$.
\end{enumerate}
In particular, for $K\neq K'$, the $C^*$-algebras $C(S^{N-1}_{\mathbb C,K})$ and $C(S^{N-1}_{\mathbb C,K'})$ are not isomorphic.
\end{corollary}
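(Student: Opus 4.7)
My plan is in three parts. \textbf{Part (2)} is immediate from the preceding results. Theorem \ref{thm:faithmodel} provides a faithful $K\times K$-matrix model for $S^{N-1}_{\mathbb C, K}$, so Proposition \ref{prop:faithimplX=XK} yields $S^{N-1}_{\mathbb C, K} = (S^{N-1}_{\mathbb C, K})^{(K)}$, and Proposition \ref{prop:X=XK} then gives that every irreducible representation of $C(S^{N-1}_{\mathbb C, K})$ is finite-dimensional of dimension at most $K$. Granting (1), the non-isomorphism clause is automatic: the supremum of dimensions of irreducible representations is an isomorphism invariant, and for $K < K'$ this supremum equals $K < K'$ on one side and (at least) $K'$ on the other.

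For \textbf{Part (1)}, I would construct a $K$-dimensional irreducible representation by evaluating the faithful matrix model at a suitable orbit. Compose the map $\pi$ from Proposition \ref{prop:mapcrossed} with the representation of $C((S^{N-1}_{\mathbb C})^K) \rtimes \mathbb Z_K$ induced by a free $\mathbb Z_K$-orbit. For $N \geq 2$, pick $p=(z_0,\ldots,z_{K-1})\in (S^{N-1}_{\mathbb C})^K$ with the $z_j$'s pairwise projectively distinct unit vectors in $\mathbb C^N$. Then $\tau^0p,\ldots,\tau^{K-1}p$ are $K$ distinct points, and the standard evaluation at this free orbit yields an irreducible representation $\widetilde\rho_p:C((S^{N-1}_{\mathbb C})^K)\rtimes\mathbb Z_K\to M_K(\mathbb C)$ on $\mathbb C^K=\bigoplus_j\mathbb Ce_j$ (using the classical isomorphism $C(\mathbb Z_K)\rtimes\mathbb Z_K\cong M_K(\mathbb C)$). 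Setting $\rho_p=\widetilde\rho_p\circ\pi$, one computes $\rho_p(z_i)\,e_j=(\zeta_{j+1})_i\,e_{j+1}$, where $\zeta_s$ denotes the first component of $\tau^sp$; so each $\rho_p(z_i)$ is a weighted cyclic shift.

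The \textbf{main obstacle} is to verify that $\rho_p$ is irreducible, i.e.\ that its image generates all of $M_K(\mathbb C)$. Using the $\mathbb Z_K$-grading of Proposition \ref{prop:ZK-grading} together with Propositions \ref{prop:connec} and \ref{prop:inv}, the image of the zero-component $C(S^{N-1}_{\mathbb C, K})_0$ sits inside the diagonal subalgebra of $M_K(\mathbb C)$ and arises by evaluating $\mathbb T^{K-1}$-invariant continuous functions on $(S^{N-1}_{\mathbb C})^K$ at the orbit points $\tau^jp$. The key lemma I would prove is that these $K$ points lie in $K$ distinct $\mathbb T^{K-1}$-orbits: if $\tau^cp=\mu\cdot p$ for some $\mu\in\mathbb T^{K-1}$ and $c\not\equiv 0\pmod K$, then comparing components yields $z_{i-c}=\mu_iz_i$ for every $i$, forcing $z_i$ and $z_{i-c}$ to be colinear and contradicting the pairwise projective distinctness. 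Granted this, Stone--Weierstrass on $(S^{N-1}_{\mathbb C})^K/\mathbb T^{K-1}$ ensures the image of the zero-component is the entire diagonal subalgebra of $M_K(\mathbb C)$; combining with the weighted-shift generators $\rho_p(z_i)$ (which together cover all super-diagonal positions, since at each $j$ some entry $(\zeta_{j+1})_i$ is nonzero) we generate all of $M_K(\mathbb C)$, proving irreducibility.
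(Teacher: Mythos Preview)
Your argument is correct. Part (2) and the final non-isomorphism clause match the paper exactly. For Part (1), however, you take a genuinely different route from the paper.

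The paper's proof of (1) is a direct commutant computation: it chooses a specific point $x\in(S^{N-1}_{\mathbb C})^K$ with $(x_c)_1=\frac{1}{\sqrt N}$ and $(x_c)_2=\frac{\xi_c}{\sqrt N}$ for pairwise distinct $\xi_c\in\mathbb T$, and observes that $\rho_x(z_1)$ is a scalar multiple of the cyclic shift $S$ while $\rho_x(z_2)\rho_x(z_1)^*$ is a scalar multiple of the diagonal matrix $D={\rm diag}(\xi_0,\ldots,\xi_{K-1})$; any matrix commuting with both $S$ and $D$ (with distinct entries) must be scalar, so the representation is irreducible. This is short and needs only two generators. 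Your approach instead exploits the structural machinery already built: you pick projectively distinct components, invoke the $\mathbb Z_K$-grading and the identification $C(S^{N-1}_{\mathbb C,K})_0\cong C((S^{N-1}_{\mathbb C})^K)^{\mathbb T^{K-1}}$ from Propositions \ref{prop:inv}, \ref{prop:connec}, \ref{prop:ZK-grading}, prove the orbit points lie in distinct $\mathbb T^{K-1}$-orbits, and conclude that the zero-component already surjects onto the full diagonal subalgebra. This is longer but more conceptual: it explains \emph{why} irreducibility holds in terms of the geometry of pure tensors, and it characterises a whole open set of points $p$ giving irreducible $\rho_p$, not just one. Both arguments tacitly require $N\geq 2$.
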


\begin{proof}
We use the standard embedding mentioned above, namely:
$$C ((S^{N-1}_{\mathbb C})^K)\rtimes\mathbb Z_K\subset M_K(C((S^{N-1}_{\mathbb C})^K), \quad a\otimes\tau^j\mapsto \sum_c\tau^c(a)E_{c,c+j}$$

(1) Any element $x =(x_0,\ldots,x_{K-1})\in(S^{N-1}_{\mathbb C})^K$ defines, by evaluation composed with $\pi$, a representation $\rho_x:C (S^{N-1}_{\mathbb C,K})\to M_K(\mathbb C)$, given by:
$$z_i\to\sum_c\tau^c(a_{i,0})(x)E_{k,k+1}=\sum_ca_{ic}(x)E_{c,c+1}=
\sum_c(x_c)_iE_{c,c+1}$$

Now choose $x$ such that $(x_c)_1=\frac{1}{\sqrt{N}}$, $(x_c)_2= \frac{\xi_c}{\sqrt{N}}$ for any $c$, with the elements $\xi_c \in \mathbb T$ being pairwise distinct. Then the commutant of the matrices $\rho_x(z_1),\rho_x(z_2)$ is reduced to the set of scalar matrices, and so our representation is irreducible.

(2) This follows from the theorem and Proposition \ref{prop:faithimplX=XK}, and the last assertion follows from (1) and (2).
\end{proof}

As a useful consequence of the proof of Theorem \ref{thm:faithmodel}, we also record, for future use:

\begin{corollary}\label{cor:Phibijective}
 The $C^*$-algebra map
$$\Phi: C(\mathcal P_u((\mathbb C^N)^{\otimes K}))\to C(S^{N-1}_{\mathbb C,K})_0, \quad r_{i_1,\ldots , i_K}\mapsto z_{i_1}\cdots z_{i_K}$$
is an isomorphism.
\end{corollary}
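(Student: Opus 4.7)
The plan is to assemble this from pieces already established, rather than re-doing any nontrivial computation. Most of the work has been done: we just need to carefully identify the right commutative diagram and read off injectivity.

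Surjectivity of $\Phi$ onto $C(S^{N-1}_{\mathbb C,K})_0$ is essentially already known. By Proposition \ref{prop:connec}, the image of $\Phi$ is the $C^*$-subalgebra generated by $\Delta_K$. By Proposition \ref{prop:ZK-grading}, this subalgebra coincides (via density of the grading decomposition) with $C(S^{N-1}_{\mathbb C,K})_0$. So I would begin the proof by simply citing these two results to conclude surjectivity.

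For injectivity, the key observation is the identity $\pi \Phi = \Psi \otimes 1$ which was established in the course of proving Theorem \ref{thm:faithmodel}; more precisely, on generators one has
\[
\pi\Phi(r_{i_1,\ldots,i_K}) = a_{i_1,0}\cdots a_{i_K,K-1}\otimes 1 = \Psi(r_{i_1,\ldots,i_K})\otimes 1.
\]
Here $\Psi$ is the isomorphism of Proposition \ref{prop:inv}, and $\pi$ is the map of Proposition \ref{prop:mapcrossed}, whose injectivity is exactly the content of Theorem \ref{thm:faithmodel}. Since $\Psi$ is injective (being an isomorphism), the map $\Psi\otimes 1$ into $C((S^{N-1}_{\mathbb C})^K)\rtimes\mathbb Z_K$ (viewed via the embedding $a\mapsto a\otimes 1$ into the zero component of the crossed product) is injective. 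The composition $\pi \Phi$ is therefore injective, which forces $\Phi$ to be injective.

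I expect no real obstacle: everything follows by bookkeeping from the already-proved Theorem \ref{thm:faithmodel} and Propositions \ref{prop:inv}, \ref{prop:connec}, \ref{prop:ZK-grading}. The only mildly delicate point is to be clear about the domain and codomain on each side of $\pi\Phi = \Psi\otimes 1$: $\Phi$ maps into the $0$-component $C(S^{N-1}_{\mathbb C,K})_0$, and $\pi$ restricted to this $0$-component lands in $C((S^{N-1}_{\mathbb C})^K)\otimes \tau^0\cong C((S^{N-1}_{\mathbb C})^K)$, which is exactly the codomain of $\Psi$. Once this is said, the conclusion is immediate.
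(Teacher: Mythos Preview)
Your proposal is correct and follows exactly the route implicit in the paper: surjectivity is read off from Propositions \ref{prop:connec} and \ref{prop:ZK-grading}, and injectivity comes from the identity $\pi\Phi=\Psi\otimes 1$ established in the proof of Theorem \ref{thm:faithmodel}, using that $\Psi$ is an isomorphism. One small remark: you do not actually need the injectivity of $\pi$ itself (which is the conclusion of Theorem \ref{thm:faithmodel}) to deduce injectivity of $\Phi$; the equality $\pi\Phi=\Psi\otimes 1$ with $\Psi$ injective already suffices, as you in fact observe.
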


\section{The $1/K$-classical version of a noncommutative manifold}

We now generalize the previous construction to more general objects $X \subset S^{N-1}_{\mathbb C, +}$.
For this, we first need to introduce some vocabulary.
Recall that the $\mathbb Z_K$-grading on $C(S^{N-1}_{\mathbb C,K})$ from Proposition \ref{prop:ZK-grading} comes from the $\mu_K$-action on 
$C(S^{N-1}_{\mathbb C,K})$ defined by $\omega\cdot z_i=\omega z_i$. This action is in fact defined on $C(S^{N-1}_{\mathbb C,+})$. 

\begin{definition}
 We say that $X\subset S^{N-1}_{\mathbb C,+}$ is $K$-symmetric if the above $\mu_K$-action on $C(S^{N-1}_{\mathbb C,+})$ induces a $\mu_K$-action on $C(X)$.
\end{definition}

For example $S^{N-1}_{\mathbb C,K}$ is itself $K$-symmetric.

\begin{definition}
For $X\subset S^{N-1}_{\mathbb C,+}$, assumed to be $K$-symmetric, the $1/K$-classical version of $X$ is defined by 
$$X_{1/K-class}=X\cap S^{N-1}_{\mathbb C,K}$$
or, in other words, by
$$C(X_{1/K-class}) = C(X)\Big/\Big< [z_{i_1}\cdots z_{i_K}, z_{j_{1}} \cdots z_{j_K}]=0
=  [z_{i_1} \cdots z_{i_K}, z_{j_1}^* \cdots z_{j_K}^*]\Big>$$
\end{definition}

Clearly the $1/K$-classical version of $S^{N-1}_{\mathbb C,+}$ is $S^{N-1}_{\mathbb C,K}$, and if $X$ is algebraic, so is $X_{1/K-class}$.

\begin{remark}\label{rem: patho}
The symmetry assumption is here to avoid some pathologies. Indeed, for $X=S^{N-1}_{\mathbb R,+}$, which is not $K$-symmetric for $K\geq 3$, our definition would give, for $K \geq 3$, that the $1/K$-classical version is $S^{N-1}_{\mathbb R}$, the classical version. This fits with the fact that in the real case, for $K \geq 3$, the $K$-half liberation procedure does not produce any new sphere in the real case (Remark \ref{rem:nonreal}), and only the $K=2$ case is allowed.
\end{remark}



 It is clear that if $X \subset S^{N-1}_{\mathbb C,+}$ is $K$-symmetric, then $X_{1/K-class}$ is also $K$-symmetric.

We will also say that a subset $T \subset (S^{N-1}_{\mathbb C})^K$ is symmetric if it is stable under the cyclic action of $\mathbb Z_K$.


Our aim now is to construct a faithful matrix model for $X  \subset S^{N-1}_{\mathbb C,K}$ $K$-symmetric. We will use the following tool.

\begin{definition}
 We denote by  $\gamma$ the linear endomorphism of $C(S^{N-1}_{\mathbb C,K})$ defined by $$\gamma(a)=\sum_{i=1}^nz_iaz_i^*$$
\end{definition}

The main properties of $\gamma$ are summarized in the following lemma, where we use the map $\Phi$ from Proposition \ref{prop:ZK-grading}.
 
\begin{lemma}\label{gamma}
 The endomorphism $\gamma$ preserves the $\mathbb Z_K$-grading of $C(S^{N-1}_{\mathbb C,K})$, and induces a $*$-algebra automorphism of $C(S^{N-1}_{\mathbb R,K})_0$. Moreover the following diagram commutes 
\begin{equation*}
\begin{CD}
  C(\mathcal P_u((\mathbb C^N)^{\otimes K})) @>\Phi>>   C(S^{N-1}_{\mathbb C,K})_0 \\
@VV\tau V @VV\gamma V \\
 C(\mathcal P_u((\mathbb C^N)^{\otimes K})) @>\Phi>>   C(S^{N-1}_{\mathbb C,K})_0 
\end{CD}
\end{equation*}
where $\tau$ is the cyclic automorphism induced given by $\tau(r_{i_1\ldots i_K})=r_{i_Ki_1 \ldots i_{K-1}}$. Hence there is a bijective correspondence between $\gamma$-stable ideals of $C(S^{N-1}_{\mathbb C,K})_0$ and symmetric closed subsets of $\mathcal P_u((\mathbb C^N)^{\otimes K})$.
\end{lemma}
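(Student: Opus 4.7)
The plan is to verify the three assertions in order: grading preservation, identification with $\tau$ via $\Phi$, and then the ideal/closed-subset correspondence.

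\smallskip

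\textbf{Step 1 (grading).} The $\mathbb Z_K$-grading on $C(S^{N-1}_{\mathbb C,K})$ comes from the $\mu_K$-action $\omega\cdot z_i=\omega z_i$, whence $\omega\cdot z_i^*=\omega^{-1}z_i^*$. For $a$ of degree $j$, compute
\[
\omega\cdot\gamma(a)=\sum_i(\omega z_i)(\omega^j a)(\omega^{-1}z_i^*)=\omega^j\gamma(a),
\]
so $\gamma$ preserves the degree. In particular it restricts to a linear endomorphism of the $0$-component.

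\smallskip

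\textbf{Step 2 (the key computation).} The main content is to show
\[
\gamma(z_{i_1}\cdots z_{i_K})=z_{i_K}z_{i_1}\cdots z_{i_{K-1}}\quad\text{in } C(S^{N-1}_{\mathbb C,K}).
\]
This is where Proposition \ref{prop:Khalf}(1) does the work: applying the relation with middle block $z_{i_1}\cdots z_{i_{K-1}}$ and outer factors $z_j,z_{i_K}$ gives
\[
z_j\,z_{i_1}\cdots z_{i_{K-1}}\,z_{i_K}=z_{i_K}\,z_{i_1}\cdots z_{i_{K-1}}\,z_j.
\]
Multiplying on the right by $z_j^*$ and summing over $j$ yields
\[
\gamma(z_{i_1}\cdots z_{i_K})=\sum_j z_{i_K}z_{i_1}\cdots z_{i_{K-1}}\,z_jz_j^*=z_{i_K}z_{i_1}\cdots z_{i_{K-1}},
\]
using $\sum_j z_jz_j^*=1$. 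This is the only nontrivial calculation in the proof; everything else is formal.

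\smallskip

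\textbf{Step 3 (commutative square and automorphism).} The computation in Step 2 says exactly
\[
\gamma\bigl(\Phi(r_{i_1\ldots i_K})\bigr)=z_{i_K}z_{i_1}\cdots z_{i_{K-1}}=\Phi(r_{i_Ki_1\ldots i_{K-1}})=\Phi\bigl(\tau(r_{i_1\ldots i_K})\bigr).
\]
Since the generators $r_{i_1\ldots i_K}$ generate $C(\mathcal P_u((\mathbb C^N)^{\otimes K}))$ as a $*$-algebra and $\Phi,\tau$ are $*$-homomorphisms, to promote this to the commutativity of the full diagram I would only need to check that the restriction $\gamma|_{C(S^{N-1}_{\mathbb C,K})_0}$ is multiplicative and $*$-preserving --- but this is automatic, because Corollary \ref{cor:Phibijective} gives that $\Phi$ is an isomorphism onto $C(S^{N-1}_{\mathbb C,K})_0$, so the identity $\gamma\circ\Phi=\Phi\circ\tau$ forces $\gamma|_0=\Phi\tau\Phi^{-1}$. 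Since $\tau$ is itself a $*$-algebra automorphism of $C(\mathcal P_u((\mathbb C^N)^{\otimes K}))$ (being induced by the homeomorphism coming from the cyclic $\mathbb Z_K$-action on $(S^{N-1}_{\mathbb C})^K$, which commutes with the $\mathbb T^{K-1}$-action and hence descends to $\mathcal P_u((\mathbb C^N)^{\otimes K})$ by Lemma \ref{lemm:projpure}), so is $\gamma|_0$.

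\smallskip

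\textbf{Step 4 (ideal correspondence).} Via the isomorphism $\Phi$, $\gamma$-stable closed ideals of $C(S^{N-1}_{\mathbb C,K})_0$ correspond to $\tau$-stable closed ideals of $C(\mathcal P_u((\mathbb C^N)^{\otimes K}))$. By Gelfand duality the latter correspond to closed $\tau$-stable, i.e.\ symmetric in the sense of the preceding paragraph, subsets of $\mathcal P_u((\mathbb C^N)^{\otimes K})$. Composing these two bijections yields the announced correspondence. The bulk of the difficulty is concentrated in the identity of Step 2; once it is in hand the remainder is bookkeeping.
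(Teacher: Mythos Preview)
Your Step 3 contains a circularity. You correctly identify that promoting the equality $\gamma(\Phi(r_{i_1\ldots i_K}))=\Phi(\tau(r_{i_1\ldots i_K}))$ from generators to the whole algebra requires knowing that $\gamma|_0$ is multiplicative. You then claim this is ``automatic'' because $\Phi$ is an isomorphism and ``the identity $\gamma\circ\Phi=\Phi\circ\tau$ forces $\gamma|_0=\Phi\tau\Phi^{-1}$''. But the identity $\gamma\circ\Phi=\Phi\circ\tau$ on the \emph{whole} algebra is exactly what you are trying to prove, and it only follows from the generator computation once you already know $\gamma\circ\Phi$ is a $*$-homomorphism, i.e.\ once $\gamma|_0$ is multiplicative. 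Two linear maps agreeing on a set of $*$-algebra generators need not agree on the algebra they generate unless both are $*$-homomorphisms; $\gamma(a)=\sum_i z_iaz_i^*$ is a priori only linear.

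The paper closes this gap by a direct check: using Proposition \ref{prop:Khalf} one shows (by induction on word length, or equivalently by observing that every $a\in C(S^{N-1}_{\mathbb C,K})_0$ commutes with each $z_i^*z_j$, so that $\gamma(a)\gamma(b)=\sum_{i,j}z_iaz_i^*z_jbz_j^*=\sum_{i,j}z_iz_i^*z_jabz_j^*=\gamma(ab)$) that $\gamma$ is multiplicative on $\mathcal O(S^{N-1}_{\mathbb C,K})_0$, hence by continuity on the $0$-component. Only then does the agreement on generators propagate, and only then can one conclude $\gamma|_0=\Phi\tau\Phi^{-1}$. Your Steps 1, 2, and 4 are fine and match the paper; inserting this short multiplicativity argument in place of the circular sentence in Step 3 would make the proof complete.
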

 
\begin{proof}
 Since $\gamma$ commutes with the $\mu_K$-action, it indeed preserves $\mathbb Z_K$-grading of $C(S^{N-1}_{\mathbb C,K})$. We have also $\gamma(1)=1$, and $\gamma$ commutes with the involution. We have, using Proposition \ref{prop:Khalf},
$$\gamma(z_{i_1} \cdots z_{i_K}) = \sum_i z_i z_{i_1} \cdots z_{i_K} z_i^*= \sum_i z_{i_K}z_i^*z_iz_{i_1} \cdots z_{i_{K-1}} = z_{i_K}z_{i_1} \cdots z_{i_{K-1}}$$
$$\gamma(z_{i_K}^* \cdots z_{i_1}^*)= \sum_i z_i z_{i_K}^* \cdots z_{i_1}^* z_i^*=\sum_i z_{i_{K-1}}^* \cdots z_{i_1}^*z_i^*z_i z_{i_K}^*=z_{i_{K-1}}^* \cdots z_{i_1}^* z_{i_K}^*$$
From this, and using Proposition \ref{prop:Khalf}, one shows by induction that $\gamma$ is an algebra morphism on $\mathcal O(S^{N-1}_{\mathbb C,K})_0$, and hence on $C(S^{N-1}_{\mathbb C,K})_0$. We see also that
 $\gamma\Phi$ and $\Phi \tau$ coincide on the $*$-subalgebra generated by the elements $r_{i_1, \ldots, i_K}$, and we conclude by density that the diagram commutes. This shows simultaneously that $\gamma$ induces a $*$-algebra automorphism of $C(S^{N-1}_{\mathbb R,K})_0$ (since $\Phi$ is an isomorphism), and the last assertion follows as well.
\end{proof}

Our next technical result expresses the property of being $K$-symmetric in term of ideals.

\begin{proposition}\label{prop:ideals}
 Let $X \subset S^{N-1}_{\mathbb C, K}$ with $C(X)=C(S^{N-1}_{\mathbb C, K})/I$. 
The following assertions are equivalent.
\begin{enumerate}
\item $X$ is $K$-symmetric.
\item The ideal $I$ is $\mathbb Z_K$-graded, i.e.
$$I = I_0 + I_1 + \cdots + I_{K-1}, \ {\rm with} \ I_l= I \cap C(S^{N-1}_{\mathbb C, K})_l$$  
\item There exists some  $\gamma$-stable ideal $J \subset C(S^{N-1}_{\mathbb C, K})_0$ such that
 $$I = \langle J \rangle =J+  C(S^{N-1}_{\mathbb C, K})_1J + \cdots + C(S^{N-1}_{\mathbb C, K})_{K-1}J$$
\end{enumerate}
\end{proposition}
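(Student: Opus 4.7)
The plan is to establish (1) $\Leftrightarrow$ (2) by interpreting the $\mathbb Z_K$-grading as the isotypic decomposition for the $\mu_K$-action, and then handle (2) $\Leftrightarrow$ (3) via the strong-grading identity $\sum_i z_i z_i^* = 1$.

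For (1) $\Leftrightarrow$ (2): Proposition \ref{prop:ZK-grading} constructs the grading $C(S^{N-1}_{\mathbb C, K}) = \bigoplus_{l=0}^{K-1} C(S^{N-1}_{\mathbb C, K})_l$ precisely as the eigenspace decomposition of the $\mu_K$-action, via $C(S^{N-1}_{\mathbb C, K})_l = \{a : \omega \cdot a = \omega^l a, \ \forall \omega \in \mu_K\}$. The $\mu_K$-action descends to the quotient $C(X) = C(S^{N-1}_{\mathbb C, K})/I$ if and only if $I$ is $\mu_K$-stable, and by standard character theory for finite abelian group actions, $I$ is $\mu_K$-stable if and only if it decomposes as $I = \bigoplus_l (I \cap C(S^{N-1}_{\mathbb C, K})_l)$. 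This gives the equivalence.

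The implication (3) $\Rightarrow$ (2) is essentially immediate: each $C(S^{N-1}_{\mathbb C, K})_l J$ lies in the $l$-th graded piece, and since these pieces are linearly independent, the decomposition $I = J + C_1 J + \cdots + C_{K-1} J$ is automatically the $\mathbb Z_K$-grading of $I$, with $I_0 = J$ and $I_l = C_l J$. For (2) $\Rightarrow$ (3), I would set $J := I \cap C(S^{N-1}_{\mathbb C, K})_0$. This is an ideal of $C(S^{N-1}_{\mathbb C, K})_0$, and $\gamma$-stability follows by combining $\gamma(I) \subset I$ (ideal property) with the fact from Lemma \ref{gamma} that $\gamma$ preserves the $\mathbb Z_K$-grading. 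The containment $C_l J \subset I_l$ is clear from $J \subset I$.

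The main technical point is the reverse inclusion $I_l \subset C_l J$ for $l \geq 1$. Here I would iterate the identity $\sum_i z_i z_i^* = 1$ to obtain
$$1 = \sum_{i_1, \ldots, i_l} z_{i_1} \cdots z_{i_l} z_{i_l}^* \cdots z_{i_1}^*,$$
and then for any $a \in I_l$ write
$$a = \sum_{i_1, \ldots, i_l} (z_{i_1} \cdots z_{i_l}) \cdot (z_{i_l}^* \cdots z_{i_1}^* a).$$
Each factor $z_{i_1} \cdots z_{i_l}$ sits in $C(S^{N-1}_{\mathbb C, K})_l$, while each factor $z_{i_l}^* \cdots z_{i_1}^* a$ belongs to $I$ (by the ideal property) and has degree $-l + l = 0$, hence lies in $I \cap C(S^{N-1}_{\mathbb C, K})_0 = J$. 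This exhibits $a$ as a finite sum in $C_l \cdot J$, finishing the argument. The subtlety worth emphasizing is that this step is exactly where the \emph{strong} gradedness of $C(S^{N-1}_{\mathbb C, K})$ (the fact that $1 \in C_l \cdot C_{-l}$ for every $l$) is used; it is what forces the correspondence between graded ideals of the whole algebra and $\gamma$-stable ideals of the zero component to be bijective, rather than merely injective.
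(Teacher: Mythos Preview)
Your proof is correct and follows essentially the same route as the paper's: the equivalence (1) $\Leftrightarrow$ (2) is dismissed as standard, (3) $\Rightarrow$ (2) is immediate, and for (2) $\Rightarrow$ (3) the paper sets $J=I_0$ and uses the very same identity $a=\sum_{i_1,\ldots,i_l} z_{i_1}\cdots z_{i_l}\, z_{i_l}^*\cdots z_{i_1}^*\, a$ to show $I_l\subset C(S^{N-1}_{\mathbb C,K})_l\, J$. Your added remark interpreting this step as the strong-gradedness of $C(S^{N-1}_{\mathbb C,K})$ is a nice conceptual gloss not made explicit in the paper.
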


\begin{proof}
 The equivalence of (1) and (2) is well-known, since the $\mathbb Z_K$-grading arises from the $\mu_K$-action, while (3) $\Rightarrow$ (2) is obvious. Assume that (2) holds, and put $J:= I_0=I \cap C(S^{N-1}_{\mathbb C, K})_0$. Then $J$ is an ideal in $C(S^{N-1}_{\mathbb C, K})_0$, and is $\gamma$-stable since $I$ is an ideal. It is clear that we have $J +  C(S^{N-1}_{\mathbb C, K})_1J + \cdots + C(S^{N-1}_{\mathbb C, K})_{K-1}J \subset I$. To prove the reverse inclusion, consider $a \in I_l=I \cap C(S^{N-1}_{\mathbb C, K})_l$. We have
$$a= \sum_{i_1, \ldots ,i_l}z_{i_1} \cdots z_{i_{l}} z_{i_{l}}^* \cdots z_{i_{1}}^*a \in  C(S^{N-1}_{\mathbb C, K})_{l}I_0= C(S^{N-1}_{\mathbb C, K})_{l}J$$
and we are done.
\end{proof}

We arrive at the main result of the section, which generalizes the injectivity of the map in Proposition \ref{prop:mapcrossed}.

\begin{theorem}\label{thm:machine}
 Let $X \subset S^{N-1}_{\mathbb C, K}$ be $K$-symmetric. Then there exists a symmetric compact subspace  $T \subset (S^{N-1}_{\mathbb C})^K$ such that the morphism $\pi$ of Proposition \ref{prop:mapcrossed} induces an injective morphism $C(X) \rightarrow C(T) \rtimes \mathbb Z_K$.

The space $T$ is constructed as follows.
\begin{enumerate}
 \item 
 Write  $C(X)=C(S^{N-1}_{\mathbb C, K})/\langle J \rangle$ as in Proposition \ref{prop:ideals}, with $J \subset C(S^{N-1}_{\mathbb C, K})_0$ a $\gamma$-stable ideal. 
\item Consider the isomorphism $\Phi : C(\mathcal P_u((\mathbb C^N)^{\otimes K}))\to C(S^{N-1}_{\mathbb C,K})_0$ from Proposition \ref{prop:connec}: we get an ideal $\Phi^{-1}(J)$ in $C(\mathcal P_u((\mathbb C^N)^{\otimes K}))$, which is the ideal of vanishing functions on a symmetric compact subset $T_0 \subset \mathcal P_u((\mathbb C^N)^{\otimes K})$.
\item The symmetric compact subspace $T \subset (S^{N-1}_{\mathbb C})^K$ is then defined by $T=p^{-1}(T_0)$, where 
$p : (S^{N-1}_{\mathbb C})^K \rightarrow \mathcal P_u((\mathbb C^N)^{\otimes K})$ is the canonical surjection (see Lemma \ref{lemm:projpure}).
\end{enumerate}
\end{theorem}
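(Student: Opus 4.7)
The plan is to unwind the construction step by step, reducing injectivity of the ambient map $\pi$ from Proposition \ref{prop:mapcrossed} to the already-proved injectivity on the degree-zero component (i.e.\ the isomorphism $\Psi\Phi^{-1}$ of Corollary \ref{cor:Phibijective}).

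First, I would translate $K$-symmetry into ideal-theoretic data via Proposition \ref{prop:ideals}: writing $C(X) = C(S^{N-1}_{\mathbb{C},K})/I$, the graded structure of $I$ yields a $\gamma$-stable ideal $J = I_0 \subset C(S^{N-1}_{\mathbb{C},K})_0$ such that $I = \langle J\rangle$. Applying $\Phi^{-1}$ transports $J$ to an ideal of $C(\mathcal P_u((\mathbb C^N)^{\otimes K}))$, which by Gelfand duality is the ideal of vanishing functions on a closed subset $T_0 \subset \mathcal P_u((\mathbb C^N)^{\otimes K})$. The commutative diagram of Lemma \ref{gamma} shows that $\gamma$-stability of $J$ is equivalent to $\tau$-stability of $\Phi^{-1}(J)$, hence to $\mathbb Z_K$-stability of $T_0$. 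Taking $T = p^{-1}(T_0)$ yields a closed symmetric subspace of $(S^{N-1}_{\mathbb C})^K$, because $p$ is $\mathbb Z_K$-equivariant (a direct check shows that the cyclic permutation of factors descends, under $p$, to the $\tau$-action on pure tensors used in Lemma \ref{gamma}).

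Next, I would check that $\pi$ descends to $\bar\pi : C(X) \to C(T)\rtimes \mathbb Z_K$. The symmetry of $T$ makes $I(T)$ a $\mathbb Z_K$-invariant ideal, so there is a natural surjection $C((S^{N-1}_{\mathbb C})^K)\rtimes \mathbb Z_K \twoheadrightarrow C(T)\rtimes \mathbb Z_K$ and I need $\pi(I)$ to land in its kernel. On the zero component, the identity $\pi\Phi = \Psi\otimes 1$ (from the proof of Theorem \ref{thm:faithmodel}) gives $\pi(J) = \Psi(\Phi^{-1}(J))\otimes 1$, i.e.\ $\mathbb T^{K-1}$-invariant functions vanishing on $T_0$, and these vanish on all of $T=p^{-1}(T_0)$. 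For $l\geq 1$, using $I_l = C(S^{N-1}_{\mathbb C,K})_l\cdot J$ together with the crossed-product formula $(f\otimes\tau^l)(g\otimes 1) = f\cdot\tau^l(g)\otimes\tau^l$ and the $\mathbb Z_K$-invariance of $I(T)$, one obtains $\pi(I_l)\subset I(T)\otimes\tau^l$. Hence $\pi(I)$ is killed by the quotient map, and $\bar\pi$ exists.

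Finally, I would prove injectivity of $\bar\pi$ by the standard grading argument already used in Theorem \ref{thm:faithmodel}. Both source and target carry $\mathbb Z_K$-gradings coming from the $\mu_K$-actions, and $\bar\pi$ respects them; the induced map on degree zero is, via $\Phi$ and $\Psi$, the pullback $C(T_0)\to C(T)$ along the surjection $p|_T : T\twoheadrightarrow T_0$, which is injective. For a homogeneous element $a\in C(X)_l$ in $\ker\bar\pi$, the element $a^*a$ lies in $C(X)_0\cap \ker\bar\pi$, hence vanishes, forcing $a=0$; averaging via the $\mu_K$-action then propagates injectivity from homogeneous elements to the whole algebra. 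The embedding $C(T)\rtimes\mathbb Z_K\subset M_K(C(T))$ recovers the matrix model picture if desired. The only nontrivial step is verifying the $\mathbb Z_K$-equivariance of $p$ and the three-way compatibility between the grading on $C(S^{N-1}_{\mathbb C,K})$, the $\mathbb T^{K-1}$-invariance on $(S^{N-1}_{\mathbb C})^K$, and the $\mathbb Z_K$-action on $T$; once these compatibilities are in hand, the rest is bookkeeping around material already developed in Sections 4–6.
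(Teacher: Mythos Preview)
Your proposal is correct and follows essentially the same route as the paper: construct $T$ from the $\gamma$-stable ideal $J$ via the isomorphisms $\Phi,\Psi$, then reduce injectivity to the degree-zero component using the $\mathbb Z_K$-grading. The only organizational difference is that the paper works directly with the composed map $C(S^{N-1}_{\mathbb C,K}) \to C(T)\rtimes\mathbb Z_K$ and identifies its kernel as $\langle J\rangle$ by invoking Proposition~\ref{prop:ideals} (a graded ideal is generated by its degree-zero part), whereas you first verify descent and then use the $a^*a$ trick to propagate injectivity from degree zero to all degrees---both packagings are standard and equivalent here.
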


\begin{proof}
 The space $T$ is constructed following the procedure in the statement of the proposition. Since $T$ is symmetric, we can form the crossed product $C(T)\rtimes \mathbb Z_K$, and using restriction of functions, we get the $*$-algebra map 
$$C(S^{N-1}_{\mathbb C, K}) \rightarrow C(T)\rtimes \mathbb Z_K, \ z_i \mapsto a_{i0} \otimes \tau$$
that we still call $\pi$. Since $\pi$ is still morphism of $\mathbb Z_K$-graded algebras (as in the proof of Theorem \ref{thm:faithmodel}), then $Ker(\pi)$ is $\mathbb Z_K$-graded and, by Proposition \ref{prop:ideals}, it is enough to show that the $Ker(\pi) \cap C(S^{N-1}_{\mathbb C, K})_0$ equals the ideal $J$ that we started with. So let $a \in C(S^{N-1}_{\mathbb C, K})_0$, with $a=\Phi(f)$ and $f \in C(  P_u((\mathbb C^N)^{\otimes K}))$. Then, again similarly to the proof of Theorem \ref{thm:faithmodel}, we have $\pi(a)= \pi(\Phi(f))= \Psi(f)_{|T} \otimes 1= fp_{|T} \otimes 1$, hence $a \in Ker(\pi)$ if and only $fp$ is zero on $T$, if and only if $f$ vanishes on $T_0$, if and only if $f \in \Phi^{-1}(J)$, hence $a \in Ker(\pi)$ if and only if $a \in J$. This concludes the proof.
\end{proof}

Starting now from  $X \subset S^{N-1}_{\mathbb C,+}$ assumed to be $K$-symmetric, Theorem \ref{thm:machine} applied to $X_{1/K-class}$, together with the standard matrix model of the crossed product, yields:

\begin{theorem}\label{thm:faith1/K}
 Let $X \subset S^{N-1}_{\mathbb C,+}$ be $K$-symmetric. Then there exists a faithful matrix model
$$C(X_{1/K-class}) \longrightarrow M_K(C(T))$$
where $T$ is an appropriate symmetric compact subset of $(S^{N-1}_{\mathbb C})^K$. In particular we have
$X_{1/K-class} \subset X^{(K)}$. 
\end{theorem}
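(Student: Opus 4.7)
The plan is to apply the machinery of Theorem \ref{thm:machine} directly to $X_{1/K-class}$, viewed as a closed subspace of $S^{N-1}_{\mathbb C,K}$. The first step is to check that $X_{1/K-class}$ is itself $K$-symmetric, so that Theorem \ref{thm:machine} is applicable. Since $X$ is $K$-symmetric by hypothesis, the $\mu_K$-action $\omega\cdot z_i=\omega z_i$ descends to $C(X)$. The additional relations imposed in passing from $C(X)$ to $C(X_{1/K-class})$, namely $[z_{i_1}\cdots z_{i_K},z_{j_1}\cdots z_{j_K}]=0$ and $[z_{i_1}\cdots z_{i_K},z_{j_1}^*\cdots z_{j_K}^*]=0$, are each generated by elements that are fixed by $\mu_K$: indeed $\omega\cdot(z_{i_1}\cdots z_{i_K})=\omega^K z_{i_1}\cdots z_{i_K}=z_{i_1}\cdots z_{i_K}$ and likewise $\omega\cdot(z_{j_1}^*\cdots z_{j_K}^*)=\omega^{-K}z_{j_1}^*\cdots z_{j_K}^*=z_{j_1}^*\cdots z_{j_K}^*$. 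Hence the ideal they generate is $\mu_K$-stable, and the $\mu_K$-action descends further to $C(X_{1/K-class})$, as announced in the excerpt just after Remark \ref{rem: patho}.

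Next, Theorem \ref{thm:machine} applied to $X_{1/K-class}\subset S^{N-1}_{\mathbb C,K}$ produces a symmetric compact subspace $T\subset(S^{N-1}_{\mathbb C})^K$ together with an injective $*$-homomorphism
$$C(X_{1/K-class})\hookrightarrow C(T)\rtimes\mathbb Z_K,\quad z_i\longmapsto a_{i,0}\otimes\tau.$$
Composing this with the standard embedding $C(T)\rtimes\mathbb Z_K\subset M_K(C(T))$ coming from the permutation matrix of a $K$-cycle (exactly as used at the end of the proof of Theorem \ref{thm:faithmodel}), we obtain the desired faithful matrix model
$$C(X_{1/K-class})\longrightarrow M_K(C(T))$$
with $T$ the appropriate symmetric compact subset of $(S^{N-1}_{\mathbb C})^K$ furnished by Theorem \ref{thm:machine}.

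For the inclusion $X_{1/K-class}\subset X^{(K)}$, the faithful $K\times K$ model just constructed implies, via Proposition \ref{prop:faithimplX=XK}, that $X_{1/K-class}=(X_{1/K-class})^{(K)}$. In particular every irreducible representation of $C(X_{1/K-class})$ has dimension at most $K$, and since $X_{1/K-class}\subset X$, Proposition \ref{prop:X=XK} yields $X_{1/K-class}\subset X^{(K)}$.

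The proof is thus essentially an assembly of prior results: the only genuine point to verify is the $K$-symmetry of $X_{1/K-class}$, and there is no serious obstacle because Theorem \ref{thm:machine} has already done the heavy lifting of producing the crossed-product model from a $K$-symmetric subspace of $S^{N-1}_{\mathbb C,K}$.
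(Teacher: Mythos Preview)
Your proof is correct and follows exactly the approach of the paper: apply Theorem~\ref{thm:machine} to $X_{1/K-class}\subset S^{N-1}_{\mathbb C,K}$ and compose with the standard crossed-product embedding into matrices. You have simply made explicit the two points the paper leaves to the reader, namely the $K$-symmetry of $X_{1/K-class}$ (stated without proof just after Remark~\ref{rem: patho}) and the deduction of $X_{1/K-class}\subset X^{(K)}$ from Propositions~\ref{prop:X=XK} and~\ref{prop:faithimplX=XK}.
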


We end the the section by discussing  a possible future research direction.
The above considerations strongly suggest the following definition:

\begin{definition}
Associated to $X\subset S^{N-1}_{\mathbb C,+}$ is the space $X^{\{K\}}\subset S^{N-1}_{\mathbb C,+}$ given by
$$C(X^{\{K\}})\subset C(X)^{\otimes K}\rtimes\mathbb Z_K$$
where the group $\mathbb Z_K$ acts cyclically on the tensor product $C(X)^{\otimes K}$, and $C(X^{\{K\}})$ is the $C^*$-subalgebra generated by the elements $z_i\otimes 1 \otimes \ldots \otimes 1 \otimes \tau$.
\end{definition}

Indeed, we have shown that $(S^{N-1}_{\mathbb C})^{\{K\}}= S^{N-1}_{\mathbb C,K}$. Starting with non-classical $X$, finding a presentation of $C(X^{\{K\}})$ from one of $C(X)$ seems to be more difficult: the general scheme of the proof of Theorem \ref{thm:faithmodel} is still valid, but the geometric techniques from Section 4 needed to study the grade $0$ part are no longer available. We believe that this an interesting problem.

\section{Quantum groups}

We now apply the previous considerations to construct new classes of compact quantum groups.

\begin{definition}
 The quantum group $ U_{N,K}^*\subset U^{+}_{N}$ defined by
$$C(U_{N,K}^*)= C(U_N^+)\Big/\Big< [u_{i_1j_1}\cdots u_{i_Kj_K}, u_{k_{1}l_1} \cdots u_{k_Kl_K}]=0
\Big>$$
is called the $K$-half-liberated unitary quantum group.
\end{definition}

It is straightforward that this indeed defines a quantum group. We did not include the second family of relations from the definition of $S^{N-1}_{\mathbb C, K}$, since they follow easily from the other relations:

\begin{proposition}\label{prop: addrelUNK}
In $C(U_{N,K}^*)$, we have as well
 $$[u_{i_1j_1}\cdots u_{i_Kj_K}, u_{k_{1}l_1}^* \cdots u_{k_Kl_K}^*]=0$$
\end{proposition}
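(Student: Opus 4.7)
My plan is to derive the second family of relations from the first one together with the biunitarity of $u$.

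First, I would take $*$-adjoints of the defining relation $[u_{i_1j_1}\cdots u_{i_Kj_K}, u_{k_1l_1}\cdots u_{k_Kl_K}]=0$. Since the relation holds for all tuples of indices, after relabeling one obtains that length-$K$ products of $u^*$'s in any order pairwise commute: for all indices,
\[
[u_{i_1j_1}^*\cdots u_{i_Kj_K}^*,\; u_{k_1l_1}^*\cdots u_{k_Kl_K}^*]=0.
\]
This gives the commutation of $u^*$-blocks "for free" from the hypothesis.

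Second, I would use biunitarity: iterating $\sum_m u_{im}u_{im}^*=1$ (which comes from $uu^*=1$ on the diagonal) yields, for any fixed $k_1,\ldots,k_K$,
\[
\sum_{m_1,\ldots,m_K} u_{k_1m_1}\cdots u_{k_Km_K}\cdot u_{k_Km_K}^*\cdots u_{k_1m_1}^*=1.
\]
Let $P=u_{i_1j_1}\cdots u_{i_Kj_K}$ and $Q^*=u_{k_1l_1}^*\cdots u_{k_Kl_K}^*$. Inserting this identity between $P$ and $Q^*$, then applying (i) the defining relation to slide $P$ past $u_{k_1m_1}\cdots u_{k_Km_K}$, and (ii) the commutation of $u^*$-blocks (from the first step) to slide $Q^*$ past $u_{k_Km_K}^*\cdots u_{k_1m_1}^*$, I obtain the identity
\[
PQ^* = \sum_{m_\cdot} u_{k_1m_1}\cdots u_{k_Km_K}\,(PQ^*)\,u_{k_Km_K}^*\cdots u_{k_1m_1}^*,
\]
i.e.\ $PQ^*$ is fixed by the conjugation-averaging operator $\Phi(X)=\sum_{m_\cdot} u_{k_1m_1}\cdots u_{k_Km_K}\,X\,u_{k_Km_K}^*\cdots u_{k_1m_1}^*$.

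The hard step is obtaining the analogous identity for $Q^*P$: a direct attempt to repeat the manipulation encounters circularity, because sliding $Q^*$ past $u_{k_1m_1}\cdots u_{k_Km_K}$ (or $P$ past $u_{k_Km_K}^*\cdots u_{k_1m_1}^*$) is precisely the statement to be proved. I expect the cleanest way around this obstacle is a Tannakian argument: the defining relation is equivalent to asserting that the flip operator on $V^{\otimes K}\otimes V^{\otimes K}$ (with $V=\mathbb C^N$ carrying the fundamental corepresentation $u$) is an intertwiner for $U_{N,K}^*$. By rigidity of the tensor category of corepresentations---ensured by biunitarity, which provides the dual $\bar u$ with entries $u_{ij}^*$ together with the evaluation and coevaluation maps---the corresponding flip on $V^{\otimes K}\otimes \bar V^{\otimes K}$ is also an intertwiner. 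Unfolding its matrix entries yields exactly the claimed relation $[u_{i_1j_1}\cdots u_{i_Kj_K}, u_{k_1l_1}^*\cdots u_{k_Kl_K}^*]=0$. The main obstacle is thus to either (a) make the rigidity/bending-of-legs argument completely explicit, or (b) find the missing algebraic insertion that transforms the one-sided fixed-point identity for $PQ^*$ into a full equality with $Q^*P$.
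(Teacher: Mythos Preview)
Your conjugation-averaging identity $PQ^*=\Phi(PQ^*)$ is correct but, as you observe yourself, it does not close: you cannot deduce $PQ^*=Q^*P$ from it without precisely the commutation you are trying to prove. So as it stands the argument has a genuine gap.

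The Tannakian route you sketch at the end is the right one, and it is in fact exactly what the paper does---but the explicit ``leg-bending'' computation is a two-line algebraic manipulation, much shorter than your approach suggests. Work at the level of the general lemma: if $(u_{ij})$ and $(v_{kl})$ are unitary matrices of coordinates with $u_{ij}v_{kl}=v_{kl}u_{ij}$ for all indices, then $u_{ij}v_{pq}^*=v_{pq}^*u_{ij}$. Indeed, multiply $u_{ij}v_{kl}=v_{kl}u_{ij}$ on the right by $v_{pl}^*$ and sum over $l$; since $\sum_l v_{kl}v_{pl}^*=\delta_{kp}$ (from $vv^*=1$) this gives
\[
\delta_{kp}\,u_{ij}=\sum_l v_{kl}\,u_{ij}\,v_{pl}^*.
\]
Now multiply on the left by $v_{kq}^*$ and sum over $k$; since $\sum_k v_{kq}^*v_{kl}=\delta_{ql}$ (from $v^*v=1$) you obtain $v_{pq}^*u_{ij}=u_{ij}v_{pq}^*$. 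Applying this with $u=v=u^{\otimes K}$ (whose entries are the products $u_{i_1j_1}\cdots u_{i_Kj_K}$, and which is unitary because $u$ is biunitary) yields the proposition directly. The detour through the averaging operator $\Phi$ is unnecessary: one insertion of $vv^*=1$ on the right and one of $v^*v=1$ on the left suffices.
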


\begin{proof}
This follows from the well-known fact that if the coefficients of two unitary representations $(u_{ij})$, $(v_{kl})$ of a quantum group pairwise commute, then the coefficients of $(u_{ij})$ and  $(v_{kl}^*)$ also pairwise commute.
To check this, start with the relations $u_{ij}v_{kl}=v_{kl}u_{ij}$, multiply on the right by $v_{pl}^*$ and sum over $l$ to get $\delta_{kp}u_{ij}= \sum_l v_{kl}u_{ij}v_{pl}^*$. Now multiplying on the left by $v_{kq}^*$ and summing over $k$ yields $v_{pq}^*u_{ij}=u_{ij}v_{pq}^*$, as needed.
\end{proof}

We then have 
$$U_{N,1}^*=U_N, \quad U_{N,2}^*= U_N^{**}$$
where $U_N^{**}$ is the half-liberated unitary quantum group from \cite{bdu}. Similarly to Remark \ref{rem:nonreal}, the orthogonal version of the above construction would not lead to any new quantum group.

More generally, recall from Section 1 that we have an embedding $U_N^+ \subset S^{N^2-1}_{\mathbb C,+}$ coming from the presentation
$$C(U_N^+)=C(S^{N^2-1}_{\mathbb C,+})\Big/\Big(zz^*=z^*z=\bar{z}z^t=z^t\bar{z}=\frac{1}{N}\cdot1_N\Big)$$
It is clear from this presentation  that $U_N^+\subset S^{N^2-1}_{\mathbb C,+}$ is $K$-symmetric, and that we have
$$U_{N,K}^* \subset S^{N^2-1}_{\mathbb C,K}, \ U_{N,K}^*= (U_N^+)_{1/K-class}$$
We therefore can use the machinery of Theorem \ref{thm:machine}, and after some tedious identifications, we get:

\begin{theorem}\label{thm:UNK*}
 We have an injective morphism of $C^*$-algebras
$$C(U_{N,K}^*) \longrightarrow C(U_{N}^{K}) \rtimes \mathbb Z_K $$
where $\mathbb Z_K$ acts cyclically on the product $U_N^K$.
\end{theorem}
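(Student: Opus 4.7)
The plan is to invoke Theorem~\ref{thm:machine} with $X=U_{N,K}^*$, viewed inside $S^{N^2-1}_{\mathbb C,K}$ via the rescaling $z_{ij}=u_{ij}/\sqrt N$. The paper has already noted that $U_N^+\subset S^{N^2-1}_{\mathbb C,+}$ is $K$-symmetric (the four biunitarity relations $zz^*=z^*z=\bar z z^t=z^t\bar z=\frac{1}{N}\cdot 1_N$ are each bilinear in $(z,z^*)$ and hence $\mu_K$-invariant) and that $U_{N,K}^*=(U_N^+)_{1/K-class}$; since $1/K$-classicalization preserves $K$-symmetry, so is $U_{N,K}^*$. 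Theorem~\ref{thm:machine} then supplies a symmetric compact $T\subset(S^{N^2-1}_\mathbb C)^K$ and an injection $C(U_{N,K}^*)\hookrightarrow C(T)\rtimes\mathbb Z_K$, so the proof reduces to identifying $T$ with $U_N^K$ (up to the $\sqrt{N}$ rescaling).

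To make this identification I would unwind the recipe for $T$ in Theorem~\ref{thm:machine}. Write $C(U_{N,K}^*)=C(S^{N^2-1}_{\mathbb C,K})/\langle J\rangle$, where $J\subset C(S^{N^2-1}_{\mathbb C,K})_0$ is the $\gamma$-stable ideal generated by the four biunitarity relations. The typical generator $f_{ij}=\sum_k z_{ik}z_{jk}^*-\frac{1}{N}\delta_{ij}$ can be realized as $\Phi(g_{ij})$ for an explicit $g_{ij}\in C(\mathcal P_u((\mathbb C^{N^2})^{\otimes K}))$, by padding with the standard resolution of the identity
$$z_{ik}z_{jk}^*=\sum_{\alpha_1,\ldots,\alpha_{K-1}}z_{ik}z_{\alpha_1}\cdots z_{\alpha_{K-1}}z_{\alpha_{K-1}}^*\cdots z_{\alpha_1}^*z_{jk}^*.$$
Evaluating $g_{ij}$ at a pure tensor $x_0\otimes\cdots\otimes x_{K-1}$ and using $\sum_\alpha|(x_c)_\alpha|^2=1$ for $c\geq 1$, the inner sums telescope to $1$ and the vanishing condition becomes exactly $(x_0x_0^*)_{ij}=\frac{1}{N}\delta_{ij}$, i.e.\ $\sqrt{N}x_0\in U_N$. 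Since $\gamma$ corresponds under $\Phi$ to the cyclic permutation $\tau$ of tensor factors (Lemma~\ref{gamma}), the $\gamma$-orbit of $f_{ij}$ produces the same condition on every component $x_c$, and the other three biunitarity relations contribute analogously.

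Collecting these, $T_0\subset\mathcal P_u((\mathbb C^{N^2})^{\otimes K})$ is the locus where $\sqrt{N}x_c\in U_N$ for every $c$, so $T=p^{-1}(T_0)$ becomes $U_N^K$ under the rescaling homeomorphism, and the injection of Theorem~\ref{thm:machine} rewrites as the desired $C(U_{N,K}^*)\hookrightarrow C(U_N^K)\rtimes\mathbb Z_K$. The main obstacle is precisely this book-keeping in step two: one must verify that the $\gamma$-orbit of the four biunitarity relations cuts out \emph{exactly} the locus where each copy is separately unitary, with no extra cross-relations mixing different copies. The decisive mechanism is that the $K-1$ ``spectator'' factors in the expansion above contribute only $\sum_\alpha|(x_c)_\alpha|^2=1$ and drop out, leaving the biunitarity constraint on the $c$-th slot uncoupled from the others.
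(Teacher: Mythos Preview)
Your proposal is correct and follows exactly the route the paper indicates: invoke Theorem~\ref{thm:machine} for $X=U_{N,K}^*=(U_N^+)_{1/K-class}\subset S^{N^2-1}_{\mathbb C,K}$ and then carry out the ``tedious identifications'' of $T$ with (rescaled) $U_N^K$. In fact you have written out more of those identifications than the paper does; the key computation---that under $\Phi^{-1}$ and evaluation at a pure tensor the padded biunitarity relations collapse, via $\sum_\alpha|(x_c)_\alpha|^2=1$, to the unitarity of a single factor, with the $\gamma$-orbit (i.e.\ $\tau$-orbit, by Lemma~\ref{gamma}) propagating this to every slot---is exactly the content the paper suppresses.
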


The above embedding is compatible with the respective comultiplications as well, so it is possible, similarly to \cite{bdu}, to describe the irreducible representations of the quantum group $U_{N,K}^*$ in terms of those of the compact group $U_N^K$. Note that $U_{N,K}^*$ is an easy quantum group as well, see the next section for more details.

To conclude this section, let us point out that the considerations of Section 6 may be applied to any $K$-symmetric quantum subgroup $G \subset U_{N}^+$, yielding a $1/K$-classical version of $G$. This applies
as well to diagonal dual subgroups of $U_N^+$, but in that precise framework, much more direct arguments can be used to prove the analogue of the previous theorem.

\section{The limit cases}

We now introduce  the ``limit'' cases of our $K$-half-liberated spheres and quantum groups.  
The following definition is inspired by the second relations in Proposition \ref{prop:Khalf}, which do not depend on $K$.

\begin{definition}
The strong half-liberated complex sphere is defined by
$$C(S^{N-1}_{\mathbb C,\infty})=C(S^{N-1}_{\mathbb C,+})\Big/\Big<\{z_i^*z_j,z_jz_i^*\}\ all\ commute\Big>$$
\end{definition}

\begin{proposition}\label{prop:incluinfty}
 We have, for any $K$,  strict inclusions $S^{N-1}_{\mathbb C,K}\subset S^{N-1}_{\mathbb C,\infty}\subset S^{N-1}_{\mathbb C,*}$.
\end{proposition}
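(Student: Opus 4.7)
My plan is to establish each of the two inclusions first and then argue their strictness separately. The second strictness is the main obstacle.

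For the inclusions themselves, no work beyond the earlier propositions is needed. Proposition \ref{prop:Khalf}(2) states exactly that in $C(S^{N-1}_{\mathbb C,K})$ the families $\{z_iz_j^*\}$ and $\{z_i^*z_j\}$ together mutually commute, i.e.\ the defining relations of $C(S^{N-1}_{\mathbb C,\infty})$ hold; this yields a well-defined surjection $C(S^{N-1}_{\mathbb C,\infty})\twoheadrightarrow C(S^{N-1}_{\mathbb C,K})$, i.e.\ $S^{N-1}_{\mathbb C,K}\subset S^{N-1}_{\mathbb C,\infty}$. For the inclusion $S^{N-1}_{\mathbb C,\infty}\subset S^{N-1}_{\mathbb C,*}$, I would invoke Proposition \ref{prop:incluhalf}(1): the defining relations of $S^{N-1}_{\mathbb C,\infty}$ include, as a special case, the pairwise commutation of $\{z_iz_j^*\}$ and of $\{z_i^*z_j\}$ among themselves, which is exactly the characterization of subspaces of $S^{N-1}_{\mathbb C,*}$.

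For the strictness $S^{N-1}_{\mathbb C,K}\subsetneq S^{N-1}_{\mathbb C,\infty}$, I plan to use a representation-dimension argument based on Corollary \ref{cor:irrSNK}. By part (1) of that corollary, $C(S^{N-1}_{\mathbb C,K+1})$ admits an irreducible representation of dimension $K+1$; composing with the surjection $C(S^{N-1}_{\mathbb C,\infty})\twoheadrightarrow C(S^{N-1}_{\mathbb C,K+1})$ from the first step produces a $(K+1)$-dimensional irreducible representation of $C(S^{N-1}_{\mathbb C,\infty})$. On the other hand, by part (2) of the same corollary, every irreducible representation of $C(S^{N-1}_{\mathbb C,K})$ has dimension at most $K$, so the surjection $C(S^{N-1}_{\mathbb C,\infty})\twoheadrightarrow C(S^{N-1}_{\mathbb C,K})$ cannot be an isomorphism, and the inclusion is strict.

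The hard part will be the strictness $S^{N-1}_{\mathbb C,\infty}\subsetneq S^{N-1}_{\mathbb C,*}$. The previous trick is of no use here, since both algebras admit irreducible representations of every finite dimension through the tower $\{C(S^{N-1}_{\mathbb C,K'})\}_{K'\ge 1}$. Instead, my plan is to exhibit a concrete representation of $C(S^{N-1}_{\mathbb C,*})$ on which the additional relation $[z_iz_j^*,z_k^*z_l]=0$ fails for some indices $i,j,k,l$. A natural source of such a representation is the full half-liberated unitary quantum group $U_N^*$ from \cite{bdd}, together with the standard embedding $U_N^*\subset S^{N^2-1}_{\mathbb C,*}$ obtained by rescaling its fundamental corepresentation: one checks that the Tannakian category of partitions describing $U_N^*$ is strictly smaller than the one needed to force the $\infty$-commutations, so there exist entries $u_{ij},u_{kl}$ of the fundamental corepresentation with $[u_{ij}u_{kl}^*,u_{mn}^*u_{pq}]\neq 0$, providing the required witness. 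The technical subtlety is that the simple cyclic-shape $K\times K$ matrix models of the half-liberated spheres automatically produce diagonal $z_iz_j^*$ and $z_k^*z_l$, which commute trivially; hence a genuinely non-cyclic construction, coming from the richer structure of $U_N^*$, is needed to break the extra commutation.
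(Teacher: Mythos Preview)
Your treatment of the two inclusions and of the strictness $S^{N-1}_{\mathbb C,K}\subsetneq S^{N-1}_{\mathbb C,\infty}$ is correct and coincides with the paper's argument: Proposition~\ref{prop:Khalf}(2) and Proposition~\ref{prop:incluhalf}(1) give the inclusions, and Corollary~\ref{cor:irrSNK} supplies the dimension obstruction for the first strictness, exactly as you wrote.

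The gap is in the second strictness. What you have written there is a plan, not a proof: the sentence ``one checks that the Tannakian category of partitions describing $U_N^*$ is strictly smaller than the one needed to force the $\infty$-commutations, so there exist entries \ldots'' hides the entire difficulty. First, you would need to establish that $U_N^*$ is easy and identify its category of partitions; second, showing that two partition categories differ does \emph{not} automatically produce a nonzero commutator $[u_{ij}u_{kl}^*,u_{mn}^*u_{pq}]$ in $C(U_N^*)$ --- the passage from ``$T_\pi$ is not an intertwiner'' to ``a specific relation fails'' requires a linear-independence argument for the maps $T_\pi$ that is sensitive to $N$; third, even granting all this, your witness lives in $C(U_N^*)$, hence in $C(S^{N^2-1}_{\mathbb C,*})$, so you only obtain strictness for the spheres of dimension $N^2-1$, and an extra step is needed to cover every $N$.

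The paper avoids all of this by an explicit construction: it builds a $*$-homomorphism
\[
\rho: C(S^{N-1}_{\mathbb C,*}) \longrightarrow \bigl(C(S^{N-1}_{\mathbb R,*})*C(S^{N-1}_{\mathbb R,*})\bigr)\rtimes\mathbb Z_2,\qquad z_i\mapsto x_i\otimes\theta,\quad z_i^*\mapsto y_i\otimes\theta,
\]
where $\theta$ swaps the two free factors. One computes $\rho(z_iz_j^*\cdot z_k^*z_l)=x_ix_jy_ky_l\otimes 1$ and $\rho(z_k^*z_l\cdot z_iz_j^*)=y_ky_lx_ix_j\otimes 1$; since $x_ix_jy_ky_l\neq y_ky_lx_ix_j$ in the free product, the relation $[z_iz_j^*,z_k^*z_l]=0$ fails in $C(S^{N-1}_{\mathbb C,*})$ and the inclusion is strict. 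This is direct, works for every $N$, and bypasses any Tannakian analysis.
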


\begin{proof}
The first inclusion follows from Proposition \ref{prop:Khalf}. This  is a strict inclusion since by Corollary \ref{cor:irrSNK}, any irreducible representation of $C(S^{N-1}_{\mathbb C,K})$ has dimension $\leq K$, while since  $S^{N-1}_{\mathbb C,K}\subset S^{N-1}_{\mathbb C,\infty}$ for any $K$,  Corollary \ref{cor:irrSNK} implies that  $C(S^{N-1}_{\mathbb C,\infty})$ has irreducible representations of any possible finite dimension. The second inclusion  comes from Proposition \ref{prop:incluhalf}. To prove strictness of this second inclusion, we use ideas from the theory of graded twisting \cite{bny}. Consider the free product $C^*$-algebra $C(S^{N-1}_{\mathbb R, *})*C(S^{N-1}_{\mathbb R, *})$, with the canonical generators of the first copy denoted $x_1, \ldots , x_N$, and the generators of the second copy denoted $y_1, \ldots, y_N$. Denote by $\theta$ the involutive automorphism of $C(S^{N-1}_{\mathbb R, *})*C(S^{N-1}_{\mathbb R, *})$ that exchanges $x_i$ and $y_i$, and form the corresponding crossed product $C(S^{N-1}_{\mathbb R, *})*C(S^{N-1}_{\mathbb R, *})\rtimes \mathbb Z_2$. Then, similarly to Example 3.7 in \cite{bny}, there is a morphism
$$\rho : C(S^{N-1}_{\mathbb C,*}) \rightarrow C(S^{N-1}_{\mathbb R, *})*C(S^{N-1}_{\mathbb R, *})\rtimes \mathbb Z_2, \ z_i, z_i^* \mapsto x_i \otimes \theta, y_i \otimes \theta$$
We then have
$$\rho(z_iz_j^*z_k^*z_l)= x_ix_jy_ky_l \otimes 1, \ \rho(z_k^*z_lz_iz_j^*)=y_ky_lx_ix_j \otimes 1$$
Hence if we had $z_iz_j^*z_k^*z_l=z_k^*z_lz_iz_j^*$ in $C(S^{N-1}_{\mathbb C,*})$, the relations $x_ix_jy_ky_l= y_ky_lx_ix_j$ would hold in $C(S^{N-1}_{\mathbb R, *})*C(S^{N-1}_{\mathbb R, *})$, which is not true, by general properties of the free product \cite{nsp}. It follows that the canonical morphism $C(S^{N-1}_{\mathbb C,*}) \rightarrow C(S^{N-1}_{\mathbb C,\infty})$ is not injective, and our inclusion is strict.
\end{proof}


We remark that
Corollary \ref{cor:Delta_K} suggests the definition of another limit sphere
$$C(S_{\mathbb C, \infty^{-}}^{N-1})= C(S_{\mathbb C, +}^{N-1})/ \Big/\Big<[x,y]=0, \ x,y\in\Delta_\infty\Big>$$
where 
$$\Delta_\infty=\left\{z_{i_1}^{e_1}\ldots z_{i_s}^{e_s}\Big|\ s \geq 0, \ e_i \in\{1,*\}, \ \#\{e_i=1\}=\#\{e_i=*\}\right\}$$ 
We have $S^{N-1}_{\mathbb C,K}\subset S^{N-1}_{\mathbb C,\infty^{-}}\subset S^{N-1}_{\mathbb C,\infty}\subset S^{N-1}_{\mathbb C,*}$. It is unclear to us whether the inclusion $S^{N-1}_{\mathbb C,\infty^{-}} \subset S^{N-1}_{\mathbb C,\infty}$ is strict, and if the inclusion is strict, it is as well unclear whether $C(S_{\mathbb C, \infty^{-}}^{N-1})$ has a finite presentation. Another interesting open question is: do we have $S^{N-1}_{\mathbb C , \infty^{-}} = \cup_{K\geq 1} S^{N-1}_{\mathbb C, K}$?

At the quantum group level, the corresponding definition of the limit quantum group is as follows:

\begin{definition}
The strong half-liberated unitary quantum group is defined by
$$C(U_{N, \infty})=C(U_N^{+})\Big/\Big<\{u_{ij}^*u_{kl},u_{ij}u_{kl}^*\}\ all\ commute\Big>$$
\end{definition}

Similarly to Proposition \ref{prop:incluinfty}, we have:

\begin{proposition}
 We have, for any $K$, strict  inclusions $U_{N,K}\subset U_{N,\infty}\subset U_{N,*}$.
\end{proposition}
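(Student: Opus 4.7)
The plan is to mirror the proof of Proposition \ref{prop:incluinfty} in the quantum group setting, using the tools developed in Section 7. For the inclusion $U_{N,K}^* \subset U_{N,\infty}$, I would exploit the fact that $U_{N,K}^* = (U_N^+)_{1/K-class}$ embeds into $S^{N^2-1}_{\mathbb C,K}$ through the rescaling $z_{ij} = u_{ij}/\sqrt{N}$ discussed in Section 7. Proposition \ref{prop:Khalf}(2), applied to these rescaled coordinates, yields the commutation of all $u_{ij}^*u_{kl}$ and $u_{ij}u_{kl}^*$ inside $C(U_{N,K}^*)$, which are precisely the defining relations of $C(U_{N,\infty})$.

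For strictness of this first inclusion, Theorem \ref{thm:UNK*} provides an injection $C(U_{N,K}^*) \hookrightarrow C(U_N^K) \rtimes \mathbb Z_K \subset M_K(C(U_N^K))$, so every irreducible representation of $C(U_{N,K}^*)$ has dimension at most $K$. The quantum group analogue of Corollary \ref{cor:irrSNK}(1), obtained by evaluating the crossed product model at a suitably generic point of $U_N^K$ (so that the commutant of the resulting matrices reduces to scalars), produces irreducibles of dimension exactly $K$. Since each $U_{N,K'}^* \subset U_{N,\infty}$, these lift along the surjections $C(U_{N,\infty}) \twoheadrightarrow C(U_{N,K'}^*)$ for every $K'$, so $C(U_{N,\infty})$ admits irreducibles of every finite dimension and hence cannot coincide with any single $C(U_{N,K}^*)$. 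The inclusion $U_{N,\infty} \subset U_N^*$ is then the quantum group translation of Proposition \ref{prop:incluhalf}: the commutation of $\{u_{ij}u_{kl}^*\}$ and of $\{u_{ij}^*u_{kl}\}$ implies $ab^*c = cb^*a$ for $a,b,c \in \{u_{ij}\}$ by the same summation trick.

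The hard step, and the main obstacle, will be the strictness of this second inclusion, for which I would adapt the graded twisting argument from Proposition \ref{prop:incluinfty}. The plan is to form $C(O_N^*) * C(O_N^*)$ with two families of generators $(x_{ij})$ and $(y_{ij})$, let $\theta$ denote the involutive automorphism that exchanges them, and work inside the crossed product $C(O_N^*) * C(O_N^*) \rtimes \mathbb Z_2$. The central technical step is to verify that $u_{ij} \mapsto x_{ij} \otimes \theta$ and $u_{ij}^* \mapsto y_{ij} \otimes \theta$ extends to a well-defined $*$-algebra morphism $\rho : C(U_N^*) \to C(O_N^*) * C(O_N^*) \rtimes \mathbb Z_2$: one has to check $*$-compatibility (using $(x_{ij}\otimes\theta)^* = \theta(x_{ij}) \otimes \theta = y_{ij} \otimes \theta$), biunitarity of the image matrix (which reduces via $\theta^2 = 1$ to the orthogonality relations $\sum_j x_{ij}x_{kj} = \sum_j y_{ij}y_{kj} = \delta_{ik}$), and the half-commutation relations $ab^*c = cb^*a$ (which reduce to $x_{ij}x_{kl}x_{mn} = x_{mn}x_{kl}x_{ij}$ inside the half-commutative $C(O_N^*)$). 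Granting $\rho$, a direct computation yields
$$\rho(u_{ij} u_{kl}^* u_{i'j'}^* u_{k'l'}) = x_{ij}x_{kl} y_{i'j'} y_{k'l'} \otimes 1, \quad \rho(u_{i'j'}^* u_{k'l'} u_{ij} u_{kl}^*) = y_{i'j'} y_{k'l'} x_{ij} x_{kl} \otimes 1,$$
and these two elements are distinct in $C(O_N^*) * C(O_N^*)$ for generic choice of indices, by standard free product properties \cite{nsp}. This forbids the commutation $[u_{ij} u_{kl}^*, u_{i'j'}^* u_{k'l'}] = 0$ (which holds in $C(U_{N,\infty})$) from already holding in $C(U_N^*)$, so the canonical surjection $C(U_N^*) \twoheadrightarrow C(U_{N,\infty})$ has nontrivial kernel.
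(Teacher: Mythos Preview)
Your proposal is correct and follows exactly the approach the paper intends: the paper does not give a separate proof here, merely writing ``Similarly to Proposition~\ref{prop:incluinfty}'', and your argument is the natural quantum group translation of that sphere proof, including the irreducible-dimension count via Theorem~\ref{thm:UNK*} and the graded-twisting construction with $C(O_N^*)*C(O_N^*)\rtimes\mathbb Z_2$ replacing $C(S^{N-1}_{\mathbb R,*})*C(S^{N-1}_{\mathbb R,*})\rtimes\mathbb Z_2$. The verifications you outline for the morphism $\rho$ (the $*$-compatibility via $\theta(x_{ij})=y_{ij}$, biunitarity via orthogonality, and half-commutation via $abc=cba$ in $C(O_N^*)$) are exactly what is needed, and the free-product inequality goes through once one chooses indices with $x_{ij}x_{kl}\notin\mathbb C1$.
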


Let us now explain briefly that $U_{N, \infty}$ is an easy quantum group. For $k,l\geq 0$, 
let $P(k,l)$ be the set of partitions between an upper row of $k$ points, and a lower row of $l$ points, with each leg colored black or white, and with $k,l$ standing for the corresponding ``colored integers''. We have then the following notion:

\begin{definition}
A category of partitions is a collection of sets $D=\bigcup_{kl}D(k,l)$, with $D(k,l)\subset P(k,l)$, which contains the identity, and is stable under:
\begin{enumerate}
\item The horizontal concatenation operation $\otimes$.

\item The vertical concatenation $\circ$, after deleting closed strings in the middle.

\item The upside-down turning operation $*$ (with reversing of the colors).
\end{enumerate}
\end{definition}

Here the vertical concatenation operation assumes of course that the colors match. Regarding the identity, the precise condition is that $D(\circ,\circ)$ contains the ``white'' identity $|^{\hskip-1.3mm\circ}_{\hskip-1.3mm\circ}$\,. By using (3) we see that $D(\bullet,\bullet)$ contains the ``black'' identity $|^{\hskip-1.3mm\bullet}_{\hskip-1.3mm\bullet}$\,, and then by using (1) we see that each $D(k,k)$ contains its corresponding (colored) identity.

As explained in  \cite{tw2}, such categories produce quantum groups. To be more precise, associated to any partition $\pi\in P(k,l)$ is the following linear map:
$$T_\pi(e_{i_1}\otimes\ldots\otimes e_{i_k})=\sum_{j:\ker(^i_j)\leq\pi}e_{j_1}\otimes\ldots\otimes e_{j_l}$$

Here the kernel of a multi-index $(^i_j)=(^{i_1\ldots i_k}_{j_1\ldots j_l})$ is the partition obtained by joining the sets of equal indices. Thus, the condition $\ker(^i_j)\leq\pi$ simply tells us that the strings of $\pi$ must join equal indices. With this construction in hand, we have:

\begin{definition}
A compact quantum group $G\subset U_N^+$ is called easy when
$$Hom(u^{\otimes k},u^{\otimes l})=span\left(T_\pi\Big|\pi\in D(k,l)\right)$$
for any $k,l$, for a certain category of partitions $D\subset P$.
\end{definition}

In other words, the easiness condition states that the Schur-Weyl dual of $G$ comes in the ``simplest'' possible way: from partitions. As a basic example, according to an old result of Brauer \cite{bra, gv}, the group $G=U_N$ is easy, with $D=P_2$ being the category of ``color-matching'' pairings . Easy as well is $U_N^+$, with $D=NC_2\subset P_2$ being the category of noncrossing color-matching pairings. See \cite{bsp}, \cite{fre}, \cite{rwe}, \cite{tw2}.

With these notions in hand, here is now our main statement here:

\begin{theorem}
The unitary quantum group $U_{N,\infty}$ is easy, coming from the following bicolored partitions:
$$\xymatrix@R=12mm@C=5mm{\circ\ar@{-}[drr]&\bullet\ar@{-}[drr]&\bullet\ar@{-}[dll]&\circ\ar@{-}[dll]\\\bullet&\circ&\circ&\bullet} \quad \quad 
\xymatrix@R=12mm@C=5mm{\circ\ar@{-}[drr]&\bullet\ar@{-}[drr]&\circ\ar@{-}[dll]&\bullet\ar@{-}[dll]\\\circ&\bullet&\circ&\bullet}
$$
\end{theorem}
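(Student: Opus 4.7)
The proof would proceed via Woronowicz's Tannakian duality, exploiting the fact (recalled in the paragraph before the theorem) that $U_N^+$ itself is easy with category $\mathcal{NC}_2^{\circ\bullet}$. Consequently, in order to show that $U_{N,\infty}$ is easy, it suffices to exhibit a category of partitions $D \supset \mathcal{NC}_2^{\circ\bullet}$ such that the Hom spaces $\mathrm{span}(T_\pi\mid \pi\in D)$ are exactly the Hom spaces for $U_{N,\infty}$, equivalently, such that the relations $T_\pi \in \mathrm{Hom}(u^{\otimes k},u^{\otimes l})$ for $\pi\in D$ are generated (as a collection of algebraic relations on the $u_{ij}$'s) by the commutation rules in Definition 8.3.

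The plan is then to compute $T_{\pi_1}$ and $T_{\pi_2}$ for the two displayed partitions and convert the condition $T_{\pi_i}\in\mathrm{Hom}$ into explicit polynomial relations. Reading off the edges from the two diagrams, in both cases the underlying set-theoretic partition is the ``swap pairs'' partition $\{1,3'\},\{2,4'\},\{3,1'\},\{4,2'\}$, so in both cases $T_{\pi_i}$ sends $e_a\otimes e_b\otimes e_c\otimes e_d$ to $e_c\otimes e_d\otimes e_a\otimes e_b$; what differs is the coloring of the legs, and hence which colored tensor power of $u$ is intertwined. For $\pi_1$ (top $\circ\bullet\bullet\circ$, bottom $\bullet\circ\circ\bullet$) the condition $T_{\pi_1}\in\mathrm{Hom}$ translates into the commutation of $u_{ij}u_{kl}^*$ with $u_{mn}^*u_{pq}$, while for $\pi_2$ (top $\circ\bullet\circ\bullet$, bottom $\circ\bullet\circ\bullet$) it translates into the commutation of $u_{ij}u_{kl}^*$ with $u_{mn}u_{pq}^*$. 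Together, and after the standard trick of inserting $\sum_a u_{ab}^*u_{ab}=1$ to also deduce commutation of the $u^*u$ pairs among themselves, these are precisely the defining relations of $U_{N,\infty}$.

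Finally, defining $D$ as the category generated by $\mathcal{NC}_2^{\circ\bullet}$, $\pi_1$ and $\pi_2$ under the three operations $\otimes$, $\circ$ and $*$, I would verify that no essentially new relations are introduced beyond those for $U_N^+$ plus the two above. This is immediate in spirit: horizontal concatenation corresponds to tensoring Hom-space maps, vertical composition to composition, and the $*$ operation to the adjoint, and all of these preserve membership in $\mathrm{Hom}(u^{\otimes k},u^{\otimes l})$ for any fixed quantum group. Thus the easy quantum group associated to $D$ is exactly $U_{N,\infty}$, proving the theorem.

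The main obstacle is the bookkeeping in the middle paragraph: one must be scrupulous about the convention linking the coloring $\circ/\bullet$ to $u$ versus $\bar u$, about the orientation of the partition (upper vs.\ lower row), and about how the ``swap pairs'' edge pattern gets translated into an equality of products in the $u_{ij}$. Once these conventions are fixed, the verification is direct, but it is easy to end up producing the relations defining $U_{N,*}$ or $U_{N,**}$ instead of those of $U_{N,\infty}$ if the colorings are misread, so the color-by-color computation of $T_{\pi_1}$ and $T_{\pi_2}$ must be carried out explicitly.
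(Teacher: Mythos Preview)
Your overall strategy via Tannakian duality matches the paper's, and your identification of the relations is correct: $T_{\pi_1}$ encodes $[u_{ij}u_{kl}^*,\,u_{mn}^*u_{pq}]=0$ and $T_{\pi_2}$ encodes $[u_{ij}u_{kl}^*,\,u_{mn}u_{pq}^*]=0$. The forward direction (these maps are morphisms for $U_{N,\infty}$) is indeed immediate.

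The gap is in the converse, where you must show that any $G\subset U_N^+$ satisfying these two families of relations already satisfies \emph{all} the defining relations of $U_{N,\infty}$, in particular $[u_{ij}^*u_{kl},\,u_{mn}^*u_{pq}]=0$. Your ``standard trick of inserting $\sum_a u_{ab}^*u_{ab}=1$'' is not enough as stated: a direct insertion of the identity does not obviously move one $u^*u$ block past another using only the two commutations you have. The paper's argument for this step is sharper and goes through a three-term relation rather than a four-term one: in $u_{ij}u_{kl}^*u_{pq}u_{rs}^*= u_{pq}u_{rs}^*u_{ij}u_{kl}^*$ one sets $p=k$, sums over $k$ to collapse $\sum_k u_{kl}^*u_{kq}=\delta_{lq}$ on the left, then multiplies on the right by $u_{tl}$ and sums over $l$ to collapse $\sum_l u_{kl}^*u_{tl}=\delta_{kt}$ on the right. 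This yields $u_{ij}u_{rs}^*u_{tq}= u_{tq}u_{rs}^*u_{ij}$, the defining relation $ab^*c=cb^*a$ of $U_N^*$. Now Proposition~\ref{prop:incluhalf} (its forward implication) gives both $\{u_{ij}u_{kl}^*\}$ pairwise commuting and $\{u_{ij}^*u_{kl}\}$ pairwise commuting; combined with the mixed commutation from $\pi_1$, one obtains $G\subset U_{N,\infty}$. So the missing ingredient in your sketch is precisely this detour through the $U_N^*$ relation, which is where the real work in the converse direction lies.
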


\begin{proof}
 Denote by $U$ the Hilbert space $\mathbb C^N$, endowed with its canonical basis. The linear maps corresponding to the two above diagram respectively are:
$$U \otimes \bar{U} \otimes \bar{U} \otimes U \rightarrow \bar{U} \otimes U \otimes U \otimes \bar{U}, \ e_i \otimes \bar{e_j} \otimes \bar{e_k} \otimes e_l \mapsto \bar{e_k} \otimes e_l \otimes e_i \otimes \bar{e_j}$$
  $$U \otimes \bar{U} \otimes  U \otimes \bar{U} \rightarrow U \otimes \bar{U} \otimes U \otimes \bar{U},\ e_i \otimes \bar{e_j} \otimes e_k \otimes \bar{e_l} \mapsto e_k \otimes \bar{e_l} \otimes e_i \otimes \bar{e_j}$$
It follows from the defining relations in $U_{N, \infty}$ that these are morphisms in the representation category of $U_{N, \infty}$. Conversely,   if $G \subset U_N^+$ is quantum group such that above morphisms are morphisms in the representation category of $G$, we get the following relations in $C(G)$:
$u_{ij}u_{kl}^*u_{pq}^*u_{rs}= u_{pq}^*u_{rs}u_{ij}u_{kl}^*$ and $u_{ij}u_{kl}^*u_{pq}u_{rs}^*= u_{pq}u_{rs}^*u_{ij}u_{kl}^*$. The second relations give in particular $u_{ij}u_{kl}^*u_{kq}u_{rs}^*= u_{kq}u_{rs}^*u_{ij}u_{kl}^*$, and summing over $k$, this gives 
$\delta_{lq}u_{ij}u_{rs}^*= \sum_k u_{kq}u_{rs}^*u_{ij}u_{kl}^*$. Multiplying by $u_{tl}$ on the right and summing over $l$, this gives  $u_{ij}u_{rs}^*u_{tq}= u_{tq}u_{rs}^*u_{ij}$, the defining relation of $U_{N,*}$. We get from Proposition \ref{prop:incluhalf} that the defining relations of $U_{N, \infty}$ are satisfied, so that $G\subset U_{N, \infty}$.

The above discussion and Tannakian duality show that the representation category of $U_{N, \infty}$ is generated by the partitions in the statement, and hence is an easy quantum group. 
\end{proof}

Note as well that, as already said in the previous section, each quantum group $U_{N,K}$ is easy, coming from the following crossing diagram in $\mathcal P(2K,2K)$:

$$\xymatrix@R=12mm@C=5mm{\circ \ar@{-}[drrrrr]& \circ \ar@{-}[drrrrr] & \circ \ar@{-}[drrrrr]& \ldots & \circ \ar@{-}[drrrrr]& \circ\ar@{-}[dlllll]&\circ\ar@{-}[dlllll]&\circ\ar@{-}[dlllll] & \ldots & \circ\ar@{-}[dlllll] \\ \circ & \circ & \circ& \ldots & \circ & \circ&\circ&\circ& \ldots & \circ } 
$$

The embedding of Theorem \ref{thm:UNK*} easily enables one to show that $U_{N,K}$ is coamenable for finite $K$. So we have the following question:

\begin{question}
 Is the compact quantum group $U_{N,\infty}$ coamenable?
\end{question}

We believe that the answer is yes, but we have no proof.  We also think that $U_{N, \infty}$ could be a kind of ``largest coamenable version'' of $U_N$, but here  we have no precise conjectural statement.

\end{document}